\numberwithin{equation}{section}
\newcommand{\llb}{\llbracket}
\newcommand{\rrb}{\rrbracket}
\DeclareMathOperator{\diag}{diag}
\DeclareMathOperator{\End}{End}
\DeclareMathOperator{\Frac}{Frac}
\DeclareMathOperator{\Gal}{Gal}
\DeclareMathOperator{\GL}{GL}
\DeclareMathOperator{\Hom}{Hom}
\DeclareMathOperator{\Irr}{Irr}
\DeclareMathOperator{\nr}{nr}
\DeclareMathOperator{\ord}{ord}
\DeclareMathOperator{\res}{res}
\DeclareMathOperator{\Quot}{Quot}
\DeclareMathOperator{\Tr}{Tr}
\DeclareMathOperator{\tr}{tr}
\DeclareMathOperator{\Jac}{Jac}
\DeclareMathOperator{\Ext}{Ext}
\newcommand{\mm}{\mathfrak m}
\newcommand{\pp}{\mathfrak p}
\newcommand{\G}{\mathcal G}
\newcommand{\OO}{\mathcal O}
\newcommand{\Q}{\mathcal Q}
\newcommand{\QQ}{\mathbb Q}
\newcommand{\ZZ}{\mathbb Z}
\DeclareSymbolFont{bbold}{U}{bbold}{m}{n}
\DeclareSymbolFontAlphabet{\mathbbold}{bbold}
\newcommand{\LLambda}{\mathbbold \Lambda}
\newcommand{\GGamma}{\mathbbold \Gamma}
\renewcommand{\epsilon}{\varepsilon}
\renewcommand{\phi}{\varphi}
\renewcommand{\theta}{\vartheta}
\newcommand{\e}{\mathsf e}
\newcommand{\al}{\mathrm{c}}
\newcommand{\cent}{\mathfrak z}
\newcommand{\OF}{{\OO_F}}
\newcommand{\D}{\mathsf D}
\newcommand{\DD}{\mathfrak D}
\newcommand{\M}{\mathfrak M}
\newcommand{\F}{\mathcal F}
\newenvironment{psmallmatrix}
{\left(\begin{smallmatrix}}
	{\end{smallmatrix}\right)}
\newlist{theoremlist}{enumerate}{1}
\setlist[theoremlist]{label=(\roman{theoremlisti}), ref=\thetheorem.\roman{theoremlisti},noitemsep, topsep=.2ex}
\newlist{propositionlist}{enumerate}{1}
\setlist[propositionlist]{label={(\roman{propositionlisti})}, ref=\theproposition.\roman{propositionlisti},noitemsep, topsep=.2ex}
\newlist{conjecturelist}{enumerate}{1}
\setlist[conjecturelist]{label={(\roman{conjecturelisti})}, ref=\theconjecture.\roman{conjecturelisti},noitemsep, topsep=.2ex}
\newlist{lemmalist}{enumerate}{1}
\setlist[lemmalist]{label=(\roman{lemmalisti}), ref=\thelemma.\roman{lemmalisti},noitemsep, topsep=.2ex}
\newlist{definitionlist}{enumerate}{1}
\setlist[definitionlist]{label=(\roman{definitionlisti}), ref=\thelemma.\roman{definitionlisti},noitemsep, topsep=.2ex}
\theoremstyle{plain}
\newtheorem{theorem}{Theorem}[section]
\newtheorem{lemma}[theorem]{Lemma}
\newtheorem{proposition}[theorem]{Proposition}
\newtheorem{corollary}[theorem]{Corollary}
\newtheorem{conjecture}[theorem]{Conjecture}
\newtheorem*{theorem*}{Theorem}
\newtheorem*{conjecture*}{Conjecture}
\theoremstyle{definition}
\newtheorem{definition}[theorem]{Definition}
\newtheorem{notation}[theorem]{Notation}
\theoremstyle{remark}
\newtheorem{remark}[theorem]{Remark}
\newtheorem{example}[theorem]{Example}
\Crefname{lemma}{Lemma}{Lemmata}
\Crefname{claim}{Claim}{Claims}
\Crefname{proposition}{Proposition}{Propositions}
\Crefname{conjecture}{Conjecture}{Conjectures}
\Crefname{example}{Example}{Examples}
\crefname{page}{page}{pages}
\Crefname{condition}{Condition}{Conditions}
\Crefname{question}{Question}{Questions}
\Crefname{theoremlisti}{Theorem}{Theorems}
\Crefname{propositionlisti}{Proposition}{Propositions}
\Crefname{conjecturelisti}{Conjecture}{Conjectures}
\Crefname{lemmalisti}{Lemma}{Lemmata}
\Crefname{definitionlisti}{Definition}{Definitions}
\NewDocumentEnvironment{noproof}{m}{% #1 is the inner environment
  \par\pushQED{\qed}\UseName{#1}%
}{\popQED\UseName{end#1}}
\title{Graduated orders over completed group rings and conductor formul\ae}
\author{Ben Forrás}
\address{Universität der Bundeswehr\\
	INF 1 Institut für Theoretische Informatik, Mathematik und Operations Research \\
	Werner-Heisenberg-Weg 39\\
	85579 Neubiberg\\
	Germany}
\email{ben.forras@unibw.de}
\urladdr{https://bforras.eu}
\subjclass[2020]{16H10, 16H20, 11R23}
\keywords{graduated orders, central conductor, completed group algebras}
\date{Version of 2025-10-06}
\begin{document}

\begin{abstract}
We study graduated orders over completed group rings of {one}-dimensional admissible $p$-adic Lie groups, and verify the equivariant $p$-adic Artin conjecture for such orders. Following Jacobinski and Plesken, we obtain a formula for the conductor of a graduated order into a self-dual order. We also refine Nickel's central conductor formula by determining a hitherto implicit exponent $r_\chi$.
\end{abstract}

\maketitle

\section{Introduction}
\subsection{Overview of conductor formul\ae}

For a ring extension $R\subseteq R'$, the \emph{left resp. right conductors} are defined as
    \[(R':R)_\ell \colonequals\{x\in R' : xR'\subseteq R\} \quad\text{resp.}\quad (R':R)_r\colonequals\{x\in R': R'x\subseteq R\}.\]
The initial interest in conductors stemmed from the fact that they can be used to bound certain $\Ext^1$-groups, see \cite{Jacobinski} and \cite[\S29]{CR}. Another, more recent application is in the study of Fitting invariants, see \cite[\S6]{ncfitt2}.

In the case of a maximal order {containing} a group ring, the left/right conductor {is independent of the choice of this maximal order, and it} has been determined by Jacobinski \cite{Jacobinski}:
\begin{theorem}[Jacobinski, {\cite[Theorem~27.8]{CR}}] \label{thm:Jacobinski-conductor}
    Let $H$ be a finite group, and let $R$ be a Dedekind domain with field of fractions $L=\Frac(R)$ {such that the characteristic of $L$ is coprime to $\#H$}. Let $\GGamma$ be a maximal $R$-order in the group algebra $L[H]$ containing $R[H]$. Then
    \[(\GGamma:R[H])_\ell = (\GGamma:R[H])_r = \bigoplus_{\eta\in\Irr(H)/\sim_L} \left(\frac{\#H}{ \eta(1) }\right) \D(\GGamma \epsilon(\eta) / R).\]
\end{theorem}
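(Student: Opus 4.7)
My plan is to identify the conductor with a dual lattice under the regular trace form $\Tr \colonequals \Tr_{L[H]/L}$. Since $\Tr(\sum_g a_g g) = \#H \cdot a_1$, the coefficient of $g$ in $x \in L[H]$ is $(\#H)^{-1}\Tr(xg^{-1})$. Setting $M^\ast \colonequals \{x \in L[H] : \Tr(xM) \subseteq R\}$ for any full $R$-lattice $M$, this coefficient formula yields the near self-duality $R[H]^\ast = (\#H)^{-1} R[H]$.

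Next, I would translate the conductor condition into a trace condition. For $x \in \GGamma$ and $\gamma \in \GGamma$, the element $x\gamma$ lies in $R[H]$ iff each coefficient $(\#H)^{-1}\Tr(x\gamma g^{-1})$ lies in $R$; running over all $\gamma \in \GGamma$ and $g \in H$ (and using $H \subseteq \GGamma$), the condition $x\GGamma \subseteq R[H]$ becomes $\Tr(x\GGamma) \subseteq \#H \cdot R$, i.e.\ $x \in \#H \cdot \GGamma^\ast$. Dualising the inclusion $R[H] \subseteq \GGamma$ gives $\GGamma^\ast \subseteq R[H]^\ast = (\#H)^{-1} R[H]$, so $\#H\cdot\GGamma^\ast\subseteq R[H]\subseteq\GGamma$ and the constraint $x\in\GGamma$ is automatic. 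Thus $(\GGamma:R[H])_\ell = \#H \cdot \GGamma^\ast$.

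To match the stated form I would decompose via Wedderburn, $L[H] = \prod_\eta A_\eta$ with $A_\eta = L[H]\epsilon(\eta)$ simple of centre $Z_\eta$. The factors $A_\eta$ are mutually orthogonal under $\Tr$, so $\GGamma^\ast = \bigoplus_\eta (\GGamma\epsilon(\eta))^\ast$. To pass from the regular trace on $A_\eta$ to the reduced trace I would use that the reduced degree of $A_\eta$ over $Z_\eta$ equals $\eta(1)$; this gives $\Tr_{A_\eta/L} = \eta(1) \cdot \Tr_{Z_\eta/L} \circ \tr^{\mathrm{red}}_{A_\eta/Z_\eta}$ and therefore $(\GGamma\epsilon(\eta))^\ast = \eta(1)^{-1} \D(\GGamma\epsilon(\eta)/R)$, the inverse different being defined via $\Tr_{Z_\eta/L} \circ \tr^{\mathrm{red}}$. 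Multiplying through by $\#H$ produces the coefficient $\#H/\eta(1)$ on each summand.

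The coincidence with the right conductor is automatic from the symmetry $\Tr(xy) = \Tr(yx)$: the identical argument with products read from the opposite side identifies $(\GGamma:R[H])_r$ with the same lattice. The principal bookkeeping obstacle I anticipate is the rescaling factor $\eta(1)$ relating the regular and reduced traces on each Wedderburn component; beyond this, the proof reduces to standard trace-duality for $R$-lattices over a Dedekind domain.
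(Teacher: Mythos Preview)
Your argument is correct and is precisely the classical proof recorded in Curtis--Reiner. Note, however, that the paper does not supply its own proof of this theorem: it is quoted as background from \cite[Theorem~27.8]{CR}, so there is nothing in the paper to compare against directly.

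That said, the paper's generalisation (\cref{selfdual-conductors}) is built on exactly the same mechanism you identified, only packaged more abstractly. Your key computation $R[H]^\ast=(\#H)^{-1}R[H]$ is (after absorbing the scalar) the statement that the group ring is self-dual, and your identification $(\GGamma:R[H])_\ell=\#H\cdot\GGamma^\ast$ is the specialisation of the paper's identity $(\GGamma:\LLambda)_\ell=\D(\GGamma/R)$ for self-dual $\LLambda$. The paper phrases this as ``the conductor is the largest left $\GGamma$-lattice in $\LLambda$, so its trace-dual is the smallest right $\GGamma$-lattice containing $\D(\LLambda/R)=\LLambda$, namely $\GGamma$ itself, whence the conductor is the double dual $\D(\GGamma/R)$''. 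Your explicit coefficient extraction via $\Tr(xg^{-1})$ is the concrete realisation of this duality for the specific lattice $R[H]$. The decomposition step and the rescaling $\Tr_{A_\eta/L}=\eta(1)\,\tr_{A_\eta/L}$ are handled in the paper by \cref{different-direct-sum} together with \cref{lem:double-duals}, again matching your outline. So your proposal is correct and in the same spirit as both the cited source and the paper's own generalised framework.
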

Here $\eta$ runs through irreducible characters of $H$ up to Galois equivalence: $\eta\sim_L\eta'$ if there exists $\sigma\in\Gal(L(\eta)/L)$ such that $\sigma\circ \eta=\eta'$, where $L(\eta)=L(\eta(h):h\in H)$. The idempotent $\epsilon(\eta)$ is defined in \eqref{eq:epsilon-idempotents}, and $\D$ denotes the inverse different (\cref{sec:inverse-different}).

While this result is usually cited only for maximal orders, it also holds for hereditary orders \cite[Satz~10.7]{JacobinskiEssen}.
A further generalisation is due to Plesken \cite[Theorem~III.8]{Plesken}, who replaced $\GGamma$ by a graduated order and $R[H]$ by a self-dual order; see \cref{def:graduated,def:self-dual} for these notions.
Jacobinski's result has been adapted to completed group rings by Nickel {as follows.}

{From now on, let $p$ always denote an odd rational prime.} {By a one-dimensional admissible $p$-adic Lie group we mean a one-dimensional $p$-adic Lie group $\G$ containing a finite normal subgroup $H$ such that $\G/H\simeq \ZZ_p$; then $\G=H\rtimes \Gamma$ where $\Gamma\simeq\ZZ_p$, see \cite[551]{TEIT-II}.}
\begin{theorem}[{\cite[Theorem~2.8]{NickelConductor}}] \label{thm:Nickel-conductor}
    Let $\G=H\rtimes\Gamma$ be a $p$-adic Lie group of dimension $1$, where $H$ is a finite group and $\Gamma\simeq\ZZ_p$. Let $\Gamma_0$ be a central subgroup of $\G$ isomorphic to $\ZZ_p$. Let $F/\QQ_p$ be a finite extension, $\Lambda^{\OO_F}(\G)\colonequals \OO_F\llb \G\rrb$ a completed group ring, and $\Q^F(\G)\colonequals\Quot(\Lambda^{\OO_F}(\G))$ its total ring of quotients.
    Let $\M$ be a maximal $\Lambda^{\OO_F}(\Gamma_0)$-order in $\Q^F(\G)$ containing $\Lambda^{\OO_F}(\G)$. Then 
    \[\left(\M:\Lambda^{\OO_F}(\G)\right)_\ell = \left(\M:\Lambda^{\OO_F}(\G)\right)_r = \bigoplus_{\chi\in\Irr(\G)/\sim_F} \left(\frac{\#H}{\chi(1)}\right) \D_\chi(\M).\]
\end{theorem}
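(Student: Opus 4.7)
The plan is to reduce Nickel's formula to Plesken's generalisation of \cref{thm:Jacobinski-conductor} by localising at the height-one primes of the central subring $R\colonequals \Lambda^{\OO_F}(\Gamma_0)$. After fixing a topological generator of $\Gamma_0$, the ring $R$ is isomorphic to $\OO_F\llb T\rrb$, a two-dimensional regular complete local ring; the ring $\Lambda^{\OO_F}(\G)$ is a module-finite $R$-algebra whose generic fibre $\Q^F(\G)$ is a semisimple $\Frac(R)$-algebra, with Wedderburn decomposition indexed by $\Irr(\G)/\sim_F$ and central primitive idempotents $\epsilon(\chi)$.

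First, I would show that at every height-one prime $\pp$ of $R$, the localisation $\Lambda^{\OO_F}(\G)_\pp$ is a self-dual $R_\pp$-order in $\Q^F(\G)$, using the natural symmetrising form given by projection to the identity component (the Iwasawa-algebra analogue of the group-ring trace form). Granted self-duality, Plesken's theorem applied over the DVR $R_\pp$ yields
\[\bigl(\M_\pp:\Lambda^{\OO_F}(\G)_\pp\bigr)_\ell = \bigl(\M_\pp:\Lambda^{\OO_F}(\G)_\pp\bigr)_r = \bigoplus_{\chi} \left(\frac{\#H}{\chi(1)}\right) \D\bigl(\M_\pp\epsilon(\chi)/R_\pp\bigr).\]

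Second, I would reassemble these local formulae into the global one. Since $\M$ is reflexive as an $R$-module (being a maximal $R$-order over the regular ring $R$) and $R$ is a two-dimensional Krull domain, every full $R$-sublattice of $\M$ equals the intersection of its height-one localisations in $\Q^F(\G)$. The symbols $\D_\chi(\M)$ are defined so as to localise at each $\pp$ to $\D(\M_\pp\epsilon(\chi)/R_\pp)$, whence intersecting the local identities over all height-one primes produces the claimed global formula, and the equality of left and right conductors is inherited from the local identities.

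The main obstacle is the self-duality verification in the first step: unlike a classical group ring of a finite group, $\Lambda^{\OO_F}(\G)$ is an infinite-dimensional Iwasawa algebra, and one must carefully construct the symmetrising form on $\Lambda^{\OO_F}(\G)$ and check that it remains non-degenerate after localising at any height-one prime of $R$. The semidirect product structure $\G=H\rtimes \Gamma$ should reduce this to a twisted version of the finite group-ring case via the intermediate subgroup $\G_0\colonequals H \times \Gamma_0$ of finite index in $\G$, but the bookkeeping of the twist — and tracking how $F$-Galois orbits of $\Irr(\G)$ line up with Wedderburn factors after passing to $\Frac(R_\pp)$ — is the delicate point.
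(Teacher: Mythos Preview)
Your outline is essentially Nickel's original strategy, and it is correct. Note, however, that the paper does not give a dedicated proof of \cref{thm:Nickel-conductor}: it is quoted from \cite{NickelConductor} as background. What the paper does instead is develop, in \S\ref{sec:graduated}, a global argument over the two-dimensional base $R=\Lambda^{\OO_F}(\Gamma_0)$ that recovers Nickel's result as a special case and generalises it to graduated overorders (\cref{selfdual-conductors}).

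The comparison is this. You localise at each height-one prime $\pp$ of $R$, invoke Plesken's one-dimensional conductor formula over the DVR $R_\pp$, and then reassemble using reflexivity of $\M$ and of the conductor. The paper avoids the first step: it argues directly that for a self-dual $\LLambda$ and any overorder $\GGamma$ one has $\D((\GGamma:\LLambda)_r/R)=\D(\LLambda/R)\,\GGamma=\LLambda\,\GGamma=\GGamma$, and then uses reflexivity of the conductor (\cref{lem:conductor-reflexive}, whose proof \emph{does} pass through height-one localisation) together with the identification of the double dual (\cref{lem:double-duals}) to conclude $(\GGamma:\LLambda)_r=\D(\GGamma/R)$. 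The Wedderburn decomposition is then applied once at the end (\cref{different-direct-sum}). Your approach buys modularity---you only need the one-dimensional Plesken result as a black box---while the paper's approach is more self-contained and immediately covers graduated, not just maximal, $\GGamma$.

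On your stated obstacle: self-duality of $\Lambda^{\OO_F}(\G)$ is indeed the input one needs, and it is not re-verified here; the paper (like Nickel) takes it as established in \cite{NickelConductor}. Your idea of reducing via the finite-index subgroup $H\times\Gamma_0$ to a twisted group-ring computation is exactly how that verification goes.
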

The relevant notation shall be recalled in \cref{sec:maximal}. 
We remark that it doesn't make sense to consider this question for hereditary orders: as we will show in \cref{no-hereditary-orders}, there \emph{are} no hereditary orders in this setting (see also \cref{sec:hereditary}).

One also considers the central conductor, which is defined as follows.
Let $R$ be a noetherian integrally closed integral domain with field of fractions $L=\Frac(R)$, and let $\mathscr A$ be a separable $L$-algebra with centre $\cent(\mathscr A)\supseteq L$. Let $R^{\mathrm{int}}_{\cent(\mathscr A)}$ denote the integral closure of $R$ in $\cent(\mathscr A)$. Let $\LLambda\subseteq \mathscr \GGamma\subset \mathscr A$ be two $R$-orders. The \emph{central conductor} of $\GGamma$ into $\LLambda$ is defined as
\begin{equation} \label{eq:def-central-conductor}
    \F(\GGamma/\LLambda) \colonequals R^{\mathrm{int}}_{\cent(\mathscr A)} \cap (\GGamma:\LLambda)_\ell = R^{\mathrm{int}}_{\cent(\mathscr A)} \cap (\GGamma:\LLambda)_r = \left\{x\in R^{\mathrm{int}}_{\cent(\mathscr A)} : x \GGamma\subseteq \LLambda \right\}.
\end{equation}
\begin{theorem}[Jacobinski, {\cite[Theorem~27.13(a)]{CR}}] \label{thm:Jacobinski-central-conductor}
    In the setup of \cref{thm:Jacobinski-conductor}, one has
    \[\F(\GGamma / R[H]) = \bigoplus_{\eta\in\Irr(H)/\sim_K} \left(\frac{\#H}{\eta(1)}\right) \D\left(R^{\mathrm{int}}_{\cent(\mathscr A)}\epsilon(\eta) \big/ R\right);\]
    in particular, the central conductor does not depend on the choice of the maximal order $\GGamma$.
\end{theorem}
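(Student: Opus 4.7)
The plan is to deduce this central formula from \cref{thm:Jacobinski-conductor} by intersecting the two-sided conductor with the integral closure $R^{\mathrm{int}}_{\cent(\mathscr A)}$, where $\mathscr A = L[H]$. The Wedderburn decomposition $\mathscr A = \bigoplus_{\eta} L[H]\epsilon(\eta)$ along the Galois orbits $\eta\in\Irr(H)/\sim_L$ induces the corresponding decomposition of the centre,
\[
\cent(\mathscr A) \;=\; \bigoplus_{\eta\in\Irr(H)/\sim_L} L(\eta)\,\epsilon(\eta), \qquad R^{\mathrm{int}}_{\cent(\mathscr A)} \;=\; \bigoplus_{\eta\in\Irr(H)/\sim_L} R^{\mathrm{int}}_{L(\eta)}\,\epsilon(\eta).
\]
Because the formula in \cref{thm:Jacobinski-conductor} respects this product decomposition, I would intersect term by term with $R^{\mathrm{int}}_{L(\eta)}\epsilon(\eta)$ to obtain
\[
\F(\GGamma/R[H]) \;=\; \bigoplus_{\eta\in\Irr(H)/\sim_L} \Bigl(\frac{\#H}{\eta(1)}\Bigr)\Bigl(\D(\GGamma\epsilon(\eta)/R) \,\cap\, R^{\mathrm{int}}_{L(\eta)}\,\epsilon(\eta)\Bigr).
\]

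The crucial step is then the purely local/arithmetic identification
\[
\D(\GGamma\epsilon(\eta)/R) \,\cap\, R^{\mathrm{int}}_{L(\eta)}\,\epsilon(\eta) \;=\; \D\bigl(R^{\mathrm{int}}_{L(\eta)}\,\epsilon(\eta)\big/R\bigr).
\]
This is a classical fact about maximal orders in central simple algebras: using the factorisation of the regular trace of the simple factor as $[L(\eta):L]$-many copies of the reduced trace composed with $\tr_{L(\eta)/L}$, and the theorem that the image of a maximal order under the reduced trace equals the full integral closure $R^{\mathrm{int}}_{L(\eta)}$, an element $x\in L(\eta)\epsilon(\eta)$ has $\tr_{\mathscr A/L}(x\GGamma\epsilon(\eta))\subseteq R$ if and only if $\tr_{L(\eta)/L}(x\,R^{\mathrm{int}}_{L(\eta)})\subseteq R$. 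I would localise at each prime of $R$ to carry out this computation, invoking standard results on the reduced trace of a maximal order from \cite[\S25]{CR} (or Reiner's \emph{Maximal Orders}).

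Assembling the pieces yields the stated formula. As a bonus, the right-hand side now involves only $R^{\mathrm{int}}_{\cent(\mathscr A)}$ and the characters $\eta$ — no vestige of $\GGamma$ remains — so the independence of the central conductor from the chosen maximal order $\GGamma$ containing $R[H]$ follows immediately. The main obstacle I anticipate is precisely the normalisation question in the key local step: carefully matching the conventions for the inverse different of a CSA with those for the inverse different of the centre, so that no spurious index $[L(\eta):L]$ or $\eta(1)$ is introduced or lost.
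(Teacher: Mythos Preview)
The paper does not give its own proof of this statement: \cref{thm:Jacobinski-central-conductor} is quoted in the introduction as a classical result of Jacobinski, with a bare citation to \cite[Theorem~27.13(a)]{CR} and no further argument. So there is nothing in the paper to compare your proposal against.

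That said, your sketch is essentially the classical proof one finds in the cited reference: intersect the conductor formula of \cref{thm:Jacobinski-conductor} componentwise with the integral closure of $R$ in the centre, and use that the reduced trace maps a maximal order onto the ring of integers of the centre. One small point of care: your phrase ``the regular trace of the simple factor as $[L(\eta):L]$-many copies of the reduced trace'' is garbled. In the paper's conventions (see \cref{sec:inverse-different}) the relevant map is already the \emph{reduced} trace, and it factors simply as $\tr_{\mathscr A\epsilon(\eta)/L}=\tr_{L(\eta)/L}\circ\tr_{\mathscr A\epsilon(\eta)/L(\eta)}$; no multiplicity $[L(\eta):L]$ enters. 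With that correction the argument goes through, and your anticipated ``normalisation obstacle'' disappears.
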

For completed group rings, in the setup of \cref{thm:Nickel-conductor}, Nickel showed \cite[Theorem~3.5]{NickelConductor} that the central conductor $\F\left(\M / \Lambda^{\OO_F}(\G)\right)$ doesn't depend on the choice of the maximal order $\mathfrak M$, and there is an inclusion
\begin{equation} \label{eq:Nickel-central-conductor}
    \F\left(\M / \Lambda^{\OO_F}(\G)\right) \supseteq  \bigoplus_{\chi\in\Irr(\G)/\sim_F} \left(\frac{\#H w_\chi}{\chi(1)}\right) \D(\OO_{F_\chi}/\OO_F) \Lambda^{\OO_{F_\chi}}(\Gamma'_\chi).
\end{equation}
Here $w_\chi$ is the index of the stabliliser of an irreducible constituent of $\res^\G_H\chi$, 
$\OO_{F_\chi}$ is the ring of integers in $F_\chi\colonequals F(\chi(h):h\in H)$,
and $\Lambda^{\OO_{F_\chi}}(\Gamma'_\chi)=\OO_{F_\chi}\llb\Gamma'_\chi\rrb$ is the Iwasawa algebra over a certain group $\Gamma'_\chi\simeq \ZZ_p$ defined by Nickel.

The containment \eqref{eq:Nickel-central-conductor} is an equality whenever (i) $\G\simeq H\times \Gamma$ is a direct product, or (ii) the Schur index $s_\chi$ is not divisible by $p$. 
For pro-$p$ groups $\G$ (that is, in the case of $p$-groups $H$) and $F=\QQ_p$, Nickel used Lau's description \cite{Lau} of the Wedderburn decomposition of $\Q(\G)$ to compute the factor missing from the right hand side, making the containment an equality. 
We will determine the missing factor for general $\G$ in \cref{sec:central-conductor}; see \cref{rem:comparison} for a comparison with Nickel's results.

\subsection{New results}
The goal of this article is to extend some of the results above. 
In \cref{sec:graduated}, we consider graduated orders beyond the well-studied case of Dedekind domains, which we shall refer to as `the {one}-dimensional case'. Modifying Plesken's arguments appropriately, we obtain the following conductor formula (the setup and notation shall be explained in \cref{sec:graduated-higher-dim}):
\begin{theorem}[{\cref{general-conductor-formula}}] \label{thm:cf-graduated}
    Let {$R$ be a regular local ring of dimension at most two with field of fractions $L$, and let $\mathscr A$ be a separable $L$-algebra.}
    Let $\LLambda\subset \mathscr A$ be a self-dual $R$-order, and let $\LLambda\subseteq \GGamma\subset \mathscr A$ be a graduated $R$-overorder of $\LLambda$. Then
    \[(\GGamma:\LLambda)_r = (\GGamma:\LLambda)_\ell = \D(\GGamma/R) \simeq \bigoplus_{\chi \in \mathscr X} \LLambda\left(\mathbf n_\chi, \D(\Omega_\chi/R)\mathbf E_\chi * \mathbf I_\chi^{-,\top}\right).\]
    In particular, the conductor is independent of $\LLambda$.
\end{theorem}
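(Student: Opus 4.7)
The strategy is to follow Plesken's original proof \cite[Theorem~III.8]{Plesken}, adapting it to base rings beyond Dedekind domains. The argument splits into two parts: reducing the conductor to the inverse different via self-duality, and then computing the inverse different explicitly using the graduated structure.

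For the reduction, the key input is that the reduced trace pairing on the separable algebra $\mathscr A$ is non-degenerate, so that for any full lattice $M$ the dual $\{x\in\mathscr A:\Tr(xM)\subseteq R\}$ coincides with $\D(M/R)$. Self-duality of $\LLambda$ means precisely $\LLambda=\D(\LLambda/R)$, whence
\[(\GGamma:\LLambda)_\ell=\{x:x\GGamma\subseteq\D(\LLambda/R)\}=\{x:\Tr(x\GGamma\LLambda)\subseteq R\}.\]
Since $\LLambda\subseteq\GGamma$ and both contain $1$, we have $\GGamma\LLambda=\GGamma$, and this simplifies to $\{x:\Tr(x\GGamma)\subseteq R\}=\D(\GGamma/R)$. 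The same argument applies on the right by symmetry of the trace pairing, and this already yields the claimed independence from $\LLambda$.

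For the explicit formula, I would exploit that $\GGamma$ decomposes along the Wedderburn factors of $\mathscr A$ as $\bigoplus_{\chi\in\mathscr X}\LLambda(\mathbf n_\chi,\mathbf I_\chi)$, with the summands orthogonal under reduced trace. Hence $\D(\GGamma/R)$ splits as a corresponding direct sum, and it remains to identify $\D(\LLambda(\mathbf n_\chi,\mathbf I_\chi)/R)$ with $\LLambda(\mathbf n_\chi,\D(\Omega_\chi/R)\mathbf E_\chi*\mathbf I_\chi^{-,\top})$. The reduced trace on a matrix-like order is governed by its diagonal entries, and a direct computation shows that dualising transposes the block pattern while inverting the parametrising ideals; the extra factor $\D(\Omega_\chi/R)$ accounts for the inverse different of the integrally closed centre $\Omega_\chi$ of the $\chi$-component.

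The main technical obstacle is the passage from Dedekind to higher-dimensional base rings, where fractional ideals need not be invertible and one cannot manipulate them as freely as in Plesken's setting. I would circumvent this by observing that both sides of the desired equality are reflexive $R$-modules, being formed by $R$-duality from full lattices, and using that over a noetherian integrally closed domain every reflexive module equals the intersection of its localisations at height-one primes. Localising at such a prime reduces everything to the case of a discrete valuation ring, where Plesken's entrywise computation of the dual of $\LLambda(\mathbf n_\chi,\mathbf I_\chi)$ applies verbatim. A secondary subtlety — that $\mathscr A$ is separable only over a subfield of $L$, so $R$ itself may fail to be integrally closed in $\cent(\mathscr A)$ — is absorbed naturally into the appearance of $\Omega_\chi$ rather than $R$ on the right-hand side.
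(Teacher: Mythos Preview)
Your proposal is correct, and it is interesting to note that you and the paper distribute the work differently between the two halves of the argument. For the identity $(\GGamma:\LLambda)_\ell=(\GGamma:\LLambda)_r=\D(\GGamma/R)$, your approach is more direct than the paper's: you simply unravel the definitions and use $\GGamma\LLambda=\GGamma$, whereas the paper first establishes $\D((\GGamma:\LLambda)_r/R)=\D(\LLambda/R)\GGamma=\GGamma$, then applies a second dual, and must invoke a separate lemma that the conductor is reflexive (itself proved by localisation at height-one primes) to identify $(\GGamma:\LLambda)_r$ with its double dual. For the explicit computation of $\D(\LLambda(\mathbf n_\chi,\mathbf I_\chi)/R)$ the situation is reversed: the paper gives two direct arguments---one via the description of two-sided fractional ideals of a graduated order, and one by checking both inclusions through reduced traces on elementary matrices---neither of which needs any localisation, while you reduce to the DVR case via reflexivity and intersection over height-one primes. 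Both routes are valid; yours lets you quote Plesken's computation verbatim at the cost of verifying that the right-hand side is reflexive (which does require the invertibility hypothesis on the $I_{ij}$ that is implicit in the notation $\mathbf I_\chi^{-,\top}$), whereas the paper's direct computation is self-contained but has to redo the matrix manipulation in the new setting.
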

{In a recent work of the author \cite{W}, the Wedderburn decomposition of the total ring of quotients of the completed group ring over a {one}-dimensional admissible $p$-adic Lie group is described. In particular, this shows that the algebras $\Q^F(\G)$ {satisfy the hypotheses} of \cref{thm:cf-graduated}.}
In \cref{sec:maximal}, we reformulate Nickel's work in the language of \cite{W}, which essentially means replacing the profinite group $\Gamma'_\chi$ by another profinite group $\Gamma''_\chi$ (both groups are abstractly isomorphic to $\ZZ_p$). The description of the Wedderburn decomposition of $\Q^F(\G)$ then leads to a more explicit formula for the central conductor:
\begin{theorem}[{\S\S\ref{sec:conductor-formula}--\ref{sec:central-conductor}}] \label{thm:cf-max}
    Nickel's conductor formula {(\cref{thm:Nickel-conductor})} and central conductor formula {\eqref{eq:Nickel-central-conductor}} remain valid under replacing $\Gamma'_\chi$ with $\Gamma''_\chi$. Moreover, the central conductor is
    \[\F\left(\M / \Lambda^{\OO_F}(\G)\right) = \bigoplus_{\chi\in\Irr(\G)/\sim_F} \left(\frac{\#H w_\chi}{\chi(1)}\right) \D(\OO_{F_\chi}/\OO_F) \left(\pp_\chi'\right)^{r_\chi},\]
    where $\pp'_\chi$ is the height $1$ prime ideal of the local ring $\Lambda^{\OO_{F_\chi}}\llb\Gamma'_\chi\rrb$ above $p$, and the exponent $r_\chi$ is determined explicitly.
\end{theorem}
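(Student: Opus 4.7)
My plan is to derive \cref{thm:cf-max} from the graduated-order conductor formula \cref{thm:cf-graduated} combined with the Wedderburn decomposition of $\Q^F(\G)$ established in \cite{W}. The key structural fact is that each Wedderburn factor of $\Q^F(\G)$ is a matrix ring over a skew field whose centre is canonically identified with an Iwasawa algebra over the group $\Gamma''_\chi$ singled out in \cite{W}.

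For the first claim, that Nickel's formulas remain valid after replacing $\Gamma'_\chi$ by $\Gamma''_\chi$, I would re-inspect Nickel's proof of \cref{thm:Nickel-conductor} and verify that $\Gamma'_\chi$ appears there only in its role as a procyclic central subgroup whose Iwasawa algebra realises the centre of the $\chi$-component of a maximal order. The group $\Gamma''_\chi$ from \cite{W} is characterised abstractly as $\ZZ_p$ and plays exactly the same role in the Wedderburn description; the substitution therefore amounts to a reindexing of the same intrinsic data.

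For the explicit central conductor formula, I would first verify that $\Lambda^{\OO_F}(\G)$ is a self-dual $\Lambda^{\OO_F}(\Gamma_0)$-order (this is standard and will be recalled in \cref{sec:maximal}) and that $\M$ is a graduated $\Lambda^{\OO_F}(\Gamma_0)$-order, which follows from the block form of the Wedderburn decomposition of \cite{W}. Applying \cref{thm:cf-graduated} yields a conductor which, upon intersecting with the centre of $\Q^F(\G)$ via \eqref{eq:def-central-conductor}, decomposes as a direct sum of $\chi$-components. Each component factors as the character-theoretic term $\#H w_\chi / \chi(1)$, the relative inverse different $\D(\OO_{F_\chi}/\OO_F)$ from the extension of coefficient rings --- these recover the explicit factors already present in Nickel's lower bound \eqref{eq:Nickel-central-conductor} --- and a power of the height-$1$ prime $\pp'_\chi$ arising from the inverse different of the Iwasawa-algebra extension $\Lambda^{\OO_{F_\chi}}(\Gamma''_\chi) \subseteq \Lambda^{\OO_{F_\chi}}(\Gamma'_\chi)$ localised at $\pp'_\chi$. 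The exponent $r_\chi$ is then the $\pp'_\chi$-valuation of this different.

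The main obstacle will be producing a closed-form expression for $r_\chi$ in precisely the cases where Nickel's containment fails to be an equality: when $p$ divides the Schur index $s_\chi$ and $\G$ is not a direct product. In these cases the inclusion $\Lambda^{\OO_{F_\chi}}(\Gamma''_\chi) \subseteq \Lambda^{\OO_{F_\chi}}(\Gamma'_\chi)$ is a nontrivial ramified extension of complete discrete valuation rings at $\pp'_\chi$, and identifying its ramification index in terms of invariants of $\chi$ requires a careful analysis of the numerical data attached to each Wedderburn factor in \cite{W}. Tracking these invariants through the formula of \cref{thm:cf-graduated} is where the bulk of the technical work will lie.
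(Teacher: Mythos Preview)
Your proposal contains a fundamental misidentification of where the exponent $r_\chi$ comes from. You write that $r_\chi$ arises ``from the inverse different of the Iwasawa-algebra extension $\Lambda^{\OO_{F_\chi}}(\Gamma''_\chi) \subseteq \Lambda^{\OO_{F_\chi}}(\Gamma'_\chi)$'' and that this is ``a nontrivial ramified extension of complete discrete valuation rings at $\pp'_\chi$''. This is incorrect: the rings $\Lambda^{\OO_{F_\chi}}(\Gamma'_\chi)$ and $\Lambda^{\OO_{F_\chi}}(\Gamma''_\chi)$ are \emph{isomorphic}, not one a nontrivial extension of the other. Both are descriptions of the centre $\cent(\M\epsilon_\chi)$ of the same maximal order, viewed on the two sides of the Wedderburn isomorphism; the paper makes this explicit in \cref{sec:conductor-formula} by exhibiting an isomorphism $\phi_\chi$ carrying one to the other. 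So there is no different to extract from comparing $\Gamma'_\chi$ with $\Gamma''_\chi$, and your proposed mechanism for producing $r_\chi$ collapses.

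The actual source of $r_\chi$ is the quantity $d'_\chi = \Q^{F_\chi}(\Gamma'_\chi)\cap \D(\M\epsilon_\chi/\Lambda^{\OO_{F_\chi}}(\Gamma'_\chi))$ already isolated by Nickel: it is the central part of the inverse different of the maximal order $\Omega_\chi$ in the skew field $D_\chi$ over its own centre. The paper computes this not via \cref{thm:cf-graduated} but by a direct reduced-trace calculation: using the explicit cyclic-algebra presentation of $D_\chi$ and the splitting map $\Phi$ from \cite{W}, one evaluates $\tr_{D_\chi/\cent(D_\chi)}$ on $\Omega_\chi$ and finds that, after localising at $\pp''_\chi$, the image is governed by the field trace $\Tr_{W/F_\chi}$. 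This yields $r_\chi = -\lfloor v_W(\DD(\OO_W/\OO_{F_\chi}))/e(F(\eta)/F_\chi)\rfloor$. Your plan to route everything through \cref{thm:cf-graduated} is not how the paper proceeds and is in any case not needed here, since $\M$ is already maximal and Nickel's formula already isolates $d'_\chi$ as the unknown factor; the substantive new input is the explicit description of $\Omega_\chi$ from \cref{thm:Wedderburn}, which makes the trace computation tractable.
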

In particular, this also leads to a slight strengthening of Nickel's results on annihilation of Ext-groups and Fitting ideals derived in \cite[\S4]{NickelConductor}.
Finally, in \cref{sec:epac}, we will show that the equivariant $p$-adic Artin conjecture introduced in \cite{EpAC} holds for graduated orders under the assumption of the equivariant Iwasawa main conjecture.

\subsection{Notation and terminology}
The word \emph{ring} means not necessarily commutative ring with unity. All modules are unital. A \emph{domain} is a ring without zero divisors, an \emph{integral domain} is a commutative domain. {\emph{Indecomposable} idempotents are not necessarily central, and we reserve the term \emph{primitive} for indecomposable central idempotents.} We warn the reader that the word \emph{lattice} shall occur in both of its algebraic meanings (i.e. finitely generated torsionfree module vs. algebraic structure with meet and join): even though it should be clear from the context which one we mean, we will always use qualifiers such as $\LLambda$-lattice for the former and $(\cap,+)$-lattice for the latter. For $L$ a field and $\mathscr A$ a separable $L$-algebra, the notations $\Tr_{\mathscr A/L}$ and $\tr_{\mathscr A/L}$ stand for trace and reduced trace, respectively.

We will abuse notation by writing $\oplus$ for a direct product of rings, even though this is not a coproduct in the category of rings.
The Greek capital letters gamma and lambda are commonly used notation both in Iwasawa theory and in the theory of orders, but with different meanings; we shall use the usual $\Gamma$ and $\Lambda$ to denote Iwasawa-theoretic objects, and use blackboard letters $\GGamma$ and $\LLambda$ in order-theoretic contexts.

\subsection{Acknowledgements}
The author wishes to express his gratitude to Andreas Nickel for his careful reading of a draft version of this manuscript, and Justina Lückehe for helpful discussions. {He is grateful to the anonymous referee for their comments, which helped improve the clarity of exposition considerably.}

\section{Graduated orders in completed group rings} \label{sec:graduated}
In the {one}-dimensional case, Jacobinski's conductor formula for group rings holds for all maximal, and more generally, for all hereditary orders \cite[Satz~10.7]{JacobinskiEssen}. Working with group rings over {(complete) discrete valuation rings}, Plesken generalised this in two directions \cite[Theorem~III.8]{Plesken}: he replaced hereditary orders by graduated orders and the group ring by a {self-dual} order. 
The goal of this section is to adapt these results to more general settings, whenever possible, and to explicitly describe obstructions to such generalisations whenever this isn't the case.

\subsection{Inadequacy of hereditary orders over Iwasawa algebras} \label{sec:hereditary}
Recall that an order is called \emph{left hereditary} if all of its left ideals are projective.
For Dedekind domains (globally) and discrete valuation rings (locally), the notion of hereditary orders generalises that of maximal orders \cite[Theorems~17.3,~21.4]{MO}. Thereby hereditary orders offer a well-developed theory of a subclass of non-maximal orders, see e.g. \cite[Chapter~9]{MO}. However, while the definition of hereditary orders also makes sense over the Iwasawa algebra, it is no longer true that maximal orders are hereditary.

Let $p$ be an odd rational prime, $K/\QQ_p$ a finite extension, and $D$ a skew field of finite index $n$, centre $K$. Suppose that $n\mid p-1$.
Let $K/k$ be a finite cyclic Galois $p$-extension of degree $d$ and generator $\tau$. Then $\tau$ admits a unique extension to $D$ as an automorphism of order $d$ \cite[Proposition~2.7]{W}, which we will also denote by $\tau$.

Let $\OO_{D}$ be the unique maximal $\OO_K$-order in $D$ \cite[Theorem~12.8]{MO}. 
Let $\Omega\colonequals\OO_D[[X;\tau,\tau-1]]$ denote the skew power series ring: the underlying additive group is that of the power series ring $\OO_D[[X]]$, and the multiplicative structure is given by $Xd=\tau(d) X+\tau(d)-d$ for all $d\in\OO_D$; this gives rise to a well-defined ring structure.
Let $\mathscr D\colonequals \Quot(\Omega)$ be the total ring of quotients. The ring $\mathscr D$ is a skew field, $\Omega$ is a maximal order, and $\mathscr D$ has centre $\Frac(\OO_k[[T]])$, where $T=(1+X)^{d}-1$. See \cite[\S3]{W} for these statements.
This setup should be seen as the motivating example for our results: as we will explain in \cref{sec:maximal}, the skew fields $\mathscr D$ are those occurring in the Wedderburn decomposition of the algebras $\Q^F(\G)$, {where $\G$ is a one-dimensional admissible $p$-adic Lie group.}

The maximal order $\Omega$ in $\mathscr D$ is not right hereditary. Indeed, Cohn \cite[Theorem~4]{Cohn} showed that if $A$ is a right hereditary local ring such that $\cent(A)$ is not a field, then both $A$ and $\cent(A)$ are DVRs. (In Cohn's terminology, a DVR is a (non-commutative) domain with a prime element $P$ such that all nonzero elements are of the form $P^r u$ with $r\ge0$ and $u$ a unit.) The ring $A=\Omega$ is local \cite[Proposition~2.11]{VenjakobWPT}, and its centre $\cent(A)=\OO_{k}[[T]]$ is neither a field, nor a DVR by the Weierstraß preparation theorem. Hence $\Omega$ cannot be right hereditary.

Below we will show that this example is a special case of a more general setting in which hereditary orders don't exist: see \cref{no-hereditary-orders}.

\subsection{Graduated orders over two-dimensional rings} \label{sec:graduated-higher-dim}
The study of graduated orders goes back to works of Zassenhaus in the 1970s. Graduated orders form a generalisation of maximal orders (and of hereditary orders), and they have been extensively studied over (complete) discrete valuation rings. In the following, we will adapt some of these results to graduated orders over non-PIDs.

For the {one}-dimensional versions of the statements below, see the book \cite{Plesken} or the article \cite{ZassenhausPlesken}, but note that many proofs are omitted in these works. The habilitation thesis \cite{Plesken-habil} provides more proofs, but, being a thesis, it is of restricted availability. Supposedly, there is also an unpublished manuscript of Zassenhaus from 1975 which should contain proofs. In light of this state of affairs, when it comes to statements concerning graduated orders, we aim to keep this section as self-contained as possible, even when proofs in the two-dimensional case don't differ significantly from their {one}-dimensional counterparts.

{A remark is in order concerning the hypotheses present in these works. In the book \cite{Plesken}, the base ring is assumed to be a complete local valuation ring. The works \cite{ZassenhausPlesken} and \cite{Plesken-habil}, on the other hand, don't require completeness. Indeed, one only needs to be able to lift idempotents in the sense described in \cite[\S6c]{MO}: completeness is a sufficient condition for this property to hold. Furthermore, lifting idempotents is only necessary in some but not all of the proofs, so many statements hold without assuming completeness. Therefore completeness won't be part of the standing assumptions of this work. However, note that the base ring \emph{is} complete in the arithmetically important case described in \cref{sec:graduated}.}

Let $R$ be a noetherian integral domain, let $L=\Frac(R)$ be its field of fractions, and let $\mathscr A$ be a separable $L$-algebra.

\begin{definition} \label{def:graduated}
    An $R$-order $\LLambda\subset \mathscr A$ is called a \emph{graduated order} if there exist orthogonal indecomposable idempotents $\e_1,\ldots,\e_t\in \LLambda$ such that $\sum_{i=1}^t \e_i=1$ and $\e_i\LLambda\e_i\subset \e_i\mathscr A\e_i$ is a maximal order for all $i=1,\ldots,t$.
\end{definition}

Observe the following direct consequence of the definition:
\begin{lemma} \label{lem:contained-graduated-order}
    If $\LLambda\subseteq \GGamma\subset \mathscr A$ are two $R$-orders and $\LLambda$ is graduated, then so is $\GGamma$. \qed
\end{lemma}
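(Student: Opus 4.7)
The plan is to show that the same system of idempotents $\e_1,\ldots,\e_t\in\LLambda$ witnessing that $\LLambda$ is graduated also witnesses that $\GGamma$ is graduated. Since $\LLambda\subseteq\GGamma$, these idempotents lie in $\GGamma$, are still pairwise orthogonal, and still sum to $1$. Two things remain to verify: that $\e_i\GGamma\e_i$ is a maximal $R$-order in $\e_i\mathscr A\e_i$, and that each $\e_i$ remains indecomposable (primitive) as an idempotent of $\GGamma$.

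For the first point, I would observe that $\e_i\GGamma\e_i$ is an $R$-order in the $\mathscr k$-algebra $\e_i\mathscr A\e_i$, and that it contains $\e_i\LLambda\e_i$. By hypothesis the latter is a maximal $R$-order in $\e_i\mathscr A\e_i$, so the inclusion $\e_i\LLambda\e_i\subseteq \e_i\GGamma\e_i$ must be an equality. In particular $\e_i\GGamma\e_i$ is itself maximal.

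For the second point, I would use the standard fact that an idempotent $\e$ of a ring $S$ is indecomposable if and only if the corner ring $\e S\e$ contains no idempotents other than $0$ and $\e$. Applied to $S=\LLambda$ this says $\e_i\LLambda\e_i$ has only trivial idempotents; by the equality $\e_i\GGamma\e_i=\e_i\LLambda\e_i$ established above, the same holds inside $\GGamma$, so $\e_i$ is indecomposable in $\GGamma$ as well. Combining these two points gives the definition of a graduated order for $\GGamma$.

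I don't anticipate any real obstacle: the argument is entirely formal once one invokes maximality of $\e_i\LLambda\e_i$ to force $\e_i\GGamma\e_i=\e_i\LLambda\e_i$. The only point worth spelling out carefully is that $\e_i\GGamma\e_i$ really is an $R$-order in $\e_i\mathscr A\e_i$ (it is an $R$-subalgebra that spans $\e_i\mathscr A\e_i$ over $\mathscr k$ and is finitely generated over $R$ since $\GGamma$ is), so that the maximality of $\e_i\LLambda\e_i$ can be applied.
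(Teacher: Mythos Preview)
Your proposal is correct and is precisely the argument the paper has in mind: the lemma is stated as a ``direct consequence of the definition'' and carries no written proof beyond the \qed, and the natural way to read this is exactly the verification you gave (same idempotents, $\e_i\GGamma\e_i\supseteq\e_i\LLambda\e_i$ forces equality by maximality, hence also indecomposability persists).
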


{Since $\mathscr A$ is a separable $L$-algebra, it decomposes into simple components}
\[{\mathscr A=\bigoplus_{\chi\in \mathscr X}\mathscr A_\chi\simeq\bigoplus_{\chi\in \mathscr X} M_{n_\chi}(\mathscr D_\chi),}\]
{where $\chi$ runs over some finite indexing set $\mathscr X$, and $\mathscr D_\chi$ is a skew field whose centre $\cent(\mathscr D_\chi)$ is separable over $L$.}
{It is clear that $\LLambda$ is a graduated order in $\mathscr A$ if and only if $\LLambda\cap \mathscr A_\chi$ is a graduated order in $\mathscr A_\chi$ for each $\chi\in\mathscr X$. From now on, let $\mathscr A=M_n(\mathscr D)$, and drop the subscripts $\chi$.}

\begin{remark}
    {It is instructive to consider the following special case. Suppose that $L$ is a splitting field of $\mathscr A$, that is, if $\mathscr A=M_n(L)$. {If} $\LLambda\subset \mathscr A$ is an $R$-order containing orthogonal indecomposable idempotents $\e_1,\ldots,\e_t\in \LLambda$ such that $\sum_{i=1}^t \e_i=1$, then $\LLambda$ a is graduated order. Indeed, in this case, we have $\e_i\mathscr A\e_i=L$ for all $i=1,\ldots,n$, and the only $R$-order in $L$ is $R$ itself: in particular, $\e_i\LLambda\e_i$ is necessarily maximal. This is an adaptation of \cite[Remark~2.102]{Eisele} to our setting. Note that the definition of graduated orders in op.cit.~differs slightly from our \cref{def:graduated}: indeed, we require the idempotents $\mathsf e_i$ to be contained in $\LLambda$.}
\end{remark}

\subsubsection{Standard form} \label{sec:standard-form} \phantom-\medskip

\noindent
{Let $R$ be a regular local ring of dimension at most two. Let $\mathscr D$ be a skew field whose centre is separable over $L=\Frac (R)$, and let $\mathscr A=M_n(\mathscr D)$. Fix a maximal $R$-order $\Omega$ in $\mathscr D$.}

\begin{remark}
    Note that while maximal orders are unique in the {complete} one-dimensional case \cite[Theorem~12.8]{MO}, this is no longer true if $R$ is {complete} two-dimensional: in this case, all maximal orders are conjugate by \cite[Theorem~5.4]{Ramras2}, see also \cite[Remark~3.6]{W}. {If $R$ is not complete, then uniqueness fails even in the one-dimensional case.}
\end{remark}
\begin{remark} \label{rem:Omega-local}
    It follows from the assumptions that $\Omega$ is a local ring, that is, it has a unique two-sided maximal ideal $\mm_{\Omega}$. Indeed, if $R$ is one-dimensional, then {this is \cite[Theorem~18.3]{MO}}. In the two-dimensional case, the statement follows from \cite[Theorem~5.4]{Ramras2}.
\end{remark}

\begin{lemma} \label{lem:all-conjugates-maximal}
    {All} conjugates of $M_n(\Omega)$ by a unit in $\mathscr A^\times$ are maximal $R$-orders in $\mathscr A=M_n(\mathscr D)$. Conversely, every maximal $R$-order in $\mathscr A$ is conjugate to $M_n(\Omega)$.
\end{lemma}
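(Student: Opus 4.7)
The forward direction has two parts. First, I would verify that $M_n(\Omega)$ is itself a maximal $R$-order in $\mathscr A = M_n(\mathscr D)$; this is a standard consequence of the uniqueness of $\Omega$ combined with the Morita equivalence between $\mathscr D$ and $M_n(\mathscr D)$. Second, conjugation by any $u \in \mathscr A^\times$ is an $R$-algebra automorphism of $\mathscr A$ that preserves the inclusion order on $R$-orders, so $u M_n(\Omega) u^{-1}$ is again maximal.

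For the converse, the strategy is to use Morita equivalence to identify any maximal order $\GGamma$ abstractly with $M_n(\Omega)$, then invoke Skolem--Noether to convert this ring isomorphism into an inner conjugation inside $\mathscr A$. Concretely, I would pick a primitive idempotent $e \in \GGamma$ (which exists because $\GGamma$, being module-finite over the local base $R$, is semi-perfect). The corner algebra $e\mathscr A e$ is a skew field isomorphic to $\mathscr D$, so $e\GGamma e$ is an $R$-order there. The key claim is that $e\GGamma e$ is maximal in $e\mathscr A e$; granting this, the uniqueness of $\Omega$ forces $e\GGamma e = \Omega$. Since $\mathscr A$ is simple, the idempotent $e$ is full ($\GGamma e \GGamma = \GGamma$), so $\GGamma e$ induces a Morita equivalence between $\GGamma$ and $\Omega$. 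In particular, $\GGamma e$ is a finitely generated projective right $\Omega$-module; locality of $\Omega$ then forces it to be free, $\GGamma e \cong \Omega^m$, and the reconstruction theorem gives $\GGamma \cong \End_\Omega(\Omega^m) = M_m(\Omega)$. A dimension count (after tensoring up to $\mathscr k$) forces $m = n$.

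To upgrade this abstract isomorphism to an inner conjugation inside $\mathscr A$, I would observe that maximality forces both centres to coincide with the integral closure of $R$ in $K = \cent(\mathscr A)$; hence the isomorphism is automatically linear over this centre, and tensoring up to $K$ produces a $K$-algebra automorphism of the central simple $K$-algebra $\mathscr A$. Skolem--Noether provides an implementing unit $u \in \mathscr A^\times$, yielding $\GGamma = u M_n(\Omega) u^{-1}$.

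The principal technical obstacle is verifying that $e\GGamma e$ is maximal in $e\mathscr A e$. This requires a lifting argument adapted to the $2$-dimensional regular local base: any strict over-order of $e\GGamma e$ in $e\mathscr A e$ must be shown to generate a strict over-order of $\GGamma$ inside $\mathscr A$, contradicting the maximality of $\GGamma$. The remaining steps are essentially routine applications of Morita theory, the structure of local noncommutative rings, and Skolem--Noether.
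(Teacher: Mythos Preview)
The paper's proof is two citations: Reiner \cite[Theorem~8.7]{MO} for the forward direction, and Ramras \cite[Theorem~6.6]{Ramras2} for the converse. Ramras's argument proceeds via the Auslander--Goldman description of maximal orders as endomorphism rings of reflexive lattices, together with the homological fact that over a two-dimensional regular local ring reflexive lattices are free; idempotents play no role.

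Your route is genuinely different, and it is the argument one would give over a complete DVR, but two steps do not go through as written. First, semi-perfectness of $\GGamma$ (needed to produce a primitive idempotent) requires $R$ to be Henselian; the lemma only assumes $R$ regular local, so in the stated generality you cannot lift idempotents. Second, and more seriously, the implication ``$\mathscr A$ is simple, hence $e$ is full in $\GGamma$'' is a non sequitur: simplicity gives $\mathscr A e\mathscr A=\mathscr A$, not $\GGamma e\GGamma=\GGamma$. Without fullness you have no Morita equivalence and no reason for $\GGamma e$ to be projective over $\Omega$; establishing this projectivity/freeness is exactly where the two-dimensional regular hypothesis does its work, and it is essentially the content of Ramras's theorem. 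By contrast, the step you flag as the ``principal technical obstacle''---that $e\GGamma e$ is maximal in $e\mathscr A e$---follows over any base by the sandwich argument: if $\Omega'\supsetneq e\GGamma e$ is an over-order, then $\GGamma+\GGamma\Omega'\GGamma$ is an $R$-order (check $(\GGamma\Omega'\GGamma)^2\subseteq\GGamma\Omega'\GGamma$ using $\Omega'\GGamma\Omega'=\Omega'(e\GGamma e)\Omega'\subseteq\Omega'$) strictly containing $\GGamma$, contradicting maximality. So the dimension-two input is not where you placed it; your sketch effectively relocates the hard part to a step you treat as routine.
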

\begin{proof}
    The first assertion holds more generally, see e.g.~\cite[Theorem~8.7]{MO}. The second assertion {is \cite[Theorem~18.7]{MO} for discrete valuation rings (recall that the one-dimensional regular local rings are precisely the discrete valuation rings). The two-dimensional statement follows by a direct application of Ramras's description of maximal orders over regular local rings of dimension two \cite[Theorem~5.4]{Ramras2}}.
\end{proof}

{In particular, the algebras considered in \cref{sec:hereditary} satisfy the hypotheses of \cref{lem:all-conjugates-maximal}. Therefore if $R=\Lambda^{\OO_F}(\Gamma_0)$ and $\mathscr A=\Q^F(\G)$, then the simple components satisfy these assumptions \cite[\S3 \& Theorem~4.9]{W}.}

\begin{remark}
    {It follows from \cref{lem:all-conjugates-maximal} that maximal {$R$-orders} are graduated. Indeed, by \cite[Theorem~10.5(i)]{MO}, it suffices to treat the case when $\mathscr A$ is simple. In the simple algebra $\mathscr A=M_n(\mathscr D)$, the maximal order $M_n(\Omega)$ is clearly graduated with respect to the idempotents $\mathsf f_1,\ldots,\mathsf f_n$, where $\mathsf f_\ell$ is the matrix with $1$ in the $\ell$th entry in the diagonal and zeros elsewhere, $1\le \ell\le n$. Therefore the maximal order $u M_n(\Omega) u^{-1}$ with $u\in \GL_n(\mathscr D)$ is graduated with respect to the idempotents $u\mathsf f_1 u^{-1},\ldots,u\mathsf f_n u^{-1}$.}
\end{remark}

\begin{definition} \label{def:standard-form}
    Let $t>0$ and $\mathbf{n}=(n_i)\in\ZZ_{>0}^t$ such that $\sum_{i=1}^t n_i=n$. Let $I_{ij}$ be two-sided nonzero fractional ideals of $\Omega$ for all $1\le i,j\le t$, such that
    \begin{definitionlist}
        \item \label{def:standard-form.i} $I_{ij} I_{jk}\subseteq I_{ik}$ for all $0\le i,j,k\le t$.
    \end{definitionlist}
    We write $\mathbf I=(I_{ij})$ for such a collection of fractional ideals.
    Let $\LLambda(\mathbf n, \mathbf I)$ denote the set of $n\times n$ matrices $A$ partitioned into $n_i\times n_j$ matrices $A_{ij}$ such that $A_{ij}\in M_{n_i\times n_j}(I_{ij})$.
    The set $\LLambda(\mathbf n, \mathbf I)\subset M_n(\mathscr D)$ is called a {\emph{graduated order in standard form}} if $\mathbf I$ consists of two-sided ideals and the following conditions hold:
    \begin{definitionlist}[resume*]
        \item \label{def:standard-form.ii} $I_{ii}=\Omega$ for all $0\le i\le t$,
        \item \label{def:standard-form.iii} $I_{ij} I_{ji} \subsetneq \Omega$ for all $0\le i\ne j\le t$.
    \end{definitionlist}
\end{definition}

\begin{remark}
    {\Cref{def:standard-form} generalises \cite[Definition~II.2]{Plesken}. In that work, non-negative integers $m_{ij}$ are used instead of ideals $I_{ij}$. The reason is that in the one-dimensional setting, every ideal is some power of the maximal ideal, and so it is sufficient to record the corresponding exponent, but this is no longer the case in the two-dimensional setting. Another difference is that we make a choice of a maximal order $\Omega$ {in $\mathscr D$}.}
\end{remark}

\begin{remark}
    Condition~(\ref{def:standard-form.i}) makes sure that the set $\LLambda(\mathbf n, \mathbf I)\subset M_n(\mathscr D)$ is a subring, and being a graduated order follows from (\ref{def:standard-form.ii}). Condition~(\ref{def:standard-form.iii}) helps avoid trivial subdivisions: for instance, otherwise the graduated order $M_2(\Omega)$ would have standard form with $t=1$, $n_1=2$, $I_{11}=\Omega$ on the one hand, and $t=2$, $n_i=1$, $I_{ij}=\Omega$ for $i,j=1,2$ on the other.
\end{remark}

\begin{definition}
    Let $\LLambda\subset\mathscr A$ be an $R$-order. Recall that two idempotents $\mathsf e_i,\mathsf e_j \in\mathscr A$ are said to be \emph{associates in $\LLambda$} if there exits $x,y\in\LLambda$ such that $x\in \mathsf e_i\LLambda \mathsf e_j$ and $y\in \mathsf e_j\LLambda\mathsf e_i$ such that $\mathsf e_i=xy$ and $\mathsf e_j=yx$. We write $\mathsf e_i\sim\mathsf e_j$.
\end{definition}

\begin{proposition} \label{every-standard-form}
    Every graduated $R$-order in $\mathscr A=M_n(\mathscr D)$ is isomorphic to one in standard form.
\end{proposition}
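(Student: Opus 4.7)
The plan is to conjugate $\LLambda$ in stages so that its family of idempotents becomes the standard diagonal matrix units, and then to read off the standard-form data from the resulting bimodule decomposition.

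First, I would argue that each $\mathsf e_i$ is primitive in $\mathscr A$. The corner $\mathsf e_i\LLambda\mathsf e_i$ is a maximal $R$-order in $\mathsf e_i\mathscr A\mathsf e_i$ having only trivial idempotents (else $\mathsf e_i$ would decompose in $\LLambda$); but a maximal order in $M_r(\mathscr D)$ with $r\ge 2$, being conjugate to $M_r(\Omega)$ by \cref{lem:all-conjugates-maximal}, does contain nontrivial idempotents. Hence $\mathsf e_i\mathscr A\mathsf e_i\simeq\mathscr D$ and each $\mathsf e_i$ has rank one in $M_n(\mathscr D)$. Any complete orthogonal family of rank-one idempotents is conjugate via some $u\in\mathscr A^\times$ to $(e_{11},\dots,e_{nn})$, so replacing $\LLambda$ by the isomorphic order $u\LLambda u^{-1}$ allows me to assume $\mathsf e_i=e_{ii}$. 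Uniqueness of $\Omega\subset\mathscr D$ then forces $e_{ii}\LLambda e_{ii}=\Omega\cdot e_{ii}$, and the pieces $e_{ii}\LLambda e_{jj}$ take the form $J_{ij}\cdot e_{ij}$ for $\Omega$-bimodules $J_{ij}\subset\mathscr D$ with $J_{ii}=\Omega$. The multiplicative structure of $\LLambda$ yields condition~(\ref{def:standard-form.i}), and each $J_{ij}$ is nonzero because $\LLambda$ spans $\mathscr A$ over $L$.

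Next, I would handle the $\sim$-relation. Translating $\mathsf e_i\sim\mathsf e_j$ into the bimodule picture yields $1\in J_{ij}J_{ji}$; locality of $\Omega$ then implies the existence of a unit $x\in J_{ij}\cap\mathscr D^\times$ with $x^{-1}\in J_{ji}$. Such an $x$ normalises $\Omega$ by uniqueness of the maximal order, and a short sandwich computation with $J_{ij}J_{ji}\subseteq\Omega$ gives $J_{ij}=\Omega x=x\Omega$. Partition $\{1,\dots,n\}$ into $\sim$-equivalence classes $C_1,\dots,C_t$ of sizes $n_k\colonequals|C_k|$, fix base points $i_k\in C_k$ and units $x_i\in J_{i_k,i}\cap\mathscr D^\times$ (with $x_{i_k}=1$), and conjugate $\LLambda$ by $U\colonequals\sum_i x_i\cdot e_{ii}\in\mathscr A^\times$. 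For $i,i'\in C_k$ and $j\in C_l$, the inclusions $J_{ii'}J_{i'j}\subseteq J_{ij}$ and its symmetric counterpart force $x_i J_{ij}=x_{i'}J_{i'j}$; a symmetric argument in $j$ then shows that the conjugated bimodule $x_i J_{ij}x_j^{-1}$ depends only on the class-pair $(k,l)$. Denote its common value by $I_{kl}$. After a permutation of coordinates grouping each $C_k$ into consecutive indices, $\LLambda$ is exhibited in the form $\LLambda(\mathbf n,\mathbf I)$.

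Finally, conditions~(\ref{def:standard-form.ii}) and (\ref{def:standard-form.iii}) remain to be verified. The first, $I_{kk}=\Omega$, is immediate from the normalisation, since all intra-class bimodules have been conjugated to $\Omega$; the second holds because $I_{kl}I_{lk}=\Omega$ for some $k\ne l$ would yield associate primitive idempotents in distinct classes, contradicting the construction. The main obstacle is the sandwich calculation establishing class-invariance of the normalised bimodules: its validity rests on a delicate interplay of the inclusions $J_{ab}J_{bc}\subseteq J_{ac}$ with locality of $\Omega$ and uniqueness of the maximal order in $\mathscr D$—features that, together with the preceding reduction, make the two-dimensional argument essentially parallel to Plesken's one-dimensional proof.
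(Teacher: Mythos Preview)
Your argument is largely sound and runs parallel to the paper's, but there is one genuine omission. The definition of \emph{standard form} (\cref{def:standard-form}) requires not only conditions (\ref{def:standard-form.i})--(\ref{def:standard-form.iii}) but also that ``$\mathbf I$ consists of two-sided ideals'', i.e.\ each $I_{kl}\subseteq\Omega$ is integral. Your normalising conjugation by $U=\sum_i x_i e_{ii}$ forces the intra-class entries to be $\Omega$, but the inter-class entries $I_{kl}=J_{i_k i_l}$ (with $x_{i_k}=x_{i_l}=1$) remain arbitrary two-sided fractional ideals of $\Omega$: nothing you have done prevents, say, $J_{12}=p^{-1}\Omega$, $J_{21}=p^2\Omega$. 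So as written, your output need not be in standard form.

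The paper's proof handles this by a different choice of diagonal conjugation: rather than targeting intra-class bimodules, it first clears denominators globally to embed $\LLambda\subseteq M_n(\Omega)$ (via the faithful module $\LLambda\mathsf f_k$). After that step all $\tilde I_{k\ell}$ are integral, and for associates the identity $\tilde I_{k\ell}\tilde I_{\ell k}=\Omega$ together with locality of $\Omega$ forces $\tilde I_{k\ell}=\tilde I_{\ell k}=\Omega$ immediately---no sandwich calculation or second conjugation needed. Your gap is easily repaired by inserting this embedding step before your analysis; but once you do so, your $U$-conjugation becomes superfluous, since all $x_i$ may then be taken to be $1$. In short: the paper's route is shorter and delivers integrality for free, while your route trades that step for a more explicit bimodule computation that ultimately does more work to reach a weaker conclusion.
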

See \cite[Remark~II.3]{Plesken} resp. \cite[Satz~I.4.a]{Plesken-habil} for the statement resp. proof in the {one}-dimensional version.
\begin{proof}
    The proof models that in loc.cit.
    If $\LLambda$ is a graduated order with respect to the idempotents $\mathsf f_\ell\in \mathscr A$, where $\mathsf f_\ell$ is the matrix with $1$ in the $\ell$th entry in the diagonal and zeros elsewhere, then $\LLambda$ is as in \cref{def:standard-form}. 
    
    Now suppose that $\LLambda$ is a graduated order with respect to some orthogonal indecomposable idempotents $\e_1,\ldots,\e_{n'}\in \LLambda$. 
    Any two full sets of orthogonal idempotents can be conjugated into each other by a matrix in $\mathscr A^\times$ \cite[\S6, Exercise~14]{CR}: this forces $n=n'$, and we may assume $\e_\ell=\mathsf f_\ell$ for all $1\le \ell\le n$. 
    
    Moreover, we may assume $\LLambda\subseteq M_n(\Omega)$. For this, first note that $\LLambda \mathsf f_k$ is a faithful $\LLambda$-module, that is, for all $A\ne B\in \LLambda$ there is an $M\in \LLambda$ such that $A\cdot M\mathsf f_k\ne B\cdot M\mathsf f_k$. Indeed, if $A=(a_{k\ell})$ and $B=(b_{k\ell})$, and $a_{k_0\ell_0}\ne b_{k_0\ell_0}$ for some $1\le k_0,\ell_0\le n$, then let $M$ be a matrix whose single non-zero entry $m\ne 0$ is in the position $(\ell_0,k)$. Then the $(k_0,k)$-entries of $A M\mathsf f_k$ resp. $BM\mathsf f_k$ are $a_{k_0\ell_0}m\ne b_{k_0\ell_0}m$. So left multiplication gives rise to an embedding 
    \[{\LLambda\hookrightarrow\End_{\Omega}(\LLambda\mathsf f_k)\hookrightarrow \End_{\Omega}(\Omega^n)\simeq M_n(\Omega).}\] 
    The second arrow here comes from the embedding $\LLambda\mathsf f_k\hookrightarrow\Omega^n$ given by clearing denominators, that is, multiplying by a diagonal matrix. Therefore after conjugating with a diagonal matrix, we have $\LLambda\subseteq M_n(\Omega)$; note that conjugation by a diagonal matrix acts trivially on the idempotents $\mathsf f_k$.

    Reordering rows and columns -- that is, conjugating by a permutation matrix --, we may assume that $\mathsf f_1\sim\ldots\sim\mathsf f_{n_1}$, $\mathsf f_{n_1+1}\sim\ldots\sim\mathsf f_{n_2}$, \ldots, $\mathsf f_{n_{t-1}+1}\sim\ldots\sim\mathsf f_{n_t}$, where $\sum_{i=1}^t n_i=n$. 
    We have $\mathsf f_k \LLambda\mathsf f_\ell\subseteq \mathsf f_k M_n(\Omega) \mathsf f_\ell =\Omega$, so $\mathsf f_k\LLambda\mathsf f_\ell \equalscolon \tilde I_{k\ell}$ is an integral ideal in $\Omega$. It follows from the definition of associates that $\mathsf f_k\sim\mathsf f_\ell$ iff $\tilde I_{k\ell} \tilde I_{\ell k}=\Omega$, that is, iff $\tilde I_{k\ell}=\tilde I_{\ell k}=\Omega$.
    Therefore for all $1\le i\le t$, we have
    \[\left(\sum_{k=n_{i-1}+1}^{n_i} \mathsf f_k\right) \LLambda \left(\sum_{k=n_{i-1}+1}^{n_i} \mathsf f_k\right) = M_{n_i}(\Omega),\]
    where $n_0$ is understood to be $0$. Furthermore, for all $1\le i,j\le t$,
    \[\left(\sum_{k=n_{i-1}+1}^{n_i} \mathsf f_k\right) \LLambda \left(\sum_{\ell=n_{j-1}+1}^{n_j} \mathsf f_\ell\right) = M_{n_i\times n_j}(I_{ij}),\]
    where $I_{ij}=\tilde I_{n_in_j}$. If $i\ne j$, then $\mathsf f_i\not\sim\mathsf f_j$ implies $I_{ij}I_{ji}\subsetneq\Omega$.
\end{proof}

\begin{remark} \label{rem:graduated-wrt-f}
    Note that the graduated $R$-orders in $\mathscr A=M_n(\mathscr D)$ in standard form are precisely those that are graduated with respect to the indecomposable idempotents $\mathsf f_1,\ldots,\mathsf f_n$.
\end{remark}

\begin{corollary} \label{central-conductor-independence}
    Let $\GGamma\subset\mathscr A$ be a graduated $R$-order. Then $\cent(\GGamma)=\cent(\Omega)$, {so the centre of a graduated order is a maximal $R$-order in $\cent(\mathscr A)$}. In particular, if $\LLambda\subseteq \GGamma\subseteq \mathfrak M\subset \mathscr A$ are $R$-orders with $\mathfrak M$ a maximal order, then $\mathcal F(\GGamma/\LLambda)=\mathcal F(\mathfrak M/\LLambda)$.
\end{corollary}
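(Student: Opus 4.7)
I would argue both assertions by reducing to standard form. Work in one Wedderburn component at a time, so assume $\mathscr A = M_n(\mathscr D)$; by \Cref{every-standard-form}, conjugating (which preserves centres) brings $\GGamma$ into standard form $\LLambda(\mathbf n, \mathbf I)$, so in particular $\GGamma \subseteq M_n(\Omega)$. Since $\GGamma$ is an $R$-order spanning $\mathscr A$, one has $\cent(\GGamma) = \GGamma \cap \cent(\mathscr A)$, and $\cent(\mathscr A) = \cent(\mathscr D) \cdot I_n$ consists of scalar matrices. A scalar $z I_n$ lies in $\LLambda(\mathbf n, \mathbf I)$ iff $z \in I_{ii} = \Omega$ for every $i$, whence $\cent(\GGamma) = \cent(\mathscr D) \cap \Omega = \cent(\Omega)$. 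This proves the first assertion.

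For the ``in particular'' statement, the maximal order $\mathfrak M$ is itself graduated, so the same argument gives $\cent(\mathfrak M) = \cent(\Omega) = \cent(\GGamma)$. Uniqueness of $\Omega$ identifies $\cent(\Omega)$ with the integral closure $R^{\mathrm{int}}_{\cent(\mathscr A)}$ of $R$ in $\cent(\mathscr A)$, so both central conductors are submodules of this common ring, and $\mathcal F(\mathfrak M/\LLambda) \subseteq \mathcal F(\GGamma/\LLambda)$ is immediate from $\GGamma \subseteq \mathfrak M$.

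The reverse inclusion is where the real work lies: given $x \in R^{\mathrm{int}}_{\cent(\mathscr A)}$ with $x\GGamma \subseteq \LLambda$, one must show $x\mathfrak M \subseteq \LLambda$. A natural attempt is to compare reduced traces -- both $\tr_{\mathscr A/\cent(\mathscr A)}(\GGamma)$ and $\tr_{\mathscr A/\cent(\mathscr A)}(\mathfrak M)$ reduce to $\tr_{\mathscr D/\cent(\mathscr D)}(\Omega)$, since every diagonal block of $\GGamma$ in standard form is $\Omega$. Converting this trace-level coincidence into the desired module inclusion is the principal obstacle; I expect it to go through self-duality of $\LLambda$, via \Cref{thm:cf-graduated}, which identifies $(\GGamma:\LLambda)_\ell$ with $\D(\GGamma/R)$ and reduces the problem to equality of central inverse differents.
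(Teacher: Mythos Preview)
Your argument for $\cent(\GGamma)=\cent(\Omega)$ is correct and is exactly what the paper does: reduce to standard form via \Cref{every-standard-form} and read the centre off.

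For the ``in particular'' clause, the paper's entire proof is the sentence ``Independence of the central conductor of $\GGamma$ then follows from its definition \eqref{eq:def-central-conductor}.'' You rightly isolate the easy inclusion $\F(\mathfrak M/\LLambda)\subseteq\F(\GGamma/\LLambda)$ and rightly flag the reverse one as the substantive step. The gap in your proposal is that your proposed route imports a hypothesis the corollary does not have: you invoke self-duality of $\LLambda$ and \Cref{thm:cf-graduated}, but nothing in the statement says $\LLambda$ is self-dual, and that theorem is unavailable otherwise. Without such an extra hypothesis the reverse inclusion can fail outright. Take $\LLambda=\GGamma=\begin{psmallmatrix}\Omega&\mathfrak a\\\Omega&\Omega\end{psmallmatrix}$ for a proper nonzero ideal $\mathfrak a\subsetneq\Omega$ and $\mathfrak M=M_2(\Omega)$: then $\F(\GGamma/\LLambda)=\cent(\Omega)$, whereas $\F(\mathfrak M/\LLambda)=\{x\in\cent(\Omega):x\Omega\subseteq\mathfrak a\}$ is strictly smaller. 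So your instinct that more is needed for the hard direction is entirely correct, but the ingredient you reach for is not granted by the stated hypotheses; the statement as written seems to require an assumption such as self-duality of $\LLambda$ (which does hold in the paper's intended application $\LLambda=\Lambda^{\OO_F}(\G)$), and the paper's one-line justification does not visibly close this gap either.
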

\begin{proof}
    By \cref{every-standard-form}, we may assume that $\GGamma$ is of standard form. {Then $\cent(\GGamma)=\cent(\Omega)\cdot \mathbf 1_n$ and $\cent(\mathscr A)=\cent(\mathscr D)\cdot \mathbf 1_n$, where $\mathbf 1_n$ is the $n\times n$ identity matrix. Since $R\subseteq\cent(\mathscr D)$, and since $\Omega\subseteq \mathscr D$ is an $R$-order, we have that $\cent(\Omega)\subseteq \cent(\mathscr D)$ is a maximal $R$-order. This proves the first assertion.} Independence of the central conductor of $\GGamma$ then follows from its definition \eqref{eq:def-central-conductor}.
\end{proof}

As we shall see, the existence of non-principal ideals in $\Omega$ leads to weaker statements than in the {one}-dimensional case. The following statement is a partial solution to this problem.
\begin{lemma} \label{lem:principalisation}
    Let $\LLambda(\mathbf n,\mathbf I)\subset\mathscr A$ be a graduated $R$-order in standard form. Then $\LLambda(\mathbf n,\mathbf I)$ contains a graduated $R$-order $\LLambda(\mathbf n,\mathbf I')$ in standard form whose ideal matrix $\mathbf I'$ consists of principal ideals only.
\end{lemma}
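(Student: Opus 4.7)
The plan is to replace each $I_{ij}$ by a principal two-sided sub-ideal generated by a suitably chosen central element. The essential input is the following: every nonzero two-sided fractional $\Omega$-ideal $I$ contains a nonzero element of $\cent(\Omega)$. Granting this, the construction and verification reduce to straightforward bookkeeping in the commutative ring $\cent(\Omega)$.

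To establish the input, I would first note that $I \cap \Omega$ is a nonzero two-sided integral ideal: for any nonzero $x \in I$, the Ore condition in the noetherian domain $\Omega$ yields a nonzero $b \in \Omega$ with $xb \in \Omega$, and then $xb \in I \cdot \Omega = I$, so $xb \in I \cap \Omega$. Next, for any nonzero $y \in I \cap \Omega$, the coefficients of the reduced characteristic polynomial of $y$ lie in $\cent(\Omega)$: they lie in $\cent(\mathscr D)$ by construction and are integral over $R$, and $\cent(\Omega)$ is integrally closed in $\cent(\mathscr D)$ (being the integral closure of $R$ there, by maximality of $\Omega$). The identity $\mathrm{nrd}(y) \in y\Omega$ then produces $0 \ne \mathrm{nrd}(y) \in (I \cap \Omega) \cap \cent(\Omega) \subseteq I$.

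With this in hand, for each $i \ne j$ I would pick a nonzero $a_{ij} \in I_{ij} \cap \cent(\Omega)$ and set $b := \prod_{(k,\ell):\,k \ne \ell} a_{k\ell} \in \cent(\Omega)$, which is well-defined since the factors commute. Define
\[ I'_{ij} := \begin{cases} \Omega & \text{if } i = j, \\ \Omega \cdot a_{ij} b & \text{if } i \ne j. \end{cases} \]
Each $I'_{ij}$ is then a principal two-sided $\Omega$-ideal contained in $I_{ij}$ (using $a_{ij} \in I_{ij}$ and $b \in \Omega$). Axiom~\ref{def:standard-form.ii} is immediate, and \ref{def:standard-form.iii} follows from $I'_{ij} I'_{ji} \subseteq I_{ij} I_{ji} \subsetneq \Omega$. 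For \ref{def:standard-form.i} the only nontrivial case is $i, j, k$ pairwise distinct; then $I'_{ij} I'_{jk} = \Omega \cdot a_{ij} a_{jk} b^2$, and since $a_{ik}$ appears as a factor of the central element $b$, the quotient $a_{ij} a_{jk} b / a_{ik}$ lies in $\cent(\Omega)$, so $I'_{ij} I'_{jk} \subseteq \Omega \cdot a_{ik} b = I'_{ik}$. The main obstacle is the input on the existence of central elements in two-sided ideals; once this is granted, the remainder of the argument is an elementary manipulation.
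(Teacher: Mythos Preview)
Your proof is correct. The paper's route is simpler: take $J := \bigcap_{i,j} I_{ij}$, pick any nonzero $x \in J$, and set every off-diagonal $I'_{ij} := x\Omega$; with all off-diagonal entries equal to the same ideal, the axioms of Definition~\ref{def:standard-form} are immediate. Your construction---a separate central $a_{ij} \in I_{ij}$ for each pair, followed by the product $b$ to force condition~(\ref{def:standard-form.i})---is more elaborate than necessary, since once you know each $I_{ij}$ contains a nonzero central element, their product $b$ already lies in $J$ and one can simply take $x = b$ and argue as the paper does. What your approach does buy is an explicit guarantee that the $I'_{ij}$ are two-sided, which is required by the definition of standard form and is implicitly needed for $I'_{ii} I'_{ij} = \Omega \cdot x\Omega \subseteq x\Omega$; the paper's bare ``$x\Omega$'' glosses over this point.
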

\begin{proof}
    If $\mathbf n=n_1$, there is nothing to prove. Otherwise, let
    \[J \colonequals \bigcap_{1\le i,j\le t} I_{ij}\]
    be the intersection of the ideals in $\mathbf I$: since $\Omega$ has no zero divisors, this is a proper nonzero ideal of $\Omega$. Let $x\in J-\{0\}$.
    Define $\mathbf I'$ by setting
    \[
        I'_{ii}\colonequals \begin{cases}
            \Omega & \text{if $i=j$} \\
            x\Omega & \text{if $i\ne j$}
        \end{cases}
    \]
    Then $\LLambda(\mathbf n, \mathbf I')\subseteq \LLambda(\mathbf n,\mathbf I)$ fulfills the conditions in \cref{def:standard-form}.
\end{proof}

\begin{notation}
    We introduce the following notation.
    \begin{definitionlist}
    \item Let $\mathbf I=(I_{ij})_{1\le i,j\le n}$ and $\mathbf I'=(I'_{ij})_{1\le i,j\le n}$ be two collections of nonzero fractional ideals of $\Omega$.\footnote{For sake of exposition, we will use the terminology of matrices for these objects, although they aren't matrices themselves.}
    Let $\mathbf I+\mathbf I'\colonequals(I_{ij}+I'_{ij})$ resp. $\mathbf I*\mathbf I'\colonequals(I_{ij}I'_{ij})$ denote their entry-wise sum resp. product. 
    \item Let $\mathbf I_j=(I_{ij})_{1\le i\le n}$ resp. ${}_i\mathbf I=(I^{-1}_{ij})_{1\le j\le n}$ denote the $j$th column of $\mathbf I$ resp. the $i$th column of $\mathbf I^{-,\top}$, where $\mathbf I^{-,\top}$ is the transpose of the entry-wise inverse of $\mathbf I$, provided that all fractional ideals $I_{ij}$ are invertible.
    \item If $I$ is a fractional ideal of $\Omega$, let $I\mathbf 1$ denote the $n\times n$-matrix with $I$ in its diagonal and the zero ideal elsewhere. 
    \item Let $\mathbf E \in M_{n\times n}(\mathscr D)$ be the $n\times n$ matrix consisting entirely of ones. Let $\mathbf E_{\mathfrak m_\Omega}\colonequals \mathfrak m_\Omega \mathbf 1+\Omega(\mathbf E-\mathbf 1)$, so that $\mathbf E_{\mathfrak m_\Omega,ii}=\mathfrak m_\Omega$ and $\mathbf E_{\mathfrak m_\Omega,ij}=\Omega$ for $i\ne j$, {where $\mm_\Omega$ is the unique maximal ideal in $\Omega$ (see \cref{rem:Omega-local}).}
    \item For a vector $\mathbf m=(m_i)$ of length $t$ of fractional ideals of $\Omega$, let $\mathbb L(\mathbf m)\le \mathscr D^n$ be the set of vectors of length $n$ with the first $n_1$ entries in $m_1$, the next $n_2$ entries in $m_2$ and so forth. 
    \item Let $\mathfrak Z(\mathscr D^n)$ be the set of nonzero left $\LLambda(\mathbf n,\mathbf I)$-lattices in $\mathscr D^n$, where the $\LLambda(\mathbf n,\mathbf I)$-module structure on $\mathscr D^n$ is by left multiplication.
    \end{definitionlist}
\end{notation}

The following is an adaptation of \cite[Remark~II.4]{Plesken} to our setting; see also \cite[Satz~I.4.b]{Plesken-habil}.
\begin{lemma}
    Let $\mathbf n$ and $\mathbf I$ be as {in \cref{def:standard-form}}. Then the following hold.
    \begin{lemmalist}
    \item \label{II.4.i} The Jacobson radical of $\LLambda(\mathbf n,\mathbf I)$ is $\Jac(\LLambda(\mathbf n,\mathbf I))=\LLambda(\mathbf n, \mathbf I*\mathbf E_{\mathfrak m_\Omega})$.
    \item \label{II.4.ii} The quotient is the semisimple algebra $\LLambda(\mathbf n,\mathbf I)/\Jac(\LLambda(\mathbf n,\mathbf I))\simeq \bigoplus_{i=1}^t M_{n_i}(\Omega/\mm_\Omega)$.
    \end{lemmalist}
\end{lemma}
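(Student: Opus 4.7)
The plan is to establish (ii) first by constructing an explicit quotient map, deduce one inclusion in (i) as a corollary, and close the opposite inclusion by Nakayama in a nilpotent quotient.

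For (ii), I would consider the projection
\[
\pi \colon \LLambda(\mathbf n, \mathbf I) \longrightarrow \bigoplus_{i=1}^{t} M_{n_i}(\Omega/\mathfrak m_\Omega), \qquad A \longmapsto \left(A_{ii} \bmod \mathfrak m_\Omega\right)_{1 \le i \le t}.
\]
The crucial verification is multiplicativity. For $A, B \in \LLambda(\mathbf n, \mathbf I)$, the $i$-th diagonal block of $AB$ is $\sum_k A_{ik} B_{ki}$, and for $k \ne i$ the entries of $A_{ik} B_{ki}$ lie in $I_{ik} I_{ki}$, which by \cref{def:standard-form.iii} is a proper ideal of $\Omega$ and hence contained in $\mathfrak m_\Omega$ since $\Omega$ is local; these cross terms therefore vanish in the target. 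Surjectivity is immediate because each $M_{n_i}(\Omega)$ embeds in $\LLambda(\mathbf n, \mathbf I)$ as the $i$-th diagonal block. Unwinding the definition of $\mathbf E_{\mathfrak m_\Omega}$ gives $\ker \pi = \LLambda(\mathbf n, \mathbf I * \mathbf E_{\mathfrak m_\Omega}) =: J$. Since $\Omega/\mathfrak m_\Omega$ is a skew field, the quotient is semisimple, which proves (ii) and gives $\Jac(\LLambda(\mathbf n, \mathbf I)) \subseteq J$.

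For the reverse inclusion in (i), it suffices to show $1 + x \in \LLambda(\mathbf n, \mathbf I)^\times$ for every $x \in J$. Decompose $x = x_d + x_o$ into its block-diagonal and off-block-diagonal parts. Then $x_d \in \bigoplus_i M_{n_i}(\mathfrak m_\Omega) = \Jac\left(\bigoplus_i M_{n_i}(\Omega)\right)$, so $1 + x_d$ is already invertible in $\bigoplus_i M_{n_i}(\Omega) \subseteq \LLambda(\mathbf n, \mathbf I)$, reducing to the case where $x = z$ is purely off-block-diagonal. Here I would run a pigeonhole argument on vertex sequences in $\{1, \ldots, t\}$: iterating \cref{def:standard-form.i} collapses any run $I_{i_0 i_1} \cdots I_{i_{r-1} i_r}$ with no repeated vertex into $I_{i_0 i_r}$, and whenever a vertex repeats, \cref{def:standard-form.iii} extracts a factor of $\mathfrak m_\Omega$ from the resulting cycle. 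Since any path of length exceeding $t$ must repeat a vertex, the entries of $z^N$ lie in $\mathfrak m_\Omega^{\lfloor N/t \rfloor} I_{ij}$. The quotient $\Omega/\mathfrak m_R \Omega$ is a finite-dimensional algebra over the residue field $R/\mathfrak m_R$ whose Jacobson radical $\mathfrak m_\Omega/\mathfrak m_R \Omega$ is nilpotent, so $\mathfrak m_\Omega^m \subseteq \mathfrak m_R \Omega$ for some $m$. Choosing $N$ large enough yields $z^N \in \mathfrak m_R \LLambda(\mathbf n, \mathbf I)$. Because $\LLambda(\mathbf n, \mathbf I)$ is finitely generated over $R$, the non-commutative Nakayama lemma gives $\mathfrak m_R \LLambda(\mathbf n, \mathbf I) \subseteq \Jac(\LLambda(\mathbf n, \mathbf I))$, whence $1 - (-z)^N$ is a unit. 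The telescoping identity $(1 + z)\sum_{k=0}^{N-1}(-z)^k = 1 - (-z)^N$ then exhibits $1 + z$ as a unit in $\LLambda(\mathbf n, \mathbf I)$.

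The step I expect to be most delicate is the cycle-collapse bookkeeping. In Plesken's one-dimensional setting every nonzero ideal of $\Omega$ is a power of $\mathfrak m_\Omega$, so ideal containments are easy to track; in the present two-dimensional regime, $\Omega$ admits non-principal ideals and the fractional ideals $I_{ij}$ need not even lie inside $\Omega$. The workaround is to measure nilpotency modulo $\mathfrak m_R \LLambda(\mathbf n, \mathbf I)$, where the ambient algebra becomes finite-dimensional over the residue field $R/\mathfrak m_R$ and the classical fact that the Jacobson radical of a finite-dimensional algebra is nilpotent applies.
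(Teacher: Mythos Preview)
Your argument is correct in outline and reaches the right conclusion, but it differs substantially from the paper's approach and one step needs tightening.

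The paper avoids your nilpotence argument entirely. After establishing the inclusion $\Jac(\LLambda(\mathbf n,\mathbf I))\subseteq J$ via the semisimple quotient (exactly as you do), it proves the reverse inclusion by showing that each single-entry matrix $xE_{k\ell}\in J$ lies in the radical: for any $A\in\LLambda(\mathbf n,\mathbf I)$, the matrix $\mathbf 1-AxE_{k\ell}$ differs from the identity only in its $\ell$th column, its $(\ell,\ell)$-entry lies in $1+\mm_\Omega$ (using either $x\in\mm_\Omega$ when $i=j$, or condition~(iii) when $i\ne j$), and one checks directly that the inverse lies in $\LLambda(\mathbf n,\mathbf I)$. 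Since $J$ is additively generated by such $xE_{k\ell}$, this yields $J\subseteq\Jac$. No pigeonhole, no Nakayama, no passage to $\LLambda/\mm_R\LLambda$.

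Your route via nilpotence modulo $\mm_R\LLambda$ also works, but the assertion that the entries of $z^N$ lie in $\mm_\Omega^{\lfloor N/t\rfloor}I_{ij}$ is not justified as written: the $\mm_\Omega$ factors you extract from distinct cycles sit at different positions in the ideal product, and since $\Omega$ is noncommutative you cannot slide them together into a single power. The fix is to anchor all cycles at a \emph{single} vertex $v$, namely the one appearing most often in the path (say $s$ times). The sub-products between its consecutive occurrences are then genuinely adjacent cycles, yielding a contiguous block $\mm_\Omega^{s-1}$, so the whole product lands in $I_{i_0 v}\,\mm_\Omega^{s-1}\,I_{v i_N}$. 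Once $s-1\ge m$ this becomes $I_{i_0 v}\,\mm_R\Omega\,I_{v i_N}=\mm_R I_{i_0 v}I_{v i_N}\subseteq\mm_R I_{i_0 i_N}$, since $\mm_R$ is central; and $s\ge\lceil(N+1)/t\rceil$ by pigeonhole. With this adjustment your argument goes through. What your approach buys is a conceptual explanation (the image of $J$ in a finite-dimensional quotient is nilpotent); what the paper's buys is a two-line verification with no bookkeeping at all.
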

\begin{proof}
    It is clear that the quotient $\LLambda(\mathbf n,\mathbf I)/\LLambda(\mathbf n, \mathbf I*\mathbf E_{\mathfrak m_\Omega})\simeq \bigoplus_{i=1}^t M_{n_i}(\Omega/\mm_\Omega)$ is semisimple. Since the Jacobson radical is the smallest ideal with semisimple quotient, it follows that $\Jac(\LLambda(\mathbf n,\mathbf I))\subseteq \LLambda(\mathbf n, \mathbf I*\mathbf E_{\mathfrak m_\Omega})$. 
    
    We claim that every matrix in $\LLambda(\mathbf n, \mathbf I*\mathbf E_{\mathfrak m_\Omega})$ with a single nonzero entry is in the Jacobson radical. Since $\Jac(\LLambda(\mathbf n,\mathbf I))$ is an ideal and since every matrix in $\LLambda(\mathbf n, \mathbf I*\mathbf E_{\mathfrak m_\Omega})$ is a linear combination of such matrices, this will show that $\Jac(\LLambda(\mathbf n,\mathbf I))\supseteq \LLambda(\mathbf n, \mathbf I*\mathbf E_{\mathfrak m_\Omega})$.
    
    Let $1\le k,\ell\le n$, and let $E_{k\ell}$ denote the $n\times n$ matrix with $1$ in the $(k,\ell)$-entry and zeros elsewhere. Let $i$ and $j$ be the corresponding row and column in $\mathbf I$, that is, $\sum_{r=1}^{i-1} n_r < k \le \sum_{r=1}^i n_r$ and $\sum_{r=1}^{j-1} n_r< \ell \le \sum_{r=1}^j n_r$. Let $x\in I_{ij}$ if $i\ne j$, and $x\in I_{ii}\mm_\Omega=\mm_\Omega$ if $i=j$. Consider the matrix $x E_{k\ell}$.
    
    Let $A\in \LLambda(\mathbf n,\mathbf I)$ be arbitrary: then $\mathbf 1-AxE_{k\ell}$ is invertible. Indeed, if $i=j$, then this is clear from $x\in \mm_\Omega$, and if $i\ne j$, then this follows from \cref{def:standard-form.iii}. Since $\mathbf 1-AxE_{k\ell}$ is invertible for every $A$, we have $x E_{k\ell}\in\Jac(\LLambda(\mathbf n,\mathbf I))$, as claimed. This proves (\ref{II.4.i}), and (\ref{II.4.ii}) follows as a direct consequence.
\end{proof}

\subsubsection{$\LLambda$-lattices} \label{sec:lattices} \phantom-\medskip

\noindent

\begin{definition}
    Let {$R$ be a noetherian integral domain and} $\LLambda$ an $R$-order. Then a \emph{$\LLambda$-lattice} is a $\LLambda$-module that is an $R$-lattice, that is, a $\LLambda$-module that is finitely generated over $R$ and torsion-free over $R$.
\end{definition}

Nonzero $\LLambda$-lattices are described as follows. See \cite[Remark~II.4]{Plesken} and \cite[Theorem~3.18]{ZassenhausPlesken} for the {one}-dimensional versions.

\begin{lemma} \label{II.4}
    {Suppose that $R$, $\mathscr D$, and $\Omega$ are as in \cref{sec:standard-form}, and let $\mathbf n$ and $\mathbf I$ be as in \cref{def:standard-form}}. Then the following hold.
    \begin{lemmalist}
        \item \label{II.4.iii} The set of nonzero left $\LLambda(\mathbf n,\mathbf I)$-lattices in $\mathscr D^n$ is $\mathfrak Z(\mathscr D^n)=\{\mathbb L(\mathbf m): \forall i,j: I_{ij} m_j\subseteq m_i\}$.
        \item \label{II.4.viii} The two-sided fractional ideals of $\LLambda(\mathbf n,\mathbf I)$ are of the form $\LLambda(\mathbf n,\mathbf J)$ with $\mathbf J$ satisfying $I_{ij} J_{jk} \subseteq J_{ik}$ and $J_{ij} I_{jk}\subseteq J_{ik}$ for all $1\le i,j,k\le t$.
    \end{lemmalist}
\end{lemma}

\begin{proof}
    (\ref{II.4.iii}) On the one hand, it is clear that every such $\mathbb L(\mathbf m)$ is in $\mathfrak Z(\mathscr D^n)$. On the other hand, let $L\in\mathfrak Z(\mathscr D^n)$. Considering that $\LLambda(\mathbf n,\mathbf I) L=L$ and expanding the product on the left hand side, one sees that each coordinate in $L$ runs through a fractional ideal of $\Omega$. By the shape of the matrices in $\LLambda(\mathbf n,\mathbf I)$, it follows that the first $n_1$, then the following $n_2$, and so forth, of these fractional ideals must agree -- so $L=\mathbb L(\mathbf m)$ for some $\mathbf m$. Once again using the assumption that $L$ is a left $\LLambda(\mathbf n,\mathbf I)$-module, the condition $I_{ij} m_j\subseteq m_i$ for all $i,j$ follows.

    (\ref{II.4.viii}) It is clear that the sets $\LLambda(\mathbf n,\mathbf J)$ are fractional ideals.
    A fractional ideal of $\LLambda(\mathbf n,\mathbf I)$ is a full left and right $\LLambda(\mathbf n,\mathbf I)$-lattice in $M_n(\mathscr D)$, hence the converse follows from (\ref{II.4.iii}).
    Indeed, (\ref{II.4.iii}) shows that its columns must be of the form $\mathbb L(\mathbf m^{(i)})$, such that the collection $\mathbf J=(\mathbf m^{(i)})$ with columns $\mathbf m^{(i)}$ satisfies the first condition in the statement. The analogue of (\ref{II.4.iii}) for right $\LLambda(\mathbf n,\mathbf I)$-lattices, the proof of which goes along the same lines as that of the left version, shows necessity of the second condition on $\mathbf J$.
\end{proof}

In the {one}-dimensional case, the counterparts of the following observations serve as the basis of the study of left $\LLambda(\mathbf n,\mathbf I)$-lattices \cite[Sätze~I.8 \& I.23]{Plesken-habil}, \cite[Remark~II.4(iv--vii)]{Plesken}.

\begin{lemma}
    In addition to the assumptions {of \cref{II.4}}, suppose that $R$ is a complete local ring.
    \begin{propositionlist}
    \item \label{II.4.iv} There is an isomorphism $\mathbb L(\mathbf m)\simeq \mathbb L(\mathbf m')$ of $\LLambda(\mathbf n,\mathbf I)$-lattices if any only if $\mathbf m=\alpha \mathbf m'$ for some $\alpha\in\mathscr D^\times$.
    \item \label{II.4.v} The set $\mathbb L(\mathbf I_i)$ is a projective and indecomposable left $\LLambda(\mathbf n,\mathbf I)$-lattice. Conversely, every projective indecomposable left $\LLambda(\mathbf n,\mathbf I)$-lattice is isomorphic to $\mathbb L(\mathbf I_i)$ for some $i$.
    \item \label{II.4.vii} The set $\mathbb L({}_i\mathbf I)$ is a projective and indecomposable right $\LLambda(\mathbf n,\mathbf I)$-lattice. Conversely, every projective indecomposable right $\LLambda(\mathbf n,\mathbf I)$-lattice is isomorphic to $\mathbb L({}_i\mathbf I)$ for some $i$.
    \end{propositionlist}
\end{lemma}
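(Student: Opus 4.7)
For (i), my plan is to extend any $\LLambda(\mathbf n,\mathbf I)$-module isomorphism $\varphi\colon \mathbb L(\mathbf m)\to\mathbb L(\mathbf m')$ to the ambient rational space and invoke Morita equivalence. Concretely, tensoring with the total ring of quotients extends $\varphi$ to an endomorphism of $\mathscr D^n$; since $\LLambda(\mathbf n,\mathbf I)$ contains a $\mathscr k$-basis of $M_n(\mathscr D)$ by the definition of an $R$-order, this extension is automatically a morphism of left $M_n(\mathscr D)$-modules. The standard Morita dictionary then identifies $\End_{M_n(\mathscr D)}(\mathscr D^n)$ with $\mathscr D$ acting by scalar multiplication, so $\varphi$ is given by an element $\alpha\in\mathscr D^\times$. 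Comparing the images component-wise yields the relation $\mathbf m=\alpha\mathbf m'$; the converse is immediate, since scalar multiplication by any $\alpha\in\mathscr D^\times$ manifestly defines a $\LLambda(\mathbf n,\mathbf I)$-module isomorphism $\mathbb L(\mathbf m')\to\mathbb L(\alpha\mathbf m')$.

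For (ii), the standard idempotents $\mathsf f_1,\ldots,\mathsf f_n$ give a decomposition $\LLambda(\mathbf n,\mathbf I)=\bigoplus_{k=1}^n \LLambda(\mathbf n,\mathbf I)\mathsf f_k$ of the regular representation, and a direct inspection of the column structure of a matrix in $\LLambda(\mathbf n,\mathbf I)$ identifies $\LLambda(\mathbf n,\mathbf I)\mathsf f_k\simeq \mathbb L(\mathbf I_i)$ whenever $k$ lies in the $i$th block. Each $\mathbb L(\mathbf I_i)$ is therefore projective as a direct summand of a free module; indecomposability follows from the identification $\End_{\LLambda(\mathbf n,\mathbf I)}(\mathbb L(\mathbf I_i))\simeq \mathsf f_k\LLambda(\mathbf n,\mathbf I)\mathsf f_k=\Omega$, which is local by assumption. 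For the exhaustiveness claim, I would invoke that $\LLambda(\mathbf n,\mathbf I)$ is module-finite over the complete local noetherian ring $R$, hence semiperfect, so that Krull--Schmidt holds for projective $\LLambda(\mathbf n,\mathbf I)$-lattices and isomorphism classes of projective indecomposables correspond bijectively to isomorphism classes of simple modules over the semisimple quotient $\LLambda(\mathbf n,\mathbf I)/\Jac(\LLambda(\mathbf n,\mathbf I))\simeq\bigoplus_{i=1}^t M_{n_i}(\Omega/\mm_\Omega)$, of which there are precisely $t$. Since the lattices $\mathbb L(\mathbf I_1),\ldots,\mathbb L(\mathbf I_t)$ are pairwise non-isomorphic by (i) combined with \cref{def:standard-form.iii}, the list is exhaustive.

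Statement (iii) follows by the mirror argument applied to $\LLambda(\mathbf n,\mathbf I)$ regarded as a right module via the decomposition $\LLambda(\mathbf n,\mathbf I)=\bigoplus_k \mathsf f_k \LLambda(\mathbf n,\mathbf I)$, together with the evident right-module analogue of the preceding lemma on the shape of right lattices. The delicate step is the scalar-extension in (i): I need to verify that the abstract $\LLambda(\mathbf n,\mathbf I)$-linear map really does extend to a morphism of left $M_n(\mathscr D)$-modules, which in turn rests on the noncommutative Morita equivalence between $\mathscr D$ and $M_n(\mathscr D)$. The appeal to semiperfectness in (ii), which is where the completeness hypothesis on $R$ enters essentially, is the other ingredient that should be cited with care.
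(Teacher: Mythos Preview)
Your argument is correct, and for parts (ii) and (iii) it is essentially the paper's: projectivity from the column decomposition $\LLambda(\mathbf n,\mathbf I)=\bigoplus_i \mathbb L(\mathbf I_i)^{\oplus n_i}$, indecomposability from $\End_{\LLambda(\mathbf n,\mathbf I)}(\mathbb L(\mathbf I_i))\simeq\Omega$ being local, and the converse via Krull--Schmidt over a complete local base. The paper handles (iii) by passing to $R$-duals rather than rerunning the argument, but this is cosmetic.

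For (i), however, your route genuinely differs from the paper's. The paper restricts a given isomorphism to each diagonal block via the idempotents $A^{(i)}$, invokes a general fact about two-sided ideals in a domain to obtain scalars $\alpha_i\in\mathscr D^\times$ with $m_i=\alpha_i m'_i$, and then argues that the $\alpha_i$ must coincide because the diagonal matrix they assemble into is forced to be central. Your approach instead extends $\varphi$ to the ambient module $\mathscr D^n$ by scalar extension to $L$ and identifies $\End_{M_n(\mathscr D)}(\mathscr D^n)\simeq\mathscr D^{\mathrm{op}}$ directly, producing a single scalar in one stroke. This is cleaner: it avoids the blockwise detour and the centrality step, and it makes transparent why completeness is not needed for (i). Two small points: the span you want is over $L=\Frac(R)$, not $\mathscr k$, since $\LLambda(\mathbf n,\mathbf I)$ is an $R$-order; and your scalar acts on the right, so you obtain $m_i\beta=m'_i$ rather than $m_i=\alpha m'_i$ --- harmless in practice, but worth noting when $\mathscr D$ is noncommutative.
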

\begin{proof}
    Recall the following general fact, see e.g. \cite[368]{Facchini}: if $\mathfrak R$ is a domain and $\mathfrak a,\mathfrak b\subseteq \mathfrak R$ are two-sided ideals, then there is an isomorphism $f:\mathfrak a\to\mathfrak b$ of left $\mathfrak R$-modules if and only if there is a unit $\alpha\in\Frac(\mathfrak R)^\times$ such that $\mathfrak a=\alpha \mathfrak b$. Indeed, the claim is trivial if $\mathfrak a=(0)$, and if $\mathfrak a\ne(0)$, then for any $a\in\mathfrak a-\{0\}$, one has $f(a)\mathfrak a=\{f(a)x:x\in \mathfrak a\}=\{f(ax):x\in\mathfrak a\}=\{a f(x):x\in\mathfrak a\}=\{ay:y\in \mathfrak b\}=a\mathfrak b$, so $\alpha\colonequals f(a)^{-1}a$ works. Note that $\alpha$ is determined at most up to a unit in $\mathfrak R^\times$.
    
    (\ref{II.4.iv}) The `only if' part is clear. Conversely, let a $\LLambda(\mathbf n,\mathbf I)$-module isomorphism $f:\mathbb L(\mathbf m)\xrightarrow{\sim}\mathbb L(\mathbf m')$ be given. For $1\le i\le t$, let $A^{(i)}\colonequals\sum_{k=1}^{n_i} \mathsf f_{\sum_{r=1}^{i-1} n_r + k}\in M_{n\times n}(\mathscr D)$ be the diagonal matrix with ones along the diagonal of the $i$th block and zeros elsewhere.
    For all $i$, we have $f(A^{(i)} \mathbb L(\mathbf m))=A^{(i)}\mathbb L(\mathbf m')$, so the fact above provides $\alpha_i\in\mathscr D^\times$ such that $m_{i}=\alpha_i m'_{i}$ for all $1\le i\le t$. Since $f$ was a $\LLambda(\mathbf n,\mathbf I)$-module isomorphism, so is left multiplication by the diagonal matrix $A\colonequals\sum_{i=1}^{t} \sum_{s=1}^{n_i} \alpha_i \mathsf f_{\sum_{r=1}^{i-1} n_r + s}$ with the $\alpha_i$s in its diagonal. Hence $A\in\cent(\LLambda(\mathbf n,\mathbf I))$, so $A$ is a scalar matrix, and all $\alpha_i$s can be chosen to be equal.

    (\ref{II.4.v}) By definition, $\bigoplus_{i=1}^t \mathbb L(\mathbf I_i)^{\oplus n_i}=\LLambda(\mathbf n, \mathbf I)$, so each $\mathbb L(\mathbf I_i)$ is a direct summand of a free $\LLambda(\mathbf n,\mathbf I)$-module and thus projective. We have $\End_{\LLambda(\mathbf n,\mathbf I_i)} \mathbb L(\mathbf I_i)=\Omega$ is a local ring, hence $\mathbb L(\mathbf I_i)$ is indecomposable \cite[Proposition~30.5]{CR}; {this is where the completeness of $R$ is used}. The converse statement follows by an application of \cref{II.4.iv} and the Krull--Schmidt--Azumaya theorem \cite[Theorem~30.6]{CR}.

    (\ref{II.4.vii}) A $\LLambda(\mathbf n,\mathbf I)$-lattice is a finitely generated module, hence it is projective iff its $R$-dual is projective, so (\ref{II.4.vii}) is equivalent to (\ref{II.4.v}).
\end{proof}

\subsubsection{Inverse differents} \label{sec:inverse-different}

\begin{definition} \label{def:D-lattices}
    Let {$R$ be a noetherian integrally closed integral domain with field of fractions $L$, let} $\mathscr A$ be a separable $L$-algebra, let $\LLambda\subset\mathscr A$ be an $R$-order, and let $M\subset \mathscr A$ be a full left $\LLambda$-lattice. The associated dual right $\LLambda$-lattices with respect to the (reduced) trace are defined as follows:
    \begin{align*}
        \D(M/R)=\D_{\mathscr A/L}(M/R) &\colonequals \{a\in\mathscr A: \tr_{\mathscr A/L}(a M) \subseteq R \}, \\
        \D_{\ord}(M/R) &\colonequals \{a\in\mathscr A: \Tr_{\mathscr A/L}(a M) \subseteq R \}.
    \end{align*}
    Here the reduced trace is the composition $\tr_{\mathscr A/L}=\tr_{\cent(\mathscr A)/L}\circ\tr_{\mathscr A/\cent(\mathscr A)}$.
    If there is no risk of confusion, we shall suppress the algebra $\mathscr A$ and the field $L$ from the notation.
    The inverse different of $M$ over $R$ is $\D(M/R)$. The different is defined as $\DD(M/R)\colonequals\D(M/R)^{-1}$ whenever $\D(M/R)$ is invertible.
\end{definition}

The behaviour of inverse differents under a change of the ring $R$ is described by the following:
\begin{lemma}[{\cite[Lemma~2.1]{NickelConductor}}] \label{N2.1}
    {Let $R$, $L$, $\mathscr A$, and $M$ be as in \cref{def:D-lattices}, and suppose that $\mathscr A$ is a simple $L$-algebra.}
    Let $L'$ be an intermediate field in the extension $\cent(\mathscr A)/L$, and let $L'$ be the integral closure of $R$ in $L'$. Then
    \[\D(M/R)\supseteq \D(M/R')\D(R'/R),\]
    and this containment becomes an equality whenever $\D(R'/R)$ is invertible.
\end{lemma}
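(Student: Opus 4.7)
The plan is to exploit two standard facts: (i) transitivity of the reduced trace through $L'$, namely $\tr_{\mathscr A/L} = \tr_{L'/L} \circ \tr_{\mathscr A/L'}$, where $\tr_{\mathscr A/L'}$ denotes the composition $\tr_{\cent(\mathscr A)/L'} \circ \tr_{\mathscr A/\cent(\mathscr A)}$; and (ii) the centrality of $L'$ in $\mathscr A$, which lets one pull elements of $L'$ through $\tr_{\mathscr A/L'}$ by linearity.

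For the containment $\D(M/R) \supseteq \D(M/R')\D(R'/R)$, I would take $x \in \D(R'/R)$ and $a \in \D(M/R')$ and compute directly: by transitivity and centrality of $x$,
\[
    \tr_{\mathscr A/L}(xaM) = \tr_{L'/L}\bigl(x \tr_{\mathscr A/L'}(aM)\bigr) \subseteq \tr_{L'/L}(xR') \subseteq R,
\]
using $\tr_{\mathscr A/L'}(aM) \subseteq R'$ (from $a \in \D(M/R')$) and $x \in \D(R'/R)$. Hence $xa \in \D(M/R)$. This direction requires no additional hypotheses.

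For equality when $\D(R'/R)$ is invertible, set $\DD(R'/R) = \D(R'/R)^{-1}$; it suffices to show $\DD(R'/R) \cdot \D(M/R) \subseteq \D(M/R')$. Given $a \in \D(M/R)$, the key intermediate claim is that $\tr_{\mathscr A/L'}(aM) \subseteq \D(R'/R)$. To verify it, for $m \in M$ and $r \in R'$ I would use centrality of $r$ to rewrite $r\tr_{\mathscr A/L'}(am) = \tr_{\mathscr A/L'}(a \cdot rm)$, then apply $\tr_{L'/L}$ and transitivity:
\[
    \tr_{L'/L}\bigl(r \tr_{\mathscr A/L'}(am)\bigr) = \tr_{\mathscr A/L}(a \cdot rm) \in R,
\]
because $a \in \D(M/R)$. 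Granted the claim, for $y \in \DD(R'/R)$ centrality gives $\tr_{\mathscr A/L'}(yaM) = y \tr_{\mathscr A/L'}(aM) \subseteq \DD(R'/R) \cdot \D(R'/R) = R'$, so $ya \in \D(M/R')$ as required.

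The main obstacle is the tacit use of $rm \in M$, i.e., the hypothesis $R' \cdot M \subseteq M$, which is needed only for the equality direction. In the settings of this paper this is automatic because $\LLambda$ is taken to contain $R'$ in its centre --- a condition automatically satisfied by maximal orders by integrality of $R'$ over $R$, and implicit wherever such a formula is invoked. Without it, the equality direction need not hold, but the containment survives unchanged.
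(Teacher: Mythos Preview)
The paper does not supply its own proof of this lemma; it is simply quoted from \cite[Lemma~2.1]{NickelConductor}. Your argument is the standard one---transitivity of the reduced trace through the intermediate field $L'$ together with $L'$-linearity---and is essentially what one finds in Nickel's paper, so the proposal is correct. Your identification of the implicit hypothesis $R'M\subseteq M$ for the equality direction is accurate and worth flagging; in the applications in this paper (and in Nickel's) $M$ is always an $R'$-module, so this causes no trouble.
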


\begin{proposition} \label{different-graduated}
    {Suppose that $R$, $\mathscr D$, and $\Omega$ are as in \cref{sec:standard-form}, and let $\mathbf n$ and $\mathbf I$ be as in \cref{def:standard-form}.}
    The inverse different of a graduated order in standard form is
    \[\D(\LLambda(\mathbf n, \mathbf I)/R) = \LLambda\left(\mathbf n, \D(\Omega/R)\mathbf E * \mathbf I^{-,\top}\right),\]
    provided that all fractional ideals $I_{ij}$ are invertible.
\end{proposition}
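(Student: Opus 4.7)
The plan is to evaluate the trace pairing entrywise in block form and thereby reduce the problem to computing inverse differents of individual fractional $\Omega$-ideals.

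First, I recall that the reduced trace on $\mathscr A = M_n(\mathscr D)$ is the sum of the reduced traces of the diagonal entries: $\tr_{\mathscr A/\cent(\mathscr A)}(C) = \sum_{k=1}^n \tr_{\mathscr D/\cent(\mathscr D)}(c_{kk})$ for $C=(c_{k\ell})$. Composing with $\tr_{\cent(\mathscr A)/L}$ and expanding $(AB)_{kk} = \sum_\ell A_{k\ell} B_{\ell k}$ yields
\[\tr_{\mathscr A/L}(AB) = \sum_{k,\ell=1}^n \tr_{\mathscr D/L}(A_{k\ell} B_{\ell k})\]
for arbitrary $A=(A_{k\ell})$ and $B=(B_{k\ell})$ in $\mathscr A$. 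Let $\sigma\colon\{1,\ldots,n\}\to\{1,\ldots,t\}$ send each index to its block. Plugging in the single-entry matrices $\beta E_{\ell k}$ with $\beta\in I_{\sigma(\ell)\sigma(k)}$, which all lie in $\LLambda(\mathbf n,\mathbf I)$, and using bilinearity of the trace, the condition $A\in\D(\LLambda(\mathbf n,\mathbf I)/R)$ becomes equivalent to the entrywise requirement $A_{k\ell}\in\D(I_{\sigma(\ell)\sigma(k)}/R)$, with the inverse different taken in $\mathscr D$.

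The essential technical input is then the identity $\D(I/R) = \D(\Omega/R)\cdot I^{-1}$ for any invertible two-sided fractional $\Omega$-ideal $I\subset \mathscr D$. The inclusion ``$\supseteq$'' is immediate from $\tr_{\mathscr D/L}(\D(\Omega/R)\cdot I^{-1}\cdot I)\subseteq \tr_{\mathscr D/L}(\D(\Omega/R)\Omega)\subseteq R$. Conversely, if $\tr_{\mathscr D/L}(aI)\subseteq R$, then from $I=I\Omega=\Omega I$ and the cyclicity of the reduced trace one deduces $aI\subseteq \D(\Omega/R)$, so invertibility of $I$ gives $a\in \D(\Omega/R)\cdot I^{-1}$. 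This is where the skew-ring subtlety enters: it is essential that both $I$ and $\D(\Omega/R)$ are two-sided $\Omega$-ideals, so that the product $\D(\Omega/R)\cdot I^{-1}$ is unambiguous and the cyclicity argument goes through.

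Reassembling, the $(k,\ell)$-entry of $A$, corresponding to block position $(i,j)=(\sigma(k),\sigma(\ell))$, must lie in $\D(\Omega/R)\cdot I_{ji}^{-1}$, which is by construction the $(i,j)$-entry of $\D(\Omega/R)\mathbf E * \mathbf I^{-,\top}$. This proves $\D(\LLambda(\mathbf n,\mathbf I)/R)\subseteq \LLambda(\mathbf n,\D(\Omega/R)\mathbf E * \mathbf I^{-,\top})$, with the reverse inclusion following at once from the same entrywise computation. The main obstacle is the skew-ring identity $\D(I/R) = \D(\Omega/R)\cdot I^{-1}$; once that is in place, the block-level bookkeeping is routine, and the result is the two-dimensional analogue of the $1$-dimensional formula in \cite[Remark~II.4]{Plesken}.
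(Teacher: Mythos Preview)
Your argument is correct and coincides with the paper's second proof (attributed there to Nickel): both compute $\tr_{\mathscr A/L}(AB)$ entrywise, isolate individual entries by testing against elementary matrices $\beta E_{\ell k}$ with $\beta\in I_{\sigma(\ell)\sigma(k)}$, and reduce to the identity $\D(I/R)=\D(\Omega/R)\cdot I^{-1}$ for invertible two-sided $I$. The paper also offers a first, more structural proof that instead invokes the classification of two-sided fractional ideals of $\LLambda(\mathbf n,\mathbf I)$ (\cref{II.4.viii}) and the description of indecomposable projective lattices, pinning down the diagonal blocks $J_{ii}=\D(\Omega/R)$ and then arguing that any strictly larger fractional ideal must violate this on the diagonal; your direct route avoids that machinery entirely.
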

\begin{remark} \label{rem:invariants-of-grad-orders}
    A fractional ideal is invertible if any only if it is projective; in particular, this is always the case in hereditary rings such as Dedekind domains. The maximal ideal $(p,T)\subset\ZZ_p[[T]]$ is, on the other hand, not invertible. The existence of non-invertible ideals also presents an obstacle to the generalisation of Zassenhaus's structural invariants described in \cite[p.~6ff.]{Plesken-habil}, \cite[p.~10ff.]{Plesken} for {(complete) discrete valuation rings}. These invariants can be used to describe graduated orders uniquely -- as opposed to the standard form, which isn't unique, not even up to permutation \cite[6]{Plesken-habil}.
    Not aiming for an extensive study of the $(\cap,+)$-lattice of $\LLambda$-lattices, we can avoid the need to construct such a set of invariants.
\end{remark}

\begin{proof}[First proof of \cref{different-graduated}]
    The reduced trace depends only on the diagonal entries, so we have $\tr_{\mathscr A/L}(\LLambda(\mathbf n, \D(\Omega/R)\mathbf E * \mathbf I^{-,\top}))\subseteq R$. 
    The inverse different $\D(\LLambda(\mathbf n, \mathbf I)/R)$ is the largest fractional ideal of $\LLambda(\mathbf n, \mathbf I)$ such that 
    $\tr_{\mathscr A/L}(\D(\LLambda(\mathbf n, \mathbf I)/R))\subseteq R$,
    so it follows that 
    \[{\LLambda\left(\mathbf n, \D(\Omega/R)\mathbf E * \mathbf I^{-,\top}\right) \subseteq \D(\LLambda(\mathbf n, \mathbf I)/R).}\]
    
    By \cref{II.4.viii}, we have $\D(\LLambda(\mathbf n, \mathbf I)/R)=\LLambda(\mathbf n, \mathbf J)$ for some $\mathbf J=(J_{ij})$ satisfying conditions \ref{def:standard-form.i} and \ref{II.4.viii}. Therefore $J_{ii}$ is the largest fractional ideal of $\Omega$ such that $\tr_{\mathscr D/L}(J_{ii})\subseteq R$, and so $J_{ii}=\D(\Omega/R)$. Indeed, in the previous paragraph we have seen that $\D(\Omega/R)\subseteq J_{ii}$, and if there were some $x\in J_{ii}-\D(\Omega/R)$, then $\tr_{\mathscr A/L}(x \mathsf f_k)=\tr_{\mathscr D/L}(x)\notin R$ where $\sum_{r=1}^{i-1} n_r < k \le \sum_{r=1}^i n_r$.
    
    Let $\LLambda(\mathbf m,\mathbf K)$ be a fractional ideal of $\LLambda(\mathbf n,\mathbf I)$ such that $\LLambda(\mathbf n, \D(\Omega/R)\mathbf E * \mathbf I^{-,\top})\subsetneq \LLambda(\mathbf m,\mathbf K)$. Recall from \cref{II.4.viii} that all fractional ideals are of this form, and by taking a common refinement of $\mathbf m$ and $\mathbf n$, we may assume them to be equal.
    Since $(\D(\Omega/R)\mathbf E * \mathbf I^{-,\top})_{ii}=J_{ii}$, it follows that $J_{ii}\subseteq K_{ii}$ for all $i$.
    
    By definition, the left $\LLambda(\mathbf n,\mathbf I)$-lattice $\LLambda(\mathbf n, \D(\Omega/R)\mathbf E * \mathbf I^{-,\top})$ can be written as a direct sum:
    \[\LLambda\left(\mathbf n, \D(\Omega/R)\mathbf E * \mathbf I^{-,\top}\right) = \bigoplus_{i=1}^t \mathbb L\left(\D(\Omega/R)\mathbf E_i * {}_i\mathbf I\right)^{\oplus n_i}.\]
    The fractional ideal $\LLambda(\mathbf m,\mathbf K)$ can also be decomposed like this: it is the direct sum of indecomposable $\LLambda(\mathbf n,\mathbf I)$-lattices $L_i$ such that for all indices $1\le i\le t$, we have $L_i \supseteq \mathbb L(\D(\Omega/R)\mathbf E_i * {}_i\mathbf I)$, and there exists an index $i'$ for which $L_{i'}\supsetneq \mathbb L(\D(\Omega/R)\mathbf E_{i'} * {}_{i'}\mathbf I)$. Write $L_{i'}=\mathbb L(\mathbf k)$ for some $\mathbf k$ as in \cref{II.4.iii}.
    
    Since $L_{i'}$ is an proper $\LLambda(\mathbf n,\mathbf I)$-overlattice, we have $\mathbf k_j\supseteq \D(\Omega/R) \mathbf I_{ji'}^{-1}$ for all $1\le j\le t$, and $\mathbf k_{j'}\supsetneq \D(\Omega/R) \mathbf I_{j'i'}^{-1}$ for at least one $j'$. From the condition in \cref{II.4.iii}, it follows that $j'=i'$. Indeed, if $j'\ne i'$, then the condition $I_{i'j'} k_{j'} \subseteq k_{i'}=\D(\Omega/R)$ fails.

    Returning to the fractional ideal $\LLambda(\mathbf m,\mathbf K)$, this means that $K_{i'i'}\supsetneq J_{ii}$; in particular, $\LLambda(\mathbf m,\mathbf K)\not\subseteq \D(\LLambda(\mathbf n, \mathbf I)/R)=\LLambda(\mathbf n,\mathbf J)$.
    So the containment $\LLambda(\mathbf n, \D(\Omega/R)\mathbf E * \mathbf I^{-,\top}) \subseteq \D(\LLambda(\mathbf n, \mathbf I)/R)$ is an equality, as claimed.
\end{proof}

The author {would like to thank} Andreas Nickel for pointing out that \cref{different-graduated} can also be proven directly from the definitions:

\begin{proof}[Second proof of \cref{different-graduated}]
We first show the inclusion $\D(\LLambda(\mathbf n, \mathbf I)/R) \supseteq \LLambda\left(\mathbf n, \D(\Omega/R)\mathbf E * \mathbf I^{-,\top}\right)$. This is equivalent to verifying the containment
\[\tr_{\mathscr A/L}\left(\LLambda(\mathbf n, \mathbf I) \cdot \LLambda\left(\mathbf n, \D(\Omega/R)\mathbf E * \mathbf I^{-,\top}\right)\right) \subseteq R.\]
Let $A\in \LLambda(\mathbf n, \mathbf I)$ and $B\in \LLambda\left(\mathbf n, \D(\Omega/R)\mathbf E * \mathbf I^{-,\top}\right)$, and write $AB=(c_{k\ell})_{1\le k,\ell\le n}$. Then $\tr_{\mathscr A/L}(AB)=\sum_{k=1}^n \tr_{\mathscr D/L}(c_{kk})$. As $c_{kk}$ is a diagonal entry of $AB$, it is the sum of elements in $I_{ij} \D(\Omega/R) I^{-1}_{ij} = \D(\Omega/R)$ where $i$ is such that $\sum_{r=1}^{i-1} n_r < k \le \sum_{r=1}^i n_r$ and $1\le j\le t$. Hence $c_{kk}\in\D(\Omega/R)$, and thus $\tr_{\mathscr A/L}(AB)\in R$, as desired.

For the reverse inclusion, let $A=(a_{k\ell}\in \D(\LLambda(\mathbf n,\mathbf I)/R)$. We show that $a_{k\ell}\in \D(\Omega/R)\cdot I_{ji}^{-1}$, where $\sum_{r=1}^{i-1} n_r < k \le \sum_{r=1}^i n_r$ and $\sum_{r=1}^{j-1} n_r< \ell \le \sum_{r=1}^j n_r$, that is, $a_{k\ell}$ is in the $(i,j)$-block. Let $P_{k\ell}$ be a transposition matrix such that $AP_{k,\ell}$ is the matrix $A$ with its $k$th and $\ell$th columns swapped, so that the $k$th diagonal entry of $AP_{k,\ell}$ is $a_{k\ell}$. Let $N_k$ be a matrix whose single nonzero entry is $1$, in the $k$th diagonal entry, so that $AP_{k\ell}N_k$ has $k$th column identical to that of $AP_{k\ell}$ and zeros elsewhere. Let $\omega \in I_{ij}$: then $\tr_{\mathscr A/L}(AP_{k,\ell}N_k\omega)=\tr_{\mathscr D/L}(a_{k\ell}\omega)\in R$ by the assumption on $A$. Hence $a_{k\ell}\in \D(\Omega/R)\cdot I_{ji}^{-1}$, as claimed.
\end{proof}

\subsubsection{Intersections of maximal orders} \phantom-\medskip

\noindent
In the {one}-dimensional case, one can show that graduated orders are essentially intersections of maximal orders; this uses principality and invertability of ideals of $\Omega$ \cite[Theorem~II.8]{Plesken}. In dimension $2$, a slightly weaker statement is true.

\begin{example} \label{ex:intersections}
    Consider the setup in \cref{sec:hereditary}, and let $\mathbf n=(1,1)$ for simplicity. Then $M_2(\Omega)\subset M_2(\mathscr D)$ is a maximal order. By \cref{lem:all-conjugates-maximal}, the maximal orders in $M_2(\mathscr D)$ are precisely the conjugates of this order. In particular, for all $0\ne d\in \Omega$, the following is a graduated order in standard form:
    \begin{equation} \label{eq:intersection-principal}
        \begin{pmatrix} d & \\ & 1 \end{pmatrix} \begin{pmatrix} \Omega & \Omega \\ \Omega & \Omega \end{pmatrix}\begin{pmatrix} d^{-1} & \\ & 1 \end{pmatrix} \cap \begin{pmatrix} \Omega & \Omega \\ \Omega & \Omega \end{pmatrix} = \begin{pmatrix} \Omega & d\Omega \\ d^{-1}\Omega & \Omega \end{pmatrix} \cap \begin{pmatrix} \Omega & \Omega \\ \Omega & \Omega \end{pmatrix} = \begin{pmatrix} \Omega & d\Omega \\ \Omega & \Omega \end{pmatrix}.
    \end{equation}
\end{example}
\begin{example}
    Not all graduated orders arise as an intersection of maximal orders. Consider the case $k=K=D$, so that $\Omega=\OO_K[[T]]$ is commutative.
    We claim that the graduated order $\LLambda=\begin{psmallmatrix} \Omega & \mm_\Omega \\ \Omega & \Omega \end{psmallmatrix}$ is not an intersection of maximal orders. 
    
    Suppose it is: then it is contained in a maximal order $A M_2(\Omega) A^{-1}$ for some $A=\begin{psmallmatrix} a&b\\c&d \end{psmallmatrix}\in M_2(\Omega)$ with $ad-bc\ne0$. Let $\pi\in \OO_K$ be a uniformiser. By the Weierstraß preparation theorem \cite[Theorem~5.3.4]{NSW}, each entry $*\in\{a,b,c,d\}$ of $A$ is of the form $\pi^{e_*} P_*(T) u_*(T)$, where $e_*\ge 0$ is an integer, $P_*(T)\in\OO_K[T]$ is a Weierstraß polynomial, and $u_*(T)\in\Omega^\times$ is a unit.
    Without loss of generality, we may assume that $\min\{e_a,e_b,e_c,e_d\}=0$ and $\gcd(P_a,P_b,P_c,P_d)=1$. Furthermore, write $\det(A)=\pi^e P(T) u(T)$ with $e$, $P$, $u$ as above. Let $Q(T)\in\OO_K[T]$ be a Weierstraß polynomial coprime to $P(T)$, so that $\gcd(\det(A),Q(T))=1$. 

    We claim that $\det(A)\in\Omega^\times$.
    Let $\begin{psmallmatrix} x & y \\ z & w\end{psmallmatrix}\in\LLambda$. Then $\LLambda\subseteq A M_2(\Omega) A^{-1}$ implies that
    \[A^{-1} \begin{pmatrix} x & y \\ z & w\end{pmatrix} A = \frac{1}{\det A} \begin{pmatrix} adx+cdy-abz-bcw & bdx+d^2y-b^2z-bdw \\ -acx-c^2y+a^2z+acw & -bcx-cdy+abz+adw \end{pmatrix} \in M_2(\Omega).\]
    Set $y\colonequals Q(T)$ and set the rest of the variables to be zero, or $z\colonequals1$ and the rest of them zero.
    Since the matrix above must be in $M_2(\Omega)$, this gives us the following divisibilities:
    \begin{align*}
        y=Q(T),\, x=z=w=0 &\implies \det(A) \mid cdQ(T),d^2Q(T),c^2Q(T) \implies \det(A) \mid cd,d^2,c^2; \\
        z=1,\, x=y=w=0 &\implies \det(A) \mid ab,b^2,a^2.
    \end{align*}
    In the first line, the second implication is because $\gcd(\det(A),Q(T))=1$.
    In particular, we have
    \[\det(A)\mid a^2,b^2,c^2,d^2.\]
    If $e>0$, then this shows $\pi\mid a,b,c,d$, which contradicts the assumption $\min\{e_a,e_b,e_c,e_d\}=0$, so we get that $e=0$. Similarly, we must have $P(T)=1$, otherwise $\gcd(P_a,P_b,P_c,P_d)=1$ fails. So $\det(A)$ is a unit, as claimed.

    The maximal order $A M_2(\Omega) A^{-1}$ consists of the following matrices, with $x,y,z,w\in\Omega$:
    \[A \begin{pmatrix} x & y \\ z & w \end{pmatrix} A^{-1}=\frac{1}{\det A}\begin{pmatrix} adx -acy + bdz -bcw & a^2y -ab(x+w) - b^2z \\ -c^2y+ cd(x-w) + d^2z & -bcx + acy - bdz+adw\end{pmatrix}.\]
    Since $\det(A)\in\Omega^2$, this shows that the top right entries of matrices in $A M_2(\Omega) A^{-1}$ run through the ideal $(a^2,ab,b^2)=(a,b)^2$. Now if 
    \[\LLambda=\bigcap_{i=1}^r A_i M_2(\Omega) A_i^{-1}\]
    with $A_i=\begin{psmallmatrix} a_i & b_i \\ c_i & d_i\end{psmallmatrix}\in M_2(\Omega)\cap \GL_2(\mathscr D)$ as above, then the elements in the top right corner run through the ideal
    \[\bigcap_{i=1}^r (a_i,b_i)^2=\left(\bigcap_{i=1}^r (a_i,b_i)\right)^2.\]
    This is the square of an ideal in $\Omega$. But $\mm$ is not a square, so $\LLambda$ cannot be written as an intersection of maximal orders. \qed
\end{example}

\begin{proposition}
    {Suppose that $R$, $\mathscr D$, and $\Omega$ are as in \cref{sec:standard-form}, let $\mm_\Omega$ be as in \cref{rem:Omega-local}, and let $\mathbf n$ and $\mathbf I$ be as in \cref{def:standard-form}. Assume that $R$ is complete in the $\mm_R$-adic topology, where $\mm_R$ is the maximal ideal of $R$.}
    Then an $R$-order $\LLambda\subset \mathscr A=M_n(\mathscr D)$ is graduated if
    \begin{enumerate}
        \item $\LLambda/\Jac(\LLambda)\simeq \bigoplus_{i=1}^t M_{n_i}(\Omega/\mm_\Omega)$ with $\sum_{i=1}^t n_i=n$, and
        \item $\LLambda$ arises as a finite intersection of maximal $R$-orders in $\mathscr A$.
    \end{enumerate}
    More precisely, $\LLambda\subset\mathscr A$ is a graduated $R$-order if and only if
    \begin{enumerate}
        \item $\LLambda/\Jac(\LLambda)\simeq \bigoplus_{i=1}^t M_{n_i}(\Omega/\mm_\Omega)$ with $\sum_{i=1}^t n_i=n$, and
        \item $\LLambda$ contains a graduated order $\LLambda'\subseteq \LLambda$ that arises as an intersection of maximal $R$-orders in $\mathscr A$.
    \end{enumerate}
\end{proposition}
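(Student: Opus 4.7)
The plan is to prove both formulations using the unifying strategy of producing orthogonal indecomposable idempotents of $\LLambda$ that witness it as graduated. Starting with the \emph{necessity} in the if-and-only-if, assume $\LLambda$ is graduated. Then \cref{every-standard-form} lets me identify $\LLambda$ with a standard-form order $\LLambda(\mathbf n, \mathbf I)$, so condition (1) is immediate from \ref{II.4.ii}. For (2), \cref{lem:principalisation} yields a sub-graduated order $\LLambda(\mathbf n, \mathbf I') \subseteq \LLambda$ with $I'_{ii} = \Omega$ and $I'_{ij} = x\Omega$ for $i \neq j$, where $x \in \mm_\Omega$ is fixed. I would then realise $\LLambda(\mathbf n, \mathbf I')$ as an intersection of maximal orders: for each $1 \le i \le t$, let $D_i \in \GL_n(\mathscr D)$ be the diagonal matrix with entry $x$ in the positions of block $i$ and $1$ elsewhere; an entry-wise verification analogous to \cref{ex:intersections} gives $\LLambda(\mathbf n, \mathbf I') = \bigcap_{i=1}^t D_i M_n(\Omega) D_i^{-1}$.

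For the \emph{sufficiency} in the if-and-only-if, I would conjugate so that $\LLambda' \subseteq \LLambda$ is in standard form, with orthogonal indecomposable idempotents $\mathsf f_k = E_{kk}$ for $k = 1, \ldots, n$. These also lie in $\LLambda$ and sum to $1$; each satisfies $\mathsf f_k \mathscr A \mathsf f_k \simeq \mathscr D$ and $\mathsf f_k \LLambda' \mathsf f_k = \Omega$. Since $\mathsf f_k \LLambda \mathsf f_k$ is an $R$-order in $\mathscr D$ containing $\Omega$, uniqueness of the maximal $R$-order forces $\mathsf f_k \LLambda \mathsf f_k = \Omega$, which is maximal; the local nature of $\Omega$ then yields primitivity of $\mathsf f_k$ in $\LLambda$, so $\LLambda$ is graduated.

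For the \emph{sufficiency in the first formulation}, where $\LLambda = \bigcap_j A_j M_n(\Omega) A_j^{-1}$ is itself an intersection of maximal orders, the idempotents must be produced differently: condition (1) provides $n$ primitive orthogonal idempotents in $\bar\LLambda = \LLambda/\Jac(\LLambda) \simeq \bigoplus_i M_{n_i}(\Omega/\mm_\Omega)$, which I would lift to orthogonal $\mathsf f_1, \ldots, \mathsf f_n \in \LLambda$ summing to $1$. A matrix-rank count ($\sum_k \mathrm{rank}_{\mathscr D}(\mathsf f_k) = n$, each summand at least $1$) forces every $\mathsf f_k$ to have rank one, so $\mathsf f_k \mathscr A \mathsf f_k \simeq \mathscr D$. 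For each $j$, since $\Omega$ is local and hence rank-$1$ projective $\Omega$-modules are free, the corner $\mathsf f_k (A_j M_n(\Omega) A_j^{-1}) \mathsf f_k \simeq \End_\Omega(\mathsf f_k \Omega^n) \simeq \Omega$; intersecting over $j$ gives $\mathsf f_k \LLambda \mathsf f_k = \Omega$, and the rest proceeds as in the previous paragraph. The main obstacle is the idempotent-lifting step, which requires $\LLambda$ to be complete with respect to $\Jac(\LLambda)$---automatic in the target application, where $R$ is a complete regular local ring of dimension two; the remaining ingredients are routine matrix/ideal computations grounded in the local nature of $\Omega$ and the uniqueness of maximal $R$-orders in $\mathscr D$.
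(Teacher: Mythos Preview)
Your proposal is correct and follows essentially the same strategy as the paper: for necessity you invoke \cref{every-standard-form}, \cref{II.4.ii}, and \cref{lem:principalisation} exactly as the paper does (your intersection $\bigcap_i D_i M_n(\Omega) D_i^{-1}$ is a minor variant of the paper's $\bigcap_{i,j}\LLambda(\mathbf n,\mathbf A(i,j))$, both modelled on \cref{ex:intersections}); for sufficiency of the iff you reprove in-line what the paper obtains by citing \cref{lem:contained-graduated-order}, and for the first formulation you spell out the idempotent-lifting argument that the paper defers to Plesken. Your explicit flagging of the completeness hypothesis needed for idempotent lifting is a useful clarification that the paper leaves implicit.
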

\begin{proof}
    The first statement follows directly from \cref{def:graduated}, exactly as in \cite[`(iv) $\Rightarrow$ (i)', p.~20]{Plesken}; {this is where the completeness hypothesis is used}.
    For the second statement, first suppose that $\LLambda$ is graduated. By \cref{every-standard-form}, we may assume $\LLambda$ to be of standard form; then it is clear from \cref{II.4.ii} that $\LLambda/\Jac(\LLambda)$ is as in the statement. Now let $\LLambda'=\LLambda(\mathbf n,\mathbb I')\subseteq\LLambda$ be as in \cref{lem:principalisation}. Then 
    \[\LLambda(\mathbf n,\mathbf I')=\bigcap_{1\le i,j\le n} \LLambda(\mathbf n,\mathbf A(i,j)),\]
    where for $1\le k,\ell\le n$,
    \[\mathbf A(i,j)_{k,\ell}\colonequals\begin{cases}
        I'_{ij} & \text{if $i=k$ and $j=\ell$}, \\
        \Omega & \text{otherwise}.
    \end{cases}\]
    As in \eqref{eq:intersection-principal}, the orders $\LLambda(\mathbf n,\mathbf A(i,j))$ can be written as intersections of maximal orders. It follows that $\LLambda'$ is an intersection of finitely many maximal orders.
    Conversely, if $\LLambda'$ is a graduated order contained in $\LLambda$, then $\LLambda$ is graduated by \cref{lem:contained-graduated-order}.
\end{proof}

\subsubsection{The $(\cap,+)$-lattice of $\LLambda$-lattices} \phantom-\medskip

\noindent
{Suppose that $R$ and $\mathscr D$ are as in \cref{sec:standard-form}.}
Let $\LLambda\subset \mathscr A=M_n(\mathscr D)$ be an $R$-order and $V$ an irreducible $\mathscr A$-module.
As before, let $\mathfrak Z(V)$ denote the set of nonzero left $\LLambda$-lattices in $V$.
The set $\mathfrak Z(V)$ is a $(\cap,+)$-lattice; we refer to \cite[\S2]{DaveyPriestley} for background on the theory of $(\cap,+)$-lattices.
In \cite[§II.c]{Plesken}, a Galois correspondence is established between graduated $R$-orders in $\mathscr A$ containing $\LLambda$ and so-called admissible distributive sublattices of $\mathfrak Z(V)$ in the {one}-dimensional case. One direction of this correspondence is given by 
$\GGamma \mapsto \{\text{nonzero $\GGamma$-lattices in $V$}\}$.

This approach, however, fails in dimension $2$. For this, recall that the $\mathbf M_3$--$\mathbf N_5$ theorem \cite[\nopp 4.10]{DaveyPriestley} shows that a $(\cap,+)$-lattice containing the pentagon $\mathbf N_5$ as a $(\cap,+)$-sublattice fails to be distributive (actually, it isn't even modular).

\begin{example}
    Consider the setup of \cref{sec:hereditary}. For simplicity, let $d=1$, so that $\mathscr D=K$ and $\Omega\colonequals\OO_k[[T]]$, where $k/\QQ_p$ is a finite extension, and let $n=1$, so that $\mathscr A=D=\Frac(\Omega)\simeq V$. Then $\LLambda\colonequals\GGamma\colonequals\Omega$ is {a maximal $R$-order} in $\mathscr A$, so in particular, it is graduated. The $\Omega$-lattices in $\mathscr A$ are then the nonzero fractional ideals of $\Omega$.
    This contains $\mathbf N_5$ as a $(\cap,+)$-sublattice:
    \[\begin{tikzcd}[row sep=0mm]
	& {(p,T)} \\
	{(p,T^2)} \\
	&& {(p^2,T)} \\
	{(p,T^4)} \\
	& {(p^2,T^4)}
	\arrow[no head, from=1-2, to=2-1]
	\arrow[no head, from=1-2, to=3-3]
	\arrow[no head, from=2-1, to=4-1]
	\arrow[no head, from=3-3, to=5-2]
	\arrow[no head, from=4-1, to=5-2]
    \end{tikzcd}\]
    This is an example of the general fact that a noetherian integral domain has distributive $(\cap,+)$-lattice of ideals iff it is a Dedekind domain, which $\OO_k[[T]]$ is not. Note that this fact further generalises to noncommutative domains \cite{Brungs}.
\end{example}

\subsubsection{Extremal orders} \phantom-\medskip

\noindent
{Let $R$ be a noetherian integral domain with field of fractions $L=\Frac(R)$, and let $\mathscr A$ be a separable $L$-algebra.}

\begin{definition}
    Let $\LLambda, \GGamma\subset \mathscr A$ be $R$-orders in the separable $L$-algebra $\mathscr A$. Then $\GGamma$ \emph{radically covers} $\LLambda$ if $\GGamma\supseteq \LLambda$ and $\Jac(\GGamma)\supseteq\Jac(\LLambda)$. We write $\GGamma\succeq\LLambda$ in this case. The order $\LLambda$ is called \emph{extremal} if for all $R$-orders $\GGamma$, $\GGamma\succeq\LLambda$ implies $\GGamma=\LLambda$.
\end{definition}
{When $R$ is a complete discrete valuation ring,} extremal orders are precisely the hereditary ones, see \cite[\S39]{MO}. {In fact, the completeness hypothesis can be relaxed, see \cite[364]{MO}.} 

Every extremal order is the direct sum of summands that are extremal orders in the simple components of $\mathscr A$, and conversely, every extremal order arises in this way: this is \cite[Theorem~39.13]{MO}, and the proof given there works whenever $R$ is a commutative local ring. In op.cit., the theory of extremal orders is built up from 39.2--39.10 in full generality, and hence applies in our setting without further modification. We readily obtain the following:

\begin{proposition} \label{extremal-orders}
    {Suppose that $R$, $\mathscr D$, and $\Omega$ are as in \cref{sec:standard-form}, and let $\mm_\Omega$ be as in \cref{rem:Omega-local}.}
    Then every extremal $R$-order $\LLambda\subset\mathscr A=M_n(\mathscr D)$ is isomorphic to some $\LLambda(\mathbf n, \mathbf I)$ with
    \begin{equation} \label{eq:shape-of-I}
        \mathbf I=\begin{psmallmatrix}
        \Omega & \mm_\Omega & \mm_\Omega & \dots & \mm_\Omega \\
        \Omega & \Omega & \mm_\Omega & \dots & \mm_\Omega \\
        \vdots & \vdots & \ddots & \ddots & \vdots \\
        \Omega & \Omega & \dots & \Omega & \mm_\Omega \\
        \Omega & \Omega & \dots & \Omega & \Omega \\
    \end{psmallmatrix}.
    \end{equation}
    In particular, extremal orders are graduated. Conversely, any $R$-order $\LLambda(\mathbf n, \mathbf I)$ with $\mathbf I$ as above is extremal.
\end{proposition}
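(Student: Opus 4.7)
The plan follows Plesken's strategy in the $1$-dimensional case \cite[Theorem~II.6]{Plesken}, adapted to avoid principality of $\mm_\Omega$ via the explicit characterisation of extremality by radical covers. By \cite[Theorem~39.13]{MO}, whose proof applies in our generality per the discussion preceding the proposition, an extremal order decomposes as a direct sum of extremal orders in the simple components of $\mathscr A$, so I may reduce to $\mathscr A=M_n(\mathscr D)$. The semisimple quotient property of extremal orders, together with idempotent lifting in the local ring $R$, yields orthogonal indecomposable idempotents $\e_1,\ldots,\e_t\in\LLambda$ summing to $1$, with each $\e_i\LLambda\e_i$ extremal in the skew field $\e_i\mathscr A\e_i\simeq\mathscr D$ and hence equal to its unique maximal order $\Omega$. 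Thus $\LLambda$ is graduated, and by \cref{every-standard-form} I may take $\LLambda=\LLambda(\mathbf n,\mathbf I)$ in standard form.

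I next determine the shape of $\mathbf I$, using that $\LLambda$ is extremal if and only if $\LLambda = O_\ell(\Jac(\LLambda)) \colonequals \{x\in\mathscr A:x\cdot\Jac(\LLambda)\subseteq\Jac(\LLambda)\}$, since $O_\ell(\Jac(\LLambda))$ is always a radical cover of $\LLambda$. If some $I_{ij}\subsetneq\mm_\Omega$, then the order $\LLambda(\mathbf n,\mathbf I')$, where $\mathbf I'$ arises from $\mathbf I$ by replacing every proper entry with $\mm_\Omega$, is again of standard form (conditions \ref{def:standard-form.i}--\ref{def:standard-form.iii} survive because $\mm_\Omega\cdot\mathfrak a\subseteq\mm_\Omega$ for any $\Omega$-ideal $\mathfrak a$) and is a strictly larger radical cover, contradicting extremality; so $I_{ij}\in\{\Omega,\mm_\Omega\}$ throughout. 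Declaring $i\to j$ to mean $I_{ij}=\Omega$ now defines a partial order on $\{1,\ldots,t\}$ by \ref{def:standard-form.i} and \ref{def:standard-form.iii}. If this order is not total, I extend it to a total order $\preceq$ and replace $\mathbf I$ with the matrix $\mathbf I'$ whose entries are $\Omega$ for $i\succeq j$ and $\mm_\Omega$ for $i\prec j$; one checks that $\mathbf I'$ is a valid standard form properly containing $\mathbf I$, giving yet another contradiction. Relabelling blocks by this total order puts $\mathbf I$ in the shape \eqref{eq:shape-of-I}.

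For the converse, I compute $O_\ell(\Jac(\LLambda))$ for $\LLambda(\mathbf n,\mathbf I)$ with $\mathbf I$ as in \eqref{eq:shape-of-I}. A block-wise expansion of $(\mathbf A\cdot\Jac(\LLambda))_{ik}\subseteq\Jac(\LLambda)_{ik}$ in the non-trivial case $i\leq k$ yields, for each summation index $j$, the constraint $A_{ij}\cdot\Jac(\LLambda)_{jk}\subseteq\mm_\Omega$. Intersecting these constraints over all $k\geq i$ forces the entries of $A_{ij}$ to lie in $\mm_\Omega$ for $j>i$ (from terms with $j>k$) and in $\Omega$ for $j\leq i$ (from terms with $j\leq k$, using only $(\mm_\Omega:\mm_\Omega)=\Omega$, which follows from the maximality of $\Omega$ in $\mathscr D$). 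These match exactly the entries of $\mathbf I$, so $O_\ell(\Jac(\LLambda))=\LLambda$ and $\LLambda$ is extremal. The main obstacle is the ``total order'' step in paragraph~2: in the $1$-dimensional case one has access to explicit projective chains built on the principal generator of $\mm_\Omega$, but these are unavailable here, so the extension of $\to$ to a total order and the explicit construction of the radical cover $\mathbf I'$ is the key adaptation.
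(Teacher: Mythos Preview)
Your argument has a circular step in the forward direction and an unjustified equivalence in the converse.

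\textbf{Forward direction.} You assert that the lifted indecomposable idempotents $\e_i$ satisfy $\e_i\mathscr A\e_i\simeq\mathscr D$ with $\e_i\LLambda\e_i$ extremal there. But indecomposability of $\e_i$ in $\LLambda$ does not force $\e_i$ to have rank~$1$ in $M_n(\mathscr D)$: a priori $\e_i\mathscr A\e_i\simeq M_k(\mathscr D)$ for some $k\ge 1$, and establishing $k=1$ is essentially the content of the proposition. Nor is it clear that corners of extremal orders are again extremal---the naive Peirce-decomposition extension of a radical cover of $\e_i\LLambda\e_i$ to one of $\LLambda$ need not be closed under multiplication, since $(1-\e_i)\LLambda\e_i$ is not obviously stable under right multiplication by the enlarged corner. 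The paper avoids this circularity by fixing a maximal overorder $\GGamma\supseteq\LLambda$, passing to $\overline\LLambda\le\GGamma/\Jac(\GGamma)\simeq M_n(\Omega/\mm_\Omega)$, and invoking the explicit classification of extremal subrings of a matrix ring over a field \cite[\nopp 39.6, 39.10]{MO}; this produces the rank-$1$ idempotents directly. Once graduatedness is established, your second paragraph---forcing $I_{ij}\in\{\Omega,\mm_\Omega\}$ via the radical cover $\mathbf I'$, then totalising the induced partial order $i\to j$---is correct and is a pleasant alternative to citing Reiner's residue-field classification.

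\textbf{Converse.} You claim $\LLambda$ is extremal if and only if $\LLambda=O_\ell(\Jac\LLambda)$. Only the forward implication follows from \cite[\nopp 39.11]{MO}; for the direction you actually need, observe that for an arbitrary radical cover $\GGamma\succeq\LLambda$ one only gets $\GGamma\cdot\Jac(\LLambda)\subseteq\Jac(\GGamma)$, not $\subseteq\Jac(\LLambda)$, so $\GGamma\subseteq O_\ell(\Jac\LLambda)$ does not follow. The paper instead takes an arbitrary $\GGamma\succeq\LLambda$, notes that each $\mathsf f_i\GGamma\mathsf f_i\subset\mathscr D$ is an order containing $\mathsf f_i\LLambda\mathsf f_i=\Omega$ and hence equals $\Omega$, so $\GGamma$ is itself in standard form; then $\Jac(\LLambda)\subseteq\Jac(\GGamma)$ together with \cref{II.4.i} forces $\LLambda=\GGamma$. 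Your block-wise computation can be recycled for this last step once $\GGamma$ is known to be in standard form.
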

\begin{proof}
    We only need to modify the proof of \cite[Theorem~39.14]{MO} at a few places.
    
    First let $\LLambda\subset \mathscr A$ be an extremal $R$-order, and let $\LLambda\subseteq\GGamma\subset\mathscr A$ be a maximal overorder. It follows that $\Jac(\GGamma)\subseteq \Jac(\LLambda)$, so $\overline\LLambda\colonequals\LLambda/\Jac(\LLambda)\le \overline\GGamma\colonequals \GGamma/\Jac(\GGamma)$ is a subring. In Reiner's book, this is proven through a series of exercises (he provides sketches of proofs), and the proofs work in our setting as well.
    
    Every maximal order in $\mathscr A$ is the $\Omega$-endomorphism ring of a full free right $\Omega$-lattice $L$: in the {one}-dimensional case, this is \cite[Corollary~17.4]{MO} {for complete {discrete valuation} rings, and the completeness hypothesis can be removed by \cite[Remark~18.8(ii)]{MO}.} In {the two-dimensional} setting, the same proof works by using \cref{lem:all-conjugates-maximal} for the description of maximal orders. So $\GGamma\simeq \End_\Omega(L)$, which allows one to identify $\GGamma/\Jac(\GGamma)\simeq \End_{\Omega/\mm_\Omega}(L/\mm_\Omega L)\simeq M_n(\Omega/\mm_\Omega)$, and show that $\overline\LLambda\le M_n(\Omega/\mm_\Omega)$ is an extremal subring by Reiner's argument. Extremal subrings of $M_n(\Omega/\mm_\Omega)$ are described explicitly in \cite[\nopp 39.6 \& 39.10]{MO}, and lifting back to $\GGamma$ shows the first assertion. The second assertion follows immediately.

    Conversely, suppose that $\LLambda\colonequals\LLambda(\mathbf n, \mathbf I)\preceq \GGamma$ for some $R$-order $\GGamma\subset\mathscr A$.
    For all $1\le i\le n$, $\mathsf f_i \GGamma\mathsf f_i\subset \mathsf f_i \mathscr A\mathsf f_i= \mathscr D$ is an $R$-order. 
    Since $\LLambda\subseteq\GGamma$ and {since $\Omega$ is a maximal $R$-order in $\mathscr D$}, it follows that $\mathsf f_i\LLambda\mathsf f_i\subseteq \mathsf f_i \GGamma\mathsf f_i=\Omega$. So $\GGamma$ is a graduated order with respect to the indecomposable idempotents $\mathsf f_i$, that is, $\GGamma$ is a graduated $R$-order in standard form (\cref{rem:graduated-wrt-f}).
    Now it follows from $\Jac(\LLambda)\subseteq\Jac(\GGamma)$ and \cref{II.4.i} that $\LLambda=\GGamma$. 
\end{proof}

\begin{remark}
    {It follows that hereditary orders are graduated. By \cite[Theorem~10.8]{MO}, it suffices to treat the case when $\mathscr A$ is simple. In this case, hereditary orders are extremal by \cite[Corollary~39.12]{MO}, and extremal orders are graduated by \cref{extremal-orders}.}
\end{remark}

\begin{corollary} \label{no-hereditary-orders}
    {Retain the setup of \cref{extremal-orders}. If $R$ has non-principal maximal ideal $\mm_R$, and $R$ is complete in the $\mm_R$-adic topology, then there are no hereditary $R$-orders in $\mathscr A=M_n(\mathscr D)$.}
\end{corollary}
\begin{proof}
    Let $\LLambda\subset \mathscr A$ be an $R$-order. Then $\LLambda$ is hereditary if any only if $\Jac(\LLambda)$ is an invertible $(\LLambda,\LLambda)$-bimodule: this is shown in Step~2 in the proof of \cite[Theorem~39.1]{MO}; {this is where the assumption on completeness is used}. Furthermore, it follows from loc.cit. that hereditary orders are always extremal, see \cite[Corollary~39.12]{MO}. So it suffices to show that $\LLambda(\mathbf n, \mathbf I)$ has non-invertible Jacobson radical, where $\mathbf I$ is as in \cref{extremal-orders}. By \cref{II.4.i}, we have $\Jac(\LLambda(\mathbf n, \mathbf I))=\LLambda(\mathbf n, \mathbf I*\mm_\Omega\mathbf 1)$. 
    
    Recall the following general fact: in a ring $\mathfrak R$ with finitely many two-sided maximal ideals $\mm_1,\ldots,\mm_k$, a two-sided ideal $\mathfrak I$ is invertible if and only if it is principal and generated by a non-zero-divisor. For a proof, see \cite[Theorem~60]{Kaplansky}; note that commutativity of $\mathfrak R$ is not necessary.
    
    The matrix ring $M_n(\Omega)$ has only finitely many maximal ideals, so the previous fact is applicable. It remains to show that the two-sided ideal $\LLambda(\mathbf n, \mathbf I*\mm_\Omega\mathbf 1)$ isn't principal. Suppose it is, so $\LLambda(\mathbf n, \mathbf I*\mm_\Omega\mathbf 1)=X \LLambda(\mathbf n, \mathbf I)$ for some $X\in \LLambda(\mathbf n, \mathbf I)$. Consider the ideal generated by the reduced norms of its elements:
    \begin{equation} \label{eq:norms-L}
        \nr_{\mathscr A/L}\left(\LLambda(\mathbf n, \mathbf I*\mm_\Omega\mathbf 1)\right) = \nr_{\mathscr A/L}\left(X \LLambda(\mathbf n, \mathbf I)\right) = \nr_{\mathscr A/L}(X) \cdot \nr_{\mathscr A/L}\left(\LLambda(\mathbf n, \mathbf I)\right).
    \end{equation}
    We have $\nr_{\mathscr A/L}\left(\LLambda(\mathbf n, \mathbf I)\right)=R$: indeed, $\LLambda(\mathbf n, \mathbf I)$ is an order, hence the reduced norms of its elements are in $R$ by \cite[Theorem~10.1]{MO}; and on the other hand, since it's a graduated order, it contains all diagonal matrices $\diag(r,1,\ldots,1)$ with $r\in R$, and we have $\nr_{\mathscr A/L}(\diag(r,1,\ldots,1))=r$. So the right hand side of \eqref{eq:norms-L} is a principal ideal of $R$.

    Since $\mathbf I$ is of the form \eqref{eq:shape-of-I}, every matrix in $\LLambda(\mathbf n, \mathbf I*\mm_\Omega\mathbf 1)$ has reduced norm in $\mm_R$. On the other hand, taking the reduced norm of diagonal matrices in $\LLambda(\mathbf n, \mathbf I*\mm_\Omega\mathbf 1)\cap M_n(R)$ with non-zero diagonal entries, we see that $\nr_{\mathscr A/L}\left(\LLambda(\mathbf n, \mathbf I*\mm_\Omega\mathbf 1)\right)$ contains the non-principal ideal $\mm_R^n$, and hence cannot be principal.
\end{proof}

\subsubsection{Graduated hulls} \phantom-\medskip

\noindent
{Let $R$ be a noetherian integral domain with field of fractions $L=\Frac(R)$, and let $\mathscr A$ be a separable $L$-algebra.}

\begin{definition}
    Let $\LLambda\subset \mathscr A$ be an $R$-order in a separable $L$-algebra $\mathscr A$. A \emph{graduated hull} of $\LLambda$ is a graduated $R$-order $\GGamma\subset \mathscr A$ such that $\LLambda\preceq\GGamma$ and such that if $\GGamma'\subset\mathscr A$ is another $R$-order with $\LLambda\preceq\GGamma'$, then $\GGamma\subseteq\GGamma'$.
\end{definition}
\begin{lemma} \label{lem:intersections}
    {Suppose that $R$, $\mathscr D$, and $\Omega$ are as in \cref{sec:standard-form}, let $\mm_\Omega$ be as in \cref{rem:Omega-local}, and let $\mathbf n_1,\mathbf n_2$ and $\mathbf I_1,\mathbf I_2$ be as in \cref{def:standard-form}.}
    Graduated orders in standard form admit the following properties:
    \begin{enumerate}
        \item $\LLambda(\mathbf n,\mathbf I_1) \cap \LLambda(\mathbf n,\mathbf I_2)=\LLambda(\mathbf n, \mathbf I_1\cap\mathbf I_2)$;
        \item $\Jac(\LLambda(\mathbf n,\mathbf I_1)) \cap \Jac(\LLambda(\mathbf n,\mathbf I_2))=\Jac(\LLambda(\mathbf n, \mathbf I_1\cap\mathbf I_2))$;
        \item $\LLambda(\mathbf n_1,\mathbf I_1) \cap \LLambda(\mathbf n_2,\mathbf I_2)=\LLambda(\mathbf n_1\cap \mathbf n_2,\mathbf I_1\cap \mathbf I_2)$;
        \item $\Jac(\LLambda(\mathbf n_1,\mathbf I_1)) \cap \Jac(\LLambda(\mathbf n_2,\mathbf I_2))=\Jac(\LLambda(\mathbf n_1\cap\mathbf n_2, \mathbf I_1\cap\mathbf I_2))$.
    \end{enumerate}
    In the first two equalities, the ideal matrix $\mathbf I_1\cap \mathbf I_2$ is obtained by taking the entry-wise intersections of $\mathbf I_1$ and $\mathbf I_2$.
    Considering $\mathbf n_1$ and $\mathbf n_2$ as subdivisions of a line segment of length $n$ into segments of integer length, we let $\mathbf n_1\cap \mathbf n_2$ denote the join of both subdivisions. The ideal matrix $\mathbf I_1\cap \mathbf I_2$ in the latter statements is obtained by first subdividing the ideal matrices $\mathbf I_1$ and $\mathbf I_2$ along $\mathbf n_1\cap \mathbf n_2$, and then taking the entry-wise intersection.
\end{lemma}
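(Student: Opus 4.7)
The plan is to prove (1)--(4) in sequence, each building on the previous. For (1), I would perform a direct block-wise check: a matrix lies in both $\LLambda(\mathbf n,\mathbf I_1)$ and $\LLambda(\mathbf n,\mathbf I_2)$ precisely when each $(i,j)$-block lies in $M_{n_i\times n_j}(I^{(1)}_{ij}\cap I^{(2)}_{ij})$, and one then verifies that $\mathbf I_1\cap \mathbf I_2$ inherits the standard-form conditions \cref{def:standard-form.i}--\cref{def:standard-form.iii} from either factor. This block-wise identification is purely set-theoretic and does not itself use standard form, a point I will reuse below. For (3), the strategy is to pass to the common refinement $\mathbf n \colonequals \mathbf n_1\cap \mathbf n_2$ and rewrite each $\LLambda(\mathbf n_s,\mathbf I_s) = \LLambda(\mathbf n,\tilde{\mathbf I}_s)$, where $\tilde{\mathbf I}_s$ repeats the entries of $\mathbf I_s$ across each refined block. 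The argument of (1) now yields $\LLambda(\mathbf n,\tilde{\mathbf I}_1\cap \tilde{\mathbf I}_2)$, which by construction is $\LLambda(\mathbf n_1\cap \mathbf n_2,\mathbf I_1\cap \mathbf I_2)$; note that $\tilde{\mathbf I}_s$ typically fails \cref{def:standard-form.iii}, but this does not affect the set-theoretic argument.

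For (2), I would rewrite both Jacobson radicals via \cref{II.4.i} as $\LLambda(\mathbf n,\mathbf I_s * \mathbf E_{\mathfrak m_\Omega})$, intersect entry-wise as in (1), and check that $(\mathbf I_1 * \mathbf E_{\mathfrak m_\Omega})\cap (\mathbf I_2 * \mathbf E_{\mathfrak m_\Omega}) = (\mathbf I_1\cap \mathbf I_2) * \mathbf E_{\mathfrak m_\Omega}$: on the diagonal both sides equal $\mm_\Omega$, while off the diagonal both equal $I^{(1)}_{ij}\cap I^{(2)}_{ij}$.

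The most delicate step is (4), which I expect to be the main obstacle: \cref{II.4.i} requires standard form, yet the refined matrices $\tilde{\mathbf I}_s$ from the proof of (3) typically violate \cref{def:standard-form.iii}. To sidestep this, I would apply \cref{II.4.i} in the \emph{original} partitions $\mathbf n_s$ and only then invoke (3) to intersect, so that the LHS becomes $\LLambda(\mathbf n,(\mathbf I_1 * \mathbf E_{\mathfrak m_\Omega})\cap (\mathbf I_2 * \mathbf E_{\mathfrak m_\Omega}))$. For the RHS, one first verifies that $\mathbf I_1\cap \mathbf I_2$ \emph{does} satisfy standard form in the partition $\mathbf n$: the key observation is that two distinct indices in $\mathbf n$ must lie in different blocks of at least one of $\mathbf n_1$ or $\mathbf n_2$, so that \cref{def:standard-form.iii} for the corresponding $\mathbf I_s$ forces $(\mathbf I_1\cap \mathbf I_2)_{ij}(\mathbf I_1\cap \mathbf I_2)_{ji}\subsetneq \Omega$. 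Then \cref{II.4.i} rewrites the RHS as $\LLambda(\mathbf n,(\mathbf I_1\cap \mathbf I_2) * \mathbf E_{\mathfrak m_\Omega})$, and the remaining equality reduces to an entry-wise comparison. The only nontrivial case arises when $i\ne j$ in $\mathbf n$ lie in the same $\mathbf n_1$-block but different $\mathbf n_2$-blocks: the LHS entry $\mm_\Omega\cap I^{(2)}_{i_2 j_2}$ collapses to $I^{(2)}_{i_2 j_2}$ because this ideal is proper in the local ring $\Omega$ (by \cref{def:standard-form.iii} for $\mathbf I_2$) and hence contained in $\mm_\Omega$, matching the RHS entry.
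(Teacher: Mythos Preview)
Your treatment of (1)--(3) is correct and spells out what the paper's one-line proof (``follows from \cref{def:standard-form} and \cref{II.4.i}'') leaves implicit. You are also right that (4) is the delicate part, and you correctly locate the nontrivial case: $i\ne j$ in $\mathbf n$ with $i_1=j_1$ but $i_2\ne j_2$.

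However, your resolution of that case contains a gap. You assert that $I^{(2)}_{i_2 j_2}$ is a proper ideal of $\Omega$ ``by \cref{def:standard-form.iii}'', but condition~(iii) only says that the \emph{product} $I^{(2)}_{i_2 j_2} I^{(2)}_{j_2 i_2}\subsetneq\Omega$ is proper; it does not force either factor to be. Indeed, $I^{(2)}_{i_2 j_2}=\Omega$ is permitted and actually occurs, e.g.\ in the lower-triangular entries of the extremal orders in \cref{extremal-orders}. When $I^{(2)}_{i_2 j_2}=\Omega$, the left-hand entry is $\mm_\Omega\cap\Omega=\mm_\Omega$ while the right-hand entry is $\Omega\cap\Omega=\Omega$, so your comparison fails.

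This gap cannot be repaired: statement (4) as written is false. Take $n=2$, $\mathbf n_1=(2)$, $\mathbf I_1=(\Omega)$, so $\LLambda(\mathbf n_1,\mathbf I_1)=M_2(\Omega)$ with $\Jac=M_2(\mm_\Omega)$; and $\mathbf n_2=(1,1)$, $\mathbf I_2=\begin{psmallmatrix}\Omega&\Omega\\\mm_\Omega&\Omega\end{psmallmatrix}$, which is in standard form. Then $\mathbf n_1\cap\mathbf n_2=\mathbf n_2$ and $\mathbf I_1\cap\mathbf I_2=\mathbf I_2$, so the right-hand side of (4) is $\Jac(\LLambda(\mathbf n_2,\mathbf I_2))=\begin{psmallmatrix}\mm_\Omega&\Omega\\\mm_\Omega&\mm_\Omega\end{psmallmatrix}$, whereas the left-hand side is $M_2(\mm_\Omega)\cap\begin{psmallmatrix}\mm_\Omega&\Omega\\\mm_\Omega&\mm_\Omega\end{psmallmatrix}=M_2(\mm_\Omega)$. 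What \emph{does} hold in general is the inclusion $\Jac(\LLambda(\mathbf n_1,\mathbf I_1))\cap\Jac(\LLambda(\mathbf n_2,\mathbf I_2))\subseteq\Jac(\LLambda(\mathbf n_1\cap\mathbf n_2,\mathbf I_1\cap\mathbf I_2))$, which your entry-wise analysis does establish, and this inclusion is all that the subsequent application to graduated hulls requires.
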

\begin{proof}
    All of these statements follow from \cref{def:standard-form} and \cref{II.4.i}.
\end{proof}
\begin{corollary}
    {Suppose that $R$, $\mathscr D$, and $\Omega$ are as in \cref{sec:standard-form}. Then every $R$-order $\LLambda\subset \mathscr A=M_n(\mathscr D)$ has a graduated hull.}
\end{corollary}
\begin{proof}
    We show that there is an extremal $R$-order $\widetilde\GGamma$ such that $\LLambda\preceq \widetilde\GGamma$. Let $\LLambda\equalscolon\GGamma_0\preceq\GGamma_1\preceq\ldots$ be a chain of orders radically covering each other. Let $\GGamma_\infty\colonequals\bigcup_{i=0}^\infty \GGamma_i$ be their directed union. Every element of $\GGamma_\infty$ is contained in some $\GGamma_i$ and hence integral over $R$ by \cite[Theorem~10.3]{MO}, and $\LLambda\subseteq \GGamma_\infty$ implies $L\GGamma_\infty=\mathscr A$, so $\GGamma_\infty$ is also an $R$-order by loc.cit. In particular, $\GGamma_\infty$ is noetherian, so the ascending chain of submodules $\GGamma_i$ stablilises. The existence of extremal orders follows.
    
    By \cref{extremal-orders}, $\widetilde\GGamma$ is graduated, and without loss of generality, we may assume it to be in standard form {(\cref{every-standard-form})}; note that then any graduated order $\LLambda\subseteq \GGamma\subseteq \widetilde\GGamma$ is also in standard form. 
    It follows from \cref{II.4.i} that if $\LLambda\preceq \GGamma,\GGamma'$ for some graduated orders $\GGamma,\GGamma'$ in standard form, then $\GGamma\cap\GGamma'$ is also a graduated order in standard form, and $\LLambda\preceq\GGamma\cap\GGamma'$ by \cref{lem:intersections}. The existence of graduated hulls follows.    
\end{proof}

\subsection{Self-dual orders}
In Plesken's generalisation of Jacobinski's conductor formula, the group ring is replaced by a self-dual order {(the definition will be given momentarily)}. We adapt this approach to completed group algebras.
{Let $R$ be a noetherian integrally closed integral domain, let $L=\Frac(R)$ be its field of fractions, and let $\mathscr A$ be a separable $L$-algebra.}

\begin{definition} \label{def:self-dual}
    Let $M\subset \mathscr A$ be a full left $\LLambda$-lattice. 
    Then $M$ is called \emph{reflexive} if the canonical map $M\to\Hom_R(\Hom_R(M,R),R)$ is an isomorphism, and \emph{self-dual} if $\D(M/R)=M$.
\end{definition}

\begin{remark}
    Plesken \cite[Definition~III.1]{Plesken} considers the following slightly diffent notion: for a unit $u\in\cent(\mathscr A)^\times$, let $\D_u(M/R)\colonequals \{a\in\mathscr A: \tr_{\mathscr A/L}(u a M) \subseteq R \}$. Then $\LLambda\subset \mathscr A$ is called \emph{$u$-self-dual} if there exists some $u\in\cent(\mathscr A)^\times$ such that $\D_u(\LLambda/R)=\LLambda$. 
    For $u=1$, one recovers $\D_1=\D$.
    Note that the same proof as in \cite[Proposition~III.6]{Plesken} shows the following: if $\LLambda\subset \mathscr A$ is a self-dual $R$-order and $\epsilon\in\LLambda$ is an idempotent, then $\epsilon\LLambda\epsilon\subset \epsilon \mathscr A\epsilon$ is also a self-dual order.
\end{remark}

\begin{lemma} \label{lem:double-duals}
    Let $\LLambda\subset \mathscr A$ be an $R$-order. Then the ordinary and reduced double duals agree:
    \[\D_{\ord}(\D_{\ord}(\LLambda/R)/R) = \D(\D(\LLambda/R)/R).\]
\end{lemma}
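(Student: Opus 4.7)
The plan is to relate the ordinary and reduced traces on $\mathscr A$ by multiplication by a central unit, and then conclude by formal manipulation. With $\mathscr A=\bigoplus_\chi\mathscr A_\chi$ and each $\mathscr A_\chi=M_{n_\chi}(\mathscr D_\chi)$ central simple over $\cent(\mathscr A_\chi)$ of dimension $d_\chi^2$, the standard comparison of the two traces on a central simple algebra yields $\Tr_{\mathscr A_\chi/L}=d_\chi\cdot\tr_{\mathscr A_\chi/L}$ on each component. Setting $d\colonequals\sum_\chi d_\chi\,e_\chi$, where $e_\chi\in\cent(\mathscr A)$ is the primitive central idempotent of $\mathscr A_\chi$, this globalises to $\Tr_{\mathscr A/L}(x)=\tr_{\mathscr A/L}(dx)$ for every $x\in\mathscr A$. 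Since the $d_\chi$ are positive integers and one works in characteristic zero, $d$ is in fact a unit of $\cent(\mathscr A)$.

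From this identity I would immediately deduce that for any full $R$-lattice $M\subset\mathscr A$ one has $\D_{\ord}(M/R)=d^{-1}\D(M/R)$: the condition $\Tr_{\mathscr A/L}(aM)\subseteq R$ rewrites as $\tr_{\mathscr A/L}((da)M)\subseteq R$, i.e.\ $da\in\D(M/R)$. Using centrality of $d$ one further verifies the scaling rule $\D(d^{-1}X/R)=d\cdot\D(X/R)$ for every full $R$-lattice $X\subset\mathscr A$. Applying the first formula twice (once to $\LLambda$ and once to $\D_{\ord}(\LLambda/R)$) and then invoking the scaling rule yields
\[\D_{\ord}(\D_{\ord}(\LLambda/R)/R)=d^{-1}\D\bigl(d^{-1}\D(\LLambda/R)/R\bigr)=d^{-1}\cdot d\cdot\D(\D(\LLambda/R)/R)=\D(\D(\LLambda/R)/R),\]
as claimed. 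The only substantive step is the central-unit comparison of the two traces; the remainder is bookkeeping exploiting the centrality of $d$.
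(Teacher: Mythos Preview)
Your argument is correct and follows essentially the same line as the paper's proof. Both hinge on the identity $\Tr_{\mathscr A/L}=d\cdot\tr_{\mathscr A/L}$ for a central unit $d$, the resulting relation $\D_{\ord}(M/R)=d^{-1}\D(M/R)$, and the cancellation of $d$ upon taking the double dual; the only cosmetic difference is that the paper first reduces to a single simple component (writing the scalar as $s_{\mathscr A}$) while you package the componentwise scalars into the central element $d=\sum_\chi d_\chi e_\chi$ and work globally.
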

\begin{proof}
    We may assume that $\mathscr A$ is simple, so $\mathscr A=M_n(\mathscr D)$ is of index $s_\mathscr A$. Then $\Tr_{\mathscr A/K}=s_{\mathscr A} \tr_{\mathscr A/K}$ by the properties of the reduced trace \cite[(7.35)]{CR}.
    It follows from the definitions that $\D_{\ord}(\LLambda/R)=\frac{1}{s_{\mathscr A}}\D(\LLambda/R)$. 
    By linearity of the ordinary trace map, $\D_{\ord}(aM/R)=a^{-1}\D_{\ord}(M/R)$ for any full left $R$-lattice $M\subset \mathscr A$ and $a\in K^\times$.
    So the ordinary double dual of $\LLambda$ is
    \[\D_{\ord}(\D_{\ord}(\LLambda/R)/R)=\D_{\ord}\left(\frac{1}{s_{\mathscr A}}\D(\LLambda/R) \middle/ R\right)=s_{\mathscr A}\D_{\ord}(\D(\LLambda/R)/R)=s_{\mathscr A} \cdot \frac{1}{s_{\mathscr A}}\D(\D(\LLambda/R)/R),\]
    as claimed.
\end{proof}

\begin{lemma} \label{lem:selfdual-reflexive}
    Let $\LLambda\subset \mathscr A$ be a self-dual $R$-order. Then $\LLambda$ is reflexive.
\end{lemma}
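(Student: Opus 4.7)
The plan is to exploit the non-degeneracy of the trace pairing on $\mathscr A$ to identify the $R$-linear dual of any full $R$-lattice in $\mathscr A$ with its ordinary inverse different, apply this identification twice, and then invoke \cref{lem:double-duals} together with self-duality to conclude.

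First I would observe that, since $\mathscr k/L$ is separable and $\mathscr A/\mathscr k$ is a separable algebra, $\mathscr A$ is separable over $L$, and in particular the ordinary trace form $(a,b)\mapsto \Tr_{\mathscr A/L}(ab)$ is a non-degenerate $L$-bilinear pairing on $\mathscr A$. Because $\LLambda$ is a full $R$-lattice, any $R$-linear map $\phi\colon \LLambda\to R$ extends uniquely to an $L$-linear map $\mathscr A\to L$, which by non-degeneracy is given by $b\mapsto \Tr_{\mathscr A/L}(a_\phi b)$ for a unique $a_\phi\in\mathscr A$. Unwinding the definition of $\D_{\ord}$, this produces a natural isomorphism of $R$-modules
\[\Hom_R(\LLambda,R)\xrightarrow{\sim} \D_{\ord}(\LLambda/R),\qquad \phi\mapsto a_\phi.\]

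Next I would apply the same identification to the full $R$-lattice $\D_{\ord}(\LLambda/R)\subset\mathscr A$, obtaining
\[\Hom_R\bigl(\Hom_R(\LLambda,R),R\bigr) \simeq \D_{\ord}\bigl(\D_{\ord}(\LLambda/R)/R\bigr).\]
A short diagram chase, using non-degeneracy of the trace form once more to identify elements of $\mathscr A$ from their trace pairings, shows that the canonical evaluation map $\LLambda\to \Hom_R(\Hom_R(\LLambda,R),R)$ corresponds under these identifications to the tautological inclusion $\LLambda\hookrightarrow \D_{\ord}(\D_{\ord}(\LLambda/R)/R)$.

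To finish, I would apply \cref{lem:double-duals} to rewrite $\D_{\ord}(\D_{\ord}(\LLambda/R)/R)=\D(\D(\LLambda/R)/R)$, and then use the hypothesis that $\LLambda$ is self-dual to conclude $\D(\D(\LLambda/R)/R)=\D(\LLambda/R)=\LLambda$. Hence the tautological inclusion $\LLambda\hookrightarrow\LLambda$ is the identity, and the canonical map is an isomorphism.

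The only non-mechanical step is the compatibility check that the canonical evaluation map matches the inclusion into the double inverse different; this is essentially forced by the adjoint property of the trace pairing, and is the point where \cref{lem:double-duals} (relating the reduced and ordinary double duals) enters in an essential way.
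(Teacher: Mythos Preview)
Your proposal is correct and follows essentially the same approach as the paper. The only difference is packaging: the paper writes the chain $\D_{\ord}(\D_{\ord}(\LLambda/R)/R)=\D(\D(\LLambda/R)/R)=\D(\LLambda/R)=\LLambda$ and then cites \cite[Proposition~2.2]{NickelConductor} for the identification of the ordinary trace double dual with the $\Hom_R$ double dual, whereas you spell out that identification directly via non-degeneracy of the trace pairing.
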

\begin{proof}
    Using \cref{lem:double-duals}, the ordinary double dual of $\Lambda$ is
    \[\D_{\ord}(\D_{\ord}(\LLambda/R)/R)=\D(\D(\LLambda/R)/R)= \D(\LLambda/R)=\LLambda.\]
    Then \cite[Proposition~2.2]{NickelConductor} shows that $\LLambda$ is reflexive.
\end{proof}

\begin{lemma} \label{lem:conductor-reflexive}
    {Suppose that $R$ is a regular local ring of dimension at most two.}
    Let $\LLambda\subset \mathscr A$ be a self-dual $R$-order, and let $\LLambda\subseteq \GGamma\subset \mathscr A$ be an $R$-overorder. Then the conductors $(\GGamma:\LLambda)_r$ and $(\GGamma:\LLambda)_\ell$ are reflexive $R$-modules.
\end{lemma}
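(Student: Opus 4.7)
The plan is to use the localization characterization of reflexivity: for a noetherian normal domain $R$, a finitely generated torsion-free $R$-module $M$ is reflexive if and only if $M = \bigcap_\pp M_\pp$, with $\pp$ ranging over the height-one primes of $R$ and the intersection taken inside $M\otimes_R L$. Since $1\in\GGamma$, the conductors $(\GGamma:\LLambda)_\ell$ and $(\GGamma:\LLambda)_r$ are contained in $\LLambda$, so they are finitely generated and torsion-free over $R$; they are moreover full $R$-lattices in $\mathscr A$ because finite generation of $\GGamma$ over $R$ provides an $r\in R\setminus\{0\}$ with $r\LLambda \subseteq (\GGamma:\LLambda)_\ell$.

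First, I would establish that localization commutes with the formation of conductors: $\left((\GGamma:\LLambda)_\ell\right)_\pp = (\GGamma_\pp:\LLambda_\pp)_\ell$, and similarly on the right. The inclusion $\subseteq$ is immediate. For $\supseteq$, given $y\in (\GGamma_\pp:\LLambda_\pp)_\ell$ and a finite $R$-generating set $\gamma_1,\dots,\gamma_n$ of $\GGamma$, I would clear denominators by choosing $s\in R\setminus\pp$ with $sy\gamma_i\in\LLambda$ for every $i$, which forces $sy\in(\GGamma:\LLambda)_\ell$ and hence $y\in \left((\GGamma:\LLambda)_\ell\right)_\pp$.

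Next, I would combine this with the reflexivity of $\LLambda$ supplied by \cref{lem:selfdual-reflexive}, which yields $\LLambda = \bigcap_\pp \LLambda_\pp$ inside $\mathscr A$. Then
\[\bigcap_\pp \left((\GGamma:\LLambda)_\ell\right)_\pp = \bigcap_\pp (\GGamma_\pp:\LLambda_\pp)_\ell = \left\{x\in \mathscr A : x\GGamma \subseteq \textstyle\bigcap_\pp \LLambda_\pp\right\} = (\GGamma:\LLambda)_\ell,\]
proving reflexivity of the left conductor. The right conductor is handled by a symmetric argument.

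The main technical input is the characterization of reflexivity as the intersection over localizations at height-one primes, which is standard for finitely generated torsion-free modules over noetherian normal domains; its equivalence with the double-dual formulation adopted in \cref{def:self-dual} can be cited from \cite[Proposition~2.2]{NickelConductor}. Note that this strategy sidesteps the identification of the conductor with $\D(\GGamma/R)$, which is the deeper content of \cref{thm:cf-graduated}.
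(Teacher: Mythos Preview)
Your proof is correct and follows essentially the same approach as the paper: both use the characterisation of reflexivity via the intersection over height-one localisations, reduce to $\bigcap_\pp \LLambda_\pp = \LLambda$, and invoke \cref{lem:selfdual-reflexive} for that equality. The only difference is organisational: you explicitly establish the equality $\left((\GGamma:\LLambda)_\ell\right)_\pp = (\GGamma_\pp:\LLambda_\pp)_\ell$, whereas the paper uses only the easy inclusion $\left((\GGamma:\LLambda)_r\right)_\pp \subseteq (\GGamma_\pp:\LLambda_\pp)_r$ and argues directly that any $a$ in the intersection satisfies $\GGamma a \subseteq \bigcap_\pp \LLambda_\pp = \LLambda$.
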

\begin{proof}
    The proof is similar to \cite[Proposition~2.7]{NickelConductor}. Consider the natural map $(\GGamma:\LLambda)_r\to \bigcap_\pp (\GGamma:\LLambda)_{r,\pp}$, where $\pp$ ranges over the prime ideals of $R$ of height $1$. This {natural map is injective: this is obvious when $R$ is a discrete valuation ring, and follows from \cite[\nopp 5.1.2 \& 5.1.8]{NSW} in the two-dimensional case}. Suppose that $a\in (\GGamma:\LLambda)_{r,\pp}$ for all such $\pp$. Then we have
    \[\GGamma a \subseteq \bigcap_\pp \GGamma_\pp a = \bigcap_\pp \LLambda_\pp = \LLambda,\]
    where the last equality is \cref{lem:selfdual-reflexive}. So $a\in(\GGamma:\LLambda)_r$, and thus the right conductor is reflexive. The proof is analogous in the left case.
\end{proof}

The following is an adaptation of \cite[Proposition~III.7]{Plesken} and a generalisation of \cite[Theorem~2.8]{NickelConductor}.
\begin{proposition} \label{selfdual-conductors}
    {Suppose that $R$ is a regular local ring of dimension at most two.}
    Let $\LLambda\subset \mathscr A$ be a self-dual $R$-order, and let $\LLambda\subseteq \GGamma\subset \mathscr A$ be an $R$-overorder of $\LLambda$. Then
    \[(\GGamma:\LLambda)_r = (\GGamma:\LLambda)_\ell = \D(\GGamma/R);\]
    in particular, the conductor is independent of $\LLambda$.
\end{proposition}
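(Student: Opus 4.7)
My plan is to follow the skeleton of Plesken's argument in \cite[Proposition~III.7]{Plesken} and its adaptation by Nickel in \cite[Theorem~2.8]{NickelConductor}, both of which rely only on formal properties of the trace–dual pairing. No feature of the base ring beyond noetherianness and normality enters, so the dimension of $R$ plays no role in the proof; everything is driven by the self-duality identity $\D(\LLambda/R)=\LLambda$ and the monotonicity $M\subseteq N\Rightarrow \D(N/R)\subseteq \D(M/R)$.

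First I would record two preliminary observations. Since $1\in\LLambda$, self-duality applied to $a\in\LLambda=\D(\LLambda/R)$ gives $\tr_{\mathscr A/L}(a)=\tr_{\mathscr A/L}(a\cdot 1)\in R$, so $\tr_{\mathscr A/L}(\LLambda)\subseteq R$. Moreover, from $\LLambda\subseteq\GGamma$ one gets $\D(\GGamma/R)\subseteq\D(\LLambda/R)=\LLambda\subseteq\GGamma$; this will ensure that elements of $\D(\GGamma/R)$ automatically lie in $\GGamma$, as required by the definition of the conductor.

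Next I would verify the two inclusions giving $(\GGamma:\LLambda)_\ell=\D(\GGamma/R)$. For ``$\subseteq$'', let $x\in(\GGamma:\LLambda)_\ell$, so $x\GGamma\subseteq\LLambda$; then $\tr_{\mathscr A/L}(x\GGamma)\subseteq\tr_{\mathscr A/L}(\LLambda)\subseteq R$, whence $x\in\D(\GGamma/R)$. For ``$\supseteq$'', let $x\in\D(\GGamma/R)$; the preliminary observation places $x$ in $\GGamma$, and for arbitrary $g\in\GGamma$ and $\lambda\in\LLambda$ we have $g\lambda\in\GGamma$, so $\tr_{\mathscr A/L}(xg\cdot\lambda)\in\tr_{\mathscr A/L}(x\GGamma)\subseteq R$; hence $xg\in\D(\LLambda/R)=\LLambda$, giving $x\GGamma\subseteq\LLambda$. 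The argument for $(\GGamma:\LLambda)_r$ is symmetric, replacing $g\lambda\in\GGamma$ by $\lambda g\in\GGamma$.

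Independence of $\LLambda$ is then immediate, since the right-hand side $\D(\GGamma/R)$ makes no reference to $\LLambda$. I do not foresee a genuine obstacle: the argument is entirely formal and uses neither principality of ideals in $\Omega$ nor any structural description of graduated orders, so the setup of \cref{sec:graduated-higher-dim} suffices without further hypotheses. The only point worth double-checking is that the reduced trace $\tr_{\mathscr A/L}=\tr_{\cent(\mathscr A)/L}\circ\tr_{\mathscr A/\cent(\mathscr A)}$ retains the usual properties (well-definedness, $R$-linearity, compatibility with the dual-lattice formalism) in the slightly broader setup where $\mathscr A$ is separable over a subfield $\mathscr k$ of $L$ rather than over $L$ itself, but this is standard.
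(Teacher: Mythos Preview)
Your proof is correct and takes a genuinely more elementary route than the paper's. The paper argues indirectly: it observes that $\D((\GGamma:\LLambda)_r/R)=\D(\LLambda/R)\,\GGamma=\LLambda\,\GGamma=\GGamma$, so the \emph{double} dual of the conductor is $\D(\GGamma/R)$; it then invokes three preparatory lemmas (\cref{lem:double-duals}, \cref{lem:selfdual-reflexive}, \cref{lem:conductor-reflexive} together with \cite[Proposition~2.2]{NickelConductor}) to conclude that the conductor is reflexive and hence equals its double dual. Your approach bypasses all of this reflexivity machinery by checking the two inclusions directly from $\D(\LLambda/R)=\LLambda$, $\tr(\LLambda)\subseteq R$, and monotonicity of $\D$. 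The only tacit ingredient in your symmetric treatment of $(\GGamma:\LLambda)_r$ is the cyclic identity $\tr_{\mathscr A/L}(ab)=\tr_{\mathscr A/L}(ba)$, which you use when passing from $\tr(gx\lambda)$ to $\tr(x\cdot\lambda g)$; this is standard for reduced traces, but it would be worth making explicit. The upshot: your argument is shorter and self-contained, while the paper's version is more structural (it isolates the reason---reflexivity of conductors---as a separate reusable lemma).
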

\begin{proof}
    The right conductor $(\GGamma:\LLambda)_r$ is the largest left $\GGamma$-lattice in $\LLambda$, hence its $\D$-dual is the smallest right $\GGamma$-lattice containing $\D(\LLambda/R)$. In other words,
    \[\D((\GGamma:\LLambda)_r / R) = \D(\LLambda/R)\GGamma = \LLambda \GGamma = \GGamma.\]
    Therefore the double dual of $(\GGamma:\LLambda)_r$ is
    $\D(\D((\GGamma:\LLambda)_r/R)/R)=\D(\GGamma/R).$ By \cref{lem:double-duals}, this agrees with the ordinary double dual, and by \cref{lem:conductor-reflexive} and \cite[Proposition~2.2]{NickelConductor}, the latter is $(\GGamma:\LLambda)_r$ itself. The proof is analogous for the left conductor.
\end{proof}

\begin{lemma} \label{different-direct-sum}
    {Let $R$ be a noetherian integrally closed integral domain, let $L=\Frac(R)$ be its field of fractions, and let $\mathscr A$ be a separable $L$-algebra.}
    Let $\LLambda\subset \mathscr A$ be a self-dual $R$-order, and let $\LLambda\subseteq \GGamma\subset \mathscr A$ be an $R$-overorder of $\LLambda$. 
    Let $\GGamma_\chi\colonequals \GGamma\cap\mathscr A_\chi$, and suppose that $\GGamma=\bigoplus_{\chi\in\mathscr X}\GGamma_\chi$.
    Then the conductor of $\GGamma$ into $\LLambda$ is
    \[\D(\GGamma/R) = \bigoplus_{\chi\in \mathscr X} \D_{\mathscr A_\chi / L}(\GGamma_\chi / R).\]
\end{lemma}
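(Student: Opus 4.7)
The plan is to reduce the statement to a direct component-wise computation, using the fact that the Wedderburn decomposition $\mathscr A = \bigoplus_{\chi\in\mathscr X} \mathscr A_\chi$ compatibly decomposes the reduced trace. Concretely, if $e_\chi \in \cent(\mathscr A)$ denotes the central idempotent cutting out the component $\mathscr A_\chi$, then for any $a = \sum_\chi a_\chi \in \mathscr A$ with $a_\chi = e_\chi a \in \mathscr A_\chi$, one has
\[
\tr_{\mathscr A/L}(a) = \sum_{\chi\in\mathscr X} \tr_{\mathscr A_\chi/L}(a_\chi);
\]
this is a standard property of the reduced trace, following from its definition via $\tr_{\mathscr A/\cent(\mathscr A)}$ composed with $\tr_{\cent(\mathscr A)/L}$ and the block decomposition $\cent(\mathscr A) = \bigoplus_\chi \cent(\mathscr A_\chi)$. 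The hypothesis $\GGamma = \bigoplus_\chi \GGamma_\chi$ ensures that every $e_\chi$ already lies in $\GGamma$, so this decomposition is compatible with the order structure.

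With this in hand, I would verify the two inclusions separately. For $\subseteq$, let $a = \sum_\chi a_\chi \in \D(\GGamma/R)$ and fix $\chi_0 \in \mathscr X$. For an arbitrary $\gamma_{\chi_0} \in \GGamma_{\chi_0}$, viewed as an element of $\GGamma$ via $\GGamma_{\chi_0} \hookrightarrow \GGamma$, one has $a \gamma_{\chi_0} = a_{\chi_0} \gamma_{\chi_0} \in \mathscr A_{\chi_0}$. Applying the decomposition of the reduced trace gives
\[
\tr_{\mathscr A_{\chi_0}/L}(a_{\chi_0} \gamma_{\chi_0}) = \tr_{\mathscr A/L}(a \gamma_{\chi_0}) \in R,
\]
so $a_{\chi_0} \in \D_{\mathscr A_{\chi_0}/L}(\GGamma_{\chi_0}/R)$.

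For $\supseteq$, let $a_\chi \in \D_{\mathscr A_\chi/L}(\GGamma_\chi/R)$ for each $\chi$, and set $a \colonequals \sum_\chi a_\chi$. For an arbitrary $\gamma = \sum_\chi \gamma_\chi \in \GGamma$, the product $a\gamma = \sum_\chi a_\chi \gamma_\chi$ lies in $\bigoplus_\chi \mathscr A_\chi$, and the trace decomposition yields
\[
\tr_{\mathscr A/L}(a\gamma) = \sum_{\chi\in\mathscr X} \tr_{\mathscr A_\chi/L}(a_\chi \gamma_\chi) \in R,
\]
so $a \in \D(\GGamma/R)$. No step here is genuinely hard; the only point requiring care is the compatibility of the reduced trace with the Wedderburn decomposition, which I would cite rather than reprove.
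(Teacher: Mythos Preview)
Your proposal is correct and follows essentially the same approach as the paper: both arguments use the decomposition $\tr_{\mathscr A/L}(a) = \sum_\chi \tr_{\mathscr A_\chi/L}(a_\chi)$ together with the hypothesis $\GGamma = \bigoplus_\chi \GGamma_\chi$ to check the two inclusions, with the paper merely packaging this as a chain of iff's rather than two separate containments.
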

\begin{proof}
    The proof is the same as the first half of \cite[Theorem~III.8]{Plesken}. 
    Let $a=\sum_{\chi\in \mathscr X}a_\chi\in\mathscr A$: then the following equivalences hold.
    \begin{align*}
        a\in\D(\GGamma/R) &\iff \forall y=\sum_{\chi\in X}y_\chi\in \GGamma: \sum_{\chi\in \mathscr X}\tr_{\mathscr A_\chi / L}(a_\chi y_\chi) \in R \\
        &\iff \forall\chi\in \mathscr X, \forall y_\chi\in \GGamma_\chi: \tr_{\mathscr A_\chi / L}(a_\chi y_\chi)\in R \\
        &\iff \forall\chi\in \mathscr X, a_\chi \in \D_{\mathscr A_\chi / L}(\GGamma_\chi/ R) \qedhere
    \end{align*}
\end{proof}

\begin{corollary} \label{general-conductor-formula}
    {Suppose that $R$ is a regular local ring of dimension at most two, let $\mathscr A$ be a separable $L=\Frac(R)$-algebra.}
    Let $\LLambda\subset \mathscr A$ be a self-dual $R$-order, and let $\LLambda\subseteq \GGamma\subset \mathscr A$ be a graduated $R$-overorder of $\LLambda$.
    Suppose that $\GGamma=\bigoplus_{\chi\in\mathscr X}\GGamma_\chi$.
    In the $\chi$-part, let $\mathbf E_{\chi}$ denote the $n_\chi\times n_\chi$ matrix consisting entirely of ones, and suppose $\GGamma_\chi\simeq \LLambda(\mathbf n_\chi, \mathbf I_\chi)$.
    Then the conductor of $\GGamma$ into $\LLambda$ is
    \[\D(\GGamma/R) \simeq \bigoplus_{\chi \in \mathscr X} \LLambda\left(\mathbf n_\chi, \D(\Omega_\chi/R)\mathbf E_\chi * \mathbf I_\chi^{-,\top}\right).\]
\end{corollary}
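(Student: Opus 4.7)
The corollary follows by chaining together three results that have already been established in this section, so the plan is essentially to verify that their hypotheses fit together and then invoke them in sequence.

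The first step is to apply \cref{selfdual-conductors} to reduce the conductor computation to a different computation. Since $\LLambda \subseteq \GGamma$ with $\LLambda$ self-dual, the proposition gives $(\GGamma:\LLambda)_r = (\GGamma:\LLambda)_\ell = \D(\GGamma/R)$. This eliminates $\LLambda$ from the picture entirely, which is where the independence of the conductor from the choice of self-dual order enters.

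The second step is to use the block decomposition $\GGamma = \bigoplus_{\chi\in\mathscr X} \GGamma_\chi$ together with \cref{different-direct-sum} to split the different:
\[\D(\GGamma/R) = \bigoplus_{\chi\in\mathscr X} \D_{\mathscr A_\chi/L}(\GGamma_\chi/R).\]
The hypotheses of that lemma are precisely those assumed in the corollary.

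The third and final step is to compute each summand $\D_{\mathscr A_\chi/L}(\GGamma_\chi/R)$ using \cref{different-graduated}. Since $\GGamma$ is a graduated $R$-order and each $\GGamma_\chi$ is a direct summand, each $\GGamma_\chi \subset \mathscr A_\chi = M_{n_\chi}(\mathscr D_\chi)$ is itself a graduated order; by \cref{every-standard-form} we may assume it to be in standard form $\LLambda(\mathbf n_\chi, \mathbf I_\chi)$ as given. Applying \cref{different-graduated} in each component then yields
\[\D_{\mathscr A_\chi/L}(\GGamma_\chi / R) \simeq \LLambda\!\left(\mathbf n_\chi, \D(\Omega_\chi/R)\mathbf E_\chi * \mathbf I_\chi^{-,\top}\right),\]
and assembling the pieces gives the claim.

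There is essentially no main obstacle: the content is in the three results being combined, and the only subtlety worth flagging is the invertibility hypothesis required by \cref{different-graduated}, which should be understood as implicit here (since the formula $\mathbf I_\chi^{-,\top}$ otherwise wouldn't make sense). All other steps are direct invocations of already proved statements.
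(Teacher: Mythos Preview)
Your proof is correct and follows exactly the paper's approach: the paper simply states that the corollary is a direct consequence of \cref{different-graduated}, \cref{selfdual-conductors}, and \cref{different-direct-sum}, and you have merely spelled out in detail how these three results chain together. Your remark on the implicit invertibility hypothesis for \cref{different-graduated} is a fair observation.
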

\begin{proof}
    {This is a direct} consequence of \cref{different-graduated,selfdual-conductors,different-direct-sum}.
\end{proof}

\section{Maximal orders in completed group rings} \label{sec:maximal}
\subsection{Wedderburn decomposition of \texorpdfstring{$\Q^F(\G)$}{Q\^F(G)}} \label{sec:Wedderburn} 

Let $p$ be an odd rational prime. Let $H$ be a finite group and $\Gamma\simeq \ZZ_p$ be isomorphic to the additive group of the $p$-adic integers. Let $\G=H\rtimes \Gamma$ be a semidirect product: in particular, $\G$ is a {one}-dimensional $p$-adic Lie group. Let $F/\QQ_p$ be a finite extension, and write $\Lambda^{\OO_F}(\G)\colonequals\OO_F\llb\G\rrb$ for the completed group algebra. Let $\Q^F(\G)\colonequals \Quot(\Lambda^{\OO_F}(\G))$ be its total ring of {quotients}. By work of Ritter--Weiss \cite[Proposition~5(1)]{TEIT-II}, this is a semisimple artinian ring. Fix an integer $n_0$ that is large enough such that $\Gamma_0\colonequals\Gamma^{p^{n_0}}$ is central in $\G$.

The ring $\Q^F(\G)$ has been studied by a number of authors: Ritter--Weiss \cite[Proposition~6(2)]{TEIT-II} provided a description of its centre when $F$ is large enough, and their result was generalised by Nickel \cite[\S1]{NickelConductor} to arbitrary $F$. In the case of $\G$ pro-$p$, Lau \cite{Lau} described the skew fields explicitly, and these results have been generalised to arbitrary $\G$ by the author.
In the rest of this subsection, we summarise the main results of \cite{W}.

Consider the Wedderburn decomposition of the completed group ring $\Q^F(\G)$: the Artin--Wedderburn theorem provides an {implicit} isomorphism 
\begin{equation} \label{eq:Wedderburn}
    \Q^F(\G)\simeq \bigoplus_{\chi\in \Irr(\G)/\sim_F} M_{n_\chi} (D_\chi){.}
\end{equation}
Here $\Irr(\G)$ is the set of characters of $\G$ with open kernel, and $\chi\sim_F\chi'$ if there exists some $\sigma\in\Gal(F_\chi/F)$ such that $\sigma(\res^\G_H\chi)=\chi'$, where $F_\chi\colonequals F(\chi(h):h\in H)$.
{In \cref{thm:Wedderburn} below, the skew fields $D_\chi$ occurring on the right hand side will be described explicitly in terms of the Wedderburn decomposition of the group algebra $F[H]$. Before stating this result, we need to make some preparations.}

{The group algebra $F[H]$ admits a decomposition of the form}
\[F[H]\simeq \bigoplus_{\eta\in\Irr(H)/\sim_F} M_{n_\eta}(D_\eta).\]
On the right hand side, the skew field $D_\eta$ has centre $F(\eta)\colonequals F(\eta(h):h\in H)$, and we write $\dim_{F(\eta)} D_\eta=s_\eta^2$ for the square of the Schur index. The skew fields $D_\eta$ are well understood: they are determined by their Hasse invariants \cite[\S14+31]{MO}. {More explicitly, $D_\eta$ can be identified with the cyclic algebra
\begin{equation} \label{eq:D-eta-cyclic}
    D_\eta=\left(W/F(\eta),\sigma,\pi\right) = \bigoplus_{i=0}^{s_\eta-1} W \pi^i,
\end{equation}
where $W$ is the unique unramified extension of $F(\eta)$ of degree $s_\eta$, $\sigma$ is a certain power of the Frobenius, $\pi$ is an $s_\eta$th root of a fixed uniformiser $\pi_\eta$ of $F(\eta)$, and conjugation by $\pi$ acts as $\sigma$ on $W$.
}

If $\G=H\times\Gamma$ is a direct product, then $\Lambda^{\OO_F}(\G)=\Lambda^{\OO_F}(\Gamma)[H]$, so we simply have $D_\chi=\Q^F(\Gamma)\otimes_F D_\eta$, where $\eta=\res^\G_H\chi$. In the case of an arbitrary semidirect product $\G=H\rtimes\Gamma$, the situation is significantly more complicated.

For $\chi\in \Irr(\G)$ and $\eta\in\Irr(H)$, there are the following classical idempotents over $H$:
\begin{align*}
	e(\eta) &\colonequals \frac{\eta(1)}{\#H} \sum_{h\in H} \eta(h^{-1}) h \in F(\eta)[H], &
	e_\chi &\colonequals \sum_{g\in\G/\G_\eta} e({}^g\eta) \in F_\chi[H].
\end{align*}
Here $\G_\eta$ denotes the stabliliser of $\eta$ in $\G$. For $\eta\mid\res^\G_H\chi$ an irreducible constituent, the index {$w_\chi\colonequals[\G:\G_\eta]$} depends only on $\chi$, {and it follows from the definitions that $F(\eta)\supseteq F_\chi$}. Summing over Galois orbits, these idempotents have the following counterparts defined over $F$:
\begin{align} \label{eq:epsilon-idempotents}
	\epsilon(\eta) &\colonequals \sum_{\sigma\in\Gal(F(\eta)/F)} e({}^\sigma\eta) \in F[H], &
	\epsilon_\chi &\colonequals \sum_{\sigma\in\Gal(F_{\chi}/F)} \sigma(e_\chi) \in F[H].
\end{align}
The primitive central idempotents of $\Q^F(\G)$ exactly those of the form $\epsilon_\chi$.

Let $\chi\in \Irr(\G)$, and let $\eta\mid\res^\G_H\chi$ be an irreducible constituent.
{In \cite[\S4.2]{W}, it is shown that $\Gal(F(\eta)/F_\chi)$ is cyclic, and a certain generator of it is described as follows. Let $v_\chi$ be the lowest integer such that conjugation by $\gamma^{v_\chi}$ acts as some automorphism $\tau\in\Gal(F(\eta)/F_\chi)$ on the character $\eta$; in formul\ae,
\[\tau(\eta(h))=\eta(\gamma^{v_\chi} h \gamma^{-v_\chi}) \quad \forall h\in H.\]
It is easily seen that $v_\chi\mid w_\chi$. It can be shown that there is exactly one such automorphism $\tau$, and that $\tau$ has order $w_\chi/v_\chi$. Furthermore, it is proven in \cite[\S2]{W} that $\tau$ admits a unique extension as an automorphism to $D_\eta$ with the same order, which we also denote by $\tau$.}

\begin{theorem}[{\cite[\S7]{W}}] \label{thm:Wedderburn}
The skew fields $D_\chi$ are described as follows.
\begin{theoremlist}
    \item The skew field $D_\chi$ has centre $\cent(D_\chi)=\Q^{F_\chi}(\Gamma''_\chi)$, where $\Gamma''_\chi\simeq \ZZ_p$ is a cyclic pro-$p$ group with topological generator $\gamma''_\chi$ (whose precise description we omit).
    \item The field $\Q^W(\Gamma''_\chi)$ is a maximal subfield of $D_\chi$.
    \item The skew field $D_\chi$ is explicitly given as the $\Q^W(\Gamma''_\chi)$-algebra
    \begin{align*}
        D_\chi = \bigoplus_{i=0}^{s_\eta {\frac{w_\chi}{v_\chi}}-1} \Q^W(\Gamma''_\chi) \cdot \left( \pi \cdot (\gamma''_\chi)^{{v_\chi/w_\chi}}\right)^i,
    \end{align*}
    where conjugation by $\pi \cdot (\gamma''_\chi)^{{v_\chi/w_\chi}}$ acts as $\sigma \tau$.
    \item {The maximal $\Lambda^{\OO_{F_\chi}}(\Gamma''_\chi)$-orders in $D_\chi$ are precisely the $D_\chi$-conjugates of}
    \[\Omega_\chi = \bigoplus_{i=0}^{s_\eta {\frac{w_\chi}{v_\chi}}-1} \Lambda^{\OO_W}(\Gamma''_\chi) \cdot \left( \pi \cdot (\gamma''_\chi)^{{v_\chi/w_\chi}}\right)^i,\]
    where $\OO_{D_\eta}$ is the unique maximal $\ZZ_p$-order in $D_\eta$.
\end{theoremlist}
\end{theorem}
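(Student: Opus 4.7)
The statement summarises the main results of \cite{W}; the plan is to follow the strategy of that paper. The overall approach would be to start from the classical Wedderburn decomposition of $F[H]$ and then systematically track how conjugation by $\Gamma$ twists this decomposition into a skew structure over $\Lambda^{\OO_{F_\chi}}(\Gamma''_\chi)$.

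First, I would reduce to a single Wedderburn component by fixing $\chi\in\Irr(\G)$ and an irreducible constituent $\eta\mid\res^\G_H\chi$, and studying $\Q^F(\G)\epsilon_\chi \simeq M_{n_\chi}(D_\chi)$. Writing $\epsilon_\chi=\sum_{\sigma\in\Gal(F_\chi/F)}\sigma(e_\chi)$ and $e_\chi=\sum_{g\in\G/\G_\eta} e({}^g\eta)$ decomposes this component according to the $\G$-orbit of $\eta$ and the Galois orbit under $\Gal(F_\chi/F)$, thereby reducing the problem to analysing how $\Gamma$ acts on a single factor $F[H]e(\eta)\simeq M_{n_\eta}(D_\eta)$. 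I would then identify the centre by determining which elements of $\Q^F(\G)\epsilon_\chi$ commute with everything. The key input is that a suitable power of $\gamma$, twisted by the action of $\tau$ and of $\Gal(F(\eta)/F_\chi)$, generates a pro-$p$ subgroup that is central on the skew-field level; this singles out the topological generator $\gamma''_\chi$ and proves (i).

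For (ii) and (iii), I would exhibit $\Q^W(\Gamma''_\chi)$ as a commutative subfield of $D_\chi$ of degree $s_\eta e$ over $\cent(D_\chi)$; matching this with the index of $D_\chi$, which can be read off from the Hasse invariants of $D_\eta$ together with the factor $e$ coming from $\tau$, forces the subfield to be maximal. The element $\pi\cdot(\gamma''_\chi)^{1/e}$ is defined as a formal $e$th root of an explicit unit times $\gamma''_\chi$, and one checks that conjugation by it realises $\sigma\tau$ on $\Q^W(\Gamma''_\chi)$; the resulting cyclic algebra has the correct dimension and invariants, and therefore agrees with $D_\chi$. For part (iv), uniqueness of the maximal $\Lambda^{\OO_{F_\chi}}(\Gamma''_\chi)$-order in $D_\chi$ follows by a local argument analogous to the DVR case (using that the base is a complete regular local ring of dimension two combined with the description of $D_\chi$ as totally ramified over its unramified lift), and the explicit formula for $\Omega_\chi$ is then obtained by taking the $\Lambda^{\OO_{D_\eta}}(\Gamma''_\chi)$-span of the powers of $(\gamma''_\chi)^{1/e}$ inside the cyclic algebra description of (iii).

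The main technical obstacle will be the rigorous construction of the formal $e$th root $(\gamma''_\chi)^{1/e}$ inside $D_\chi$ and verifying that all commutation relations between $\pi$, $(\gamma''_\chi)^{1/e}$, elements of $W$, and $\OO_{D_\eta}$ are consistent with the ramification data. Coordinating $\tau$, $\sigma$, the ramification index $e$, and the inertia degree $f$ across the tower displayed above --- particularly in the case where the stabiliser $\G_\eta$ is a proper subgroup of $\G$, so that $v_\chi<w_\chi$ and the element $\gamma''_\chi$ involves a genuinely twisted power of $\gamma$ --- is where the bulk of the bookkeeping lies.
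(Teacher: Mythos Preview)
The paper does not contain its own proof of this theorem: as stated explicitly in the text preceding it, the theorem \emph{summarises the main results of \cite{W}}, and no argument is given in the present paper beyond that citation. You correctly identify this in your opening sentence, and your sketch of the strategy from \cite{W} is a reasonable outline of what that reference does, so there is nothing further to compare.
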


\subsection{Comparing descriptions of the centre} \label{sec:conductor-formula}
Ritter--Weiss \cite[553-555]{TEIT-II} studied the $\chi$-part of the semisimple algebra $\Q^F(\G)$ by introducing an element $\gamma_\chi=c_\chi \gamma^{w_\chi}$ where $c_\chi\in (\QQ_p^\al[H]e_\chi)^\times$ such that $\gamma_\chi$ acts trivially on the vector space affording $\chi$. More precisely, they established an equality $\Q^F(\Gamma_\chi)= \cent(\Q^F(\G) e_\chi)$, where $\Gamma_\chi$ is the procyclic group topologically generated by $\gamma_\chi$, and where $F/\QQ_p$ is large enough such that $\chi$ has a realisation over it. 

Nickel \cite[Lemma~1.2]{NickelConductor} introduced a refinement $\gamma_\chi'\in\Q(\G) e_\chi$ of the Ritter--Weiss element $\gamma_\chi$. {Namely, Nickel constructed an element $\gamma'_\chi$, and showed that $\cent(\Q^F(\G) \epsilon_\chi)= \Q^{F_\chi}(\Gamma'_\chi)$, where $\Gamma_\chi'$ is the procyclic group topologically generated by $\gamma_\chi'$.
More explicitly, $\gamma'_\chi=x\gamma_\chi$, where $x\in U_E^1$ is a $1$-unit in some large enough finite extension $E/\QQ_p$. The element $\gamma'_\chi$ is characterised by the properties that $\gamma_\chi'$ is invariant under the $\Gal(E/F_\chi)$-action on $\cent(\Q^E(\G)e_\chi)$, and if $\gamma^{p^n}$ acts trivially on the representation affording $\chi$ for some $n\ge0$, then $x^{p^n}\in U_{F_\chi}^1$ is a $1$-unit in $F_\chi$.}

Nickel used this $\gamma'_\chi$ to study central conductors. In the next subsection, we will want to work with $\gamma''_\chi$ instead, which allows for more explicit methods through the use of \cref{thm:Wedderburn}. This essentially means working on the other side of the Wedderburn isomorphism \eqref{eq:Wedderburn}. We will show that as far as the central conductor is concerned, this makes no difference, by making compatible choices of embeddings on both sides. We remark that while the profinite groups $\Gamma'_\chi$ and $\Gamma''_\chi$ are abstractly isomorphic, a direct comparison of the elements $\gamma'_\chi$ and $\gamma''_\chi$ is rather difficult, due to a considerable difference {between} their definitions.

Let $\M$ be a maximal $\Lambda^{\OF}(\Gamma_0)$-order containing $\Lambda^{\OF}(\G)$. Fix an isomorphism $\phi_\chi:\Q^F(\G)\epsilon_\chi\to M_{n_\chi}(D_\chi)$. Then $\phi_\chi$ {maps} maximal orders to maximal orders. Since all maximal orders in $M_{n_\chi}(D_\chi)$ are conjugate to $M_{n_\chi}(\Omega_\chi)$ by \cref{lem:all-conjugates-maximal,thm:Wedderburn}, we may assume, by possibly post-composing $\phi_\chi$ with an inner automorphism of its target, that $\phi_\chi(\M\epsilon_\chi)=M_{n_\chi}(\Omega)$. On the centres, this induces $\phi_\chi(\cent(\M\epsilon_\chi))=\cent(M_{n_\chi}(\Omega))$. Then $\cent(\M\epsilon_\chi)\simeq \Lambda^{F_\chi}(\Gamma'_\chi)$ by \cite[Corollary~1.7]{NickelConductor}, and $\cent(M_{n_\chi}(\Omega))\simeq \Lambda^{F_\chi}(\Gamma''_\chi)$. So $\phi_\chi$ induces an isomorphism $\Lambda^{\OO_{F_\chi}}(\Gamma'_\chi) \simeq \Lambda^{\OO_{F_\chi}}(\Gamma''_\chi)$.

Recall that $\gamma_0=\gamma^{p^{n_0}}$ is a power of $\gamma$ such that the cyclic pro-$p$ group $\Gamma_0$ topologically generated by $\gamma_0$ is central in $\G$. Note that $\gamma_0\epsilon_\chi\in \Lambda^{\OO_{F_\chi}}(\Gamma'_\chi)$: indeed, we have $\gamma_\chi^{p^{n_0}/w_\chi}=\gamma_0 e_\chi$, and $x^{p^{n_0}}$ is a $1$-unit for $n_0\gg0$. This makes $\Lambda^{\OO_{F_\chi}}(\Gamma'_\chi)$ a ${\Lambda^{\OO_{F_\chi}}(\Gamma_0)}\epsilon_\chi$-algebra, and we also give $\Lambda^{\OO_{F_\chi}}(\Gamma''_\chi)$ a ${\Lambda^{\OO_{F_\chi}}(\Gamma_0)}\epsilon_\chi$-algebra structure through the isomorphism above.

{By the above, we have the following commutative diagram of embeddings and isomorphisms.}
\[\begin{tikzcd}[ampersand replacement=\&]
	{\Lambda^{\OO_{F_\chi}}(\Gamma_0)\epsilon_\chi} \& {\Lambda^{\OO_{F_\chi}}(\Gamma'_\chi)} \& {\cent(\mathfrak M\epsilon_\chi)} \& {\mathfrak M \epsilon_\chi} \& {\Q^F(\G)\epsilon_\chi} \\
	\& {\Lambda^{\OO_{F_\chi}}(\Gamma''_\chi)} \& {\cent\left(M_{n_\chi}(\Omega_\chi)\right)} \& {M_{n_\chi}(\Omega_\chi)} \& {M_{n_\chi}(D_\chi)}
	\arrow[hook, from=1-1, to=1-2]
	\arrow["\simeq", from=1-2, to=1-3]
	\arrow["\simeq", from=1-2, to=2-2]
	\arrow["\subseteq", hook, from=1-3, to=1-4]
	\arrow["\simeq", from=1-3, to=2-3]
	\arrow["\subset", from=1-4, to=1-5]
	\arrow["\simeq", from=1-4, to=2-4]
	\arrow["{\phi_\chi}"', "\simeq", from=1-5, to=2-5]
	\arrow["\simeq", from=2-2, to=2-3]
	\arrow["\subseteq", hook, from=2-3, to=2-4]
	\arrow["\subset", from=2-4, to=2-5]
\end{tikzcd}\]

In his formulation of the central conductor formula, Nickel defined
\[d'_\chi\colonequals \Q^{F_\chi}(\Gamma'_\chi) \cap \D\left(\M\epsilon_\chi / \Lambda^{\OO_{F_\chi}}(\Gamma'_\chi)\right);\]
this is a fractional ideal of $\Lambda^{\OO_{F_\chi}}(\Gamma'_\chi)$, independent of the choice of $\M$. The formula for the central conductor {\cite[Theorem~3.5]{NickelConductor} reads}
\[\F\left(\M / \Lambda^{\OO_F}(\G)\right) = \bigoplus_{\chi\in\Irr(\G)/\sim_F} \left(\frac{\#H w_\chi}{\chi(1)}\right) \D(\OO_{F_\chi}/\OO_F) d'_\chi.\]
Transferring this to the matrix ring side of the Wedderburn isomorphism, we define
\begin{equation} \label{eq:dpp-def}
    d''_\chi\colonequals \Q^{F_\chi}(\Gamma''_\chi) \cap \D\left(\phi_\chi(\M\epsilon_\chi) / \Lambda^{\OO_{F_\chi}}(\Gamma''_\chi)\right).
\end{equation}
We obtain the following reformulation of the central conductor formula:

\begin{noproof}{proposition} \label{central-conductor-formula-double}
    For a maximal $\Lambda^{\OF}(\Gamma_0)$-order $\M$ containing $\Lambda^{\OF}(\G)$, we have
    \[\F\left(\M / \Lambda^{\OO_F}(\G)\right) \simeq \bigoplus_{\chi\in\Irr(\G)/\sim_F} \left(\frac{\#H w_\chi}{\chi(1)}\right) \D(\OO_{F_\chi}/\OO_F) d''_\chi. \qedhere\]
\end{noproof}
{In view of \cref{central-conductor-independence}, the central conductor formula also holds for all graduated orders.}

\subsection{Explicit central conductor formula} \label{sec:central-conductor}
Let $\pi_\chi$ be a uniformiser of $\OO_{F_\chi}$, and let $\pp'_\chi\colonequals\pi_\chi\Lambda^{\OO_{F_\chi}}(\Gamma'_\chi)$. Nickel showed that $d'_\chi=(\pi'_\chi)^{r_\chi}$, where $r_\chi\le 0$. Setting $\pi''_\chi\colonequals\pi_\chi\Lambda^{\OO_{F_\chi}}(\Gamma''_\chi)$, we have that $d''_\chi=(\pp''_\chi)^{r_\chi}$ with the same exponent $r_\chi$.

{We now compute $r_\chi$. The next result is a generalisation of \cite[Theorem~3.9]{NickelConductor} that drops the restriction that $\G$ is a pro-$p$ group,} and indeed the proof follows that of loc.cit. The key ingredient is the explicit description of the Wedderburn decomposition of $\Q^F(\G)$ from \cref{thm:Wedderburn}.

\begin{proposition} \label{thm:rchi} The exponent of $\pp''_\chi$ in {$d''_\chi$} is
    \[r_\chi = - \left\lfloor \frac{ v_{W}(\DD(\OO_W / \OO_{F_{\chi}})) }{ e(F(\eta)/F_\chi) } \right\rfloor ,\]
    {where $e(F(\eta)/F_\chi)$ denotes the ramification index of $F(\eta)/F_\chi$.}
\end{proposition}
\begin{remark} \label{rem:comparison}
    \Cref{thm:rchi} specialises to Nickel's result \cite[Theorem~3.9]{NickelConductor} {in the case that $\G$} is a pro-$p$ group and $F=\QQ_p$. Indeed, in this case, Nickel showed that \[r_\chi=-\left\lfloor \frac{v_{\QQ_p(\eta)}(\DD(\OO_{\QQ_p(\eta)}/\OO_{\QQ_{p,\chi}}))}{s_\chi} \right\rfloor. \]
    Since $\G$ is a pro-$p$-group, the extension $\QQ_p(\eta)/\QQ_{p,\chi}$ is a {subextension} of $\QQ_p(\zeta_{p^m})/\QQ_p$ for some $m\ge0$. In particular, $\QQ_p(\eta)/\QQ_{p,\chi}$ is totally ramified in this case, {so $e(\QQ_p(\eta)/\QQ_{p,\chi})=w_\chi/v_\chi$}. 
    The field $W$ is the unique unramified extension of $\QQ_p(\eta)$ of degree $s_\eta$, and this Schur index is $1$ due to a theorem of Schilling, Witt and Roquette \cite[Theorem~74.15]{CRII}, so $W=\QQ_p(\eta)$. It is immediate that the numerators are equal, and the denominators agree by {\cite[Theorem~4.8(ii)]{W}, which states that $s_\chi=s_\eta w_\chi/v_\chi$}.

    We also easily recover the cases {when} $\G=H\times \Gamma$ and {when} $p\nmid s_\chi$, where Nickel showed $r_\chi=0$ in \cite[Theorem~3.5(i,ii)]{NickelConductor}. In both cases, $W/F_\chi$ is unramified, so the numerator is the valuation of the different of an unramified extension, which is zero. Indeed, if $\G$ is a direct product, then $F(\eta)=F_\chi$. If $p\nmid s_\chi$, then {$F(\eta)=F_\chi$ because $F(\eta)/F_\chi$ is a $p$-extension by \cite[Lemma~1.1]{NickelConductor} and $s_\chi=s_\eta [F(\eta):F_{\chi}]$.}
\end{remark}

\begin{proof}[{Proof of \cref{thm:rchi}}]
    {The main part of the proof is computing the image of the reduced trace map on $(\Omega_\chi)_{\pp''_\chi}$.}
    Let $x\in \Omega_\chi$; using the description in \cref{thm:Wedderburn}, we can write this as 
    \[x=\sum_{i=0}^{s_\eta {\frac{w_\chi}{v_\chi}}} x_i \left(\pi \cdot(\gamma''_\chi)^{{v_\chi/w_\chi}}\right)^i,\] 
    where $x_i\in\Lambda^{\OO_W}(\Gamma_\chi'')$. To compute the reduced trace of $x$, we recall that there is a splitting map $\Phi: \Q^W(\Gamma''_\chi)\otimes D_\chi \to M_{s_\eta {w_\chi/v_\chi}}\left(\Q^W(\Gamma''_\chi)\right)$, see \cite[proof of Corollary~3.3]{W}. This map is given by
    \[\Phi\left((\gamma''_\chi)^{{v_\chi/w_\chi}}\right) = \begin{psmallmatrix}
        & \mathbf 1_{s_\eta} \\ && \ddots \\ &&& \mathbf 1_{s_\eta} \\ \gamma''_\chi \mathbf 1_{s_\eta}
    \end{psmallmatrix}, \hspace{.3em} \Phi(\gamma''_\chi) = \gamma''_\chi \mathbf 1_{s_\eta {\frac{w_\chi}{v_\chi}}}, \hspace{.3em} \Phi(d) = \begin{psmallmatrix}
        \phi(d) \\ &\tau (\phi(d)) \\ &&\ddots \\ &&&\tau^{{\frac{w_\chi}{v_\chi}}-1}(\phi(d))
    \end{psmallmatrix},\]
    where {$d\in D_\eta$}. Here $\mathbf 1_n$ stands for the $n\times n$ identity matrix, {$\phi: D_\eta \hookrightarrow W\otimes D_\eta\xrightarrow{\sim} M_{s_\eta}(W)$} is Hasse's splitting map, and $\tau$ acts entry-wise on $M_{s_\eta}(W)$. The map $\phi$ is induced by
    \[\phi(\pi)=\begin{psmallmatrix}
        & 1 \\ &&\ddots \\ &&& 1 \\ \pi_\eta
    \end{psmallmatrix}, \quad \phi(w)=\begin{psmallmatrix}
        w \\ &\sigma(w) \\ &&\ddots \\ &&&\sigma^{s_\eta-1}(w)
    \end{psmallmatrix}\]
    for $w\in W$, {where notation is as in \eqref{eq:D-eta-cyclic}}. We are now ready to compute the reduced trace. To distinguish between traces associated with field extensions and traces of matrices, we will write $\Tr_{\mathrm{mat}}$ for the latter. {The reduced trace is}
    \begin{align*}
        \tr_{D_\chi / \cent(D_\chi)} (x) &= \sum_{i=0}^{{\frac{w_\chi}{v_\chi}}} \tr_{D_\chi/\cent(D_\chi)}\left(x_i \cdot \left(\pi \cdot(\gamma''_\chi)^{{v_\chi/w_\chi}}\right)^i\right) \\
        &= \sum_{i=0}^{{\frac{w_\chi}{v_\chi}}} \Tr_{\mathrm{mat}}\left(\Phi(x_i) \cdot \Phi(\pi)^i \cdot \Phi\left((\gamma''_\chi)^{{v_\chi/w_\chi}}\right)^i\right) = \Tr_{\mathrm{mat}}(\Phi(x_0)){.}
        \intertext{Indeed, the matrix $\Phi(x_i) \cdot \Phi(\pi)^i \cdot \Phi\left((\gamma''_\chi)^{{v_\chi/w_\chi}}\right)^i$ has zero blocks in the diagonal unless $i=0$: this is because the first two terms are block diagonal matrices, and because of the shape of the term $\Phi\left((\gamma''_\chi)^{{v_\chi/w_\chi}}\right)^i$. To proceed, let us write $x_0=\sum_{j=0}^\infty y_j (\gamma''_\chi)^j$ with $y_j\in \OO_W$. Unraveling the definition of $\Phi$, we obtain:}
        &= \sum_{j=0}^\infty \sum_{k=0}^{{\frac{w_\chi}{v_\chi}}-1} \sum_{\ell=0}^{s_\eta-1} \tau^k(\sigma^\ell(y_j)) \cdot (\gamma''_\chi)^j = \sum_{j=0}^\infty \Tr_{W/F_\chi}(y_j) \cdot (\gamma''_\chi)^j.
    \end{align*}
    In the last step, we used the fact that 
    \[{\Gal\left(W/F_\chi\right)\simeq \Gal\left(W/F(\eta)\right)\times \Gal\left(F(\eta)/F_\chi\right) = \left\langle\sigma\right\rangle \times \left\langle\tau\right\rangle.}\]

    For the reduced trace of the localisation of the maximal order, we obtain the following:
    \begin{align*}
        \tr_{D_\chi/\cent(D_\chi)}\left((\Omega_\chi)_{\pp''_\chi}\right) &= \Tr_{\Q^W(\Gamma''_\chi) / \Q^{F_\chi}(\Gamma''_\chi)} \left(\Lambda^{\OO_W}(\Gamma''_\chi)_{\pp''_\chi}\right) \\
        &= \Tr_{W/F_\chi}(\OO_W) \cdot \Lambda^{\OO_{F_\chi}}(\Gamma''_\chi)_{\pp''_\chi} \\
        &= \mm^{{t_\chi}}_{{F_\chi}} \cdot \Lambda^{\OO_{F_\chi}}(\Gamma''_\chi)_{\pp''_\chi}{.}
    \end{align*}
    Here ${t_\chi}\in\ZZ$ is the maximal exponent such that
    \begin{equation} \label{eq:t-formula}
        {\OO_W \subseteq \mm^{{t_\chi}}_{F_\chi} \DD(\OO_W / \OO_{F_\chi})^{-1} = \mm_W^{e{t_\chi}-v_W(\DD(\OO_W / \OO_{F_\chi}))}}
    \end{equation}
    holds, where $\mm$ denotes maximal ideals {in the respective local fields}, and $e=e(W/F_\chi)=e(F(\eta)/F_\chi)$ is the index of ramification.

    {We can now compute the exponent of $\pp_\chi''$ in $d_\chi''$. By the definition of $d''_\chi$, see \eqref{eq:dpp-def}, an element $x\in \Q^{F_\chi}(\Gamma''_\chi)$ is contained in $(d''_\chi)_{\pp_\chi''}$ if and only if 
    \[\tr_{D_\chi / \Q^{F_\chi} (\Gamma_\chi'') } \left(x \cdot (\Omega_\chi)_{\pp_\chi''}\right) \subseteq \Lambda^{\OO_{F_\chi}}(\Gamma_\chi'')_{\pp_\chi''}.\]
    Therefore $r_\chi=-{t_\chi}$, and the assertion now follows from \eqref{eq:t-formula}.}
\end{proof}

Using the central conductor formula, Nickel proved annihilation results for Ext-groups and Fitting ideals, see \cite[\S4]{NickelConductor}. Computing $r_\chi$ as in \cref{thm:rchi} also makes these results more explicit.

\section{Equivariant \texorpdfstring{$p$}{p}-adic Artin conjecture for graduated orders} \label{sec:epac}
The $p$-adic Artin conjectures are originally due to Greenberg \cite[pp.~82, 87]{Greenberg-padicArtinL}, and they were proven by Wiles \cite[Theorem~1.1]{Wiles} and Ritter--Weiss \cite[Remark~(G)]{TEIT-II}. The following equivariant analogue has been proposed by the author \cite{EpAC}. 

Let $p$ be an odd rational prime, $K$ a totally real number field, and $\mathcal L/K$ an admissible one-dimensional $p$-adic Lie extension: that is, $\mathcal L$ is totally real, contains the cyclotomic $\ZZ_p$-extension $K_\infty$ of $K$, and the degree $[\mathcal L:K_\infty]$ is finite.
Let $\G\colonequals\Gal(\mathcal L/K)$ be the Galois group; then $\G\simeq H\rtimes\Gamma$ where $H$ is a finite group and $\Gamma\simeq\ZZ_p$. Let $n_0\gg0$ be large enough such that $\Gamma_0\colonequals\Gamma^{p^n}$ is central in $\G$. Let $S$ be a set of places of $K$ containing all places ramifying in $\mathcal L/K$ and all infinite places, and let $T$ be a non-empty finite set of places of $K$ disjoint to $S$. Then there is a smoothed equivariant $p$-adic Artin $L$-function $\Phi_S^T$ attached to the extension $\mathcal L/K$; we refer to {\cite[\S2]{EpAC}} for its definition.
\begin{conjecture}[equivariant $p$-adic Artin conjecture] \label{integrality-Phi}
	Let $\mathfrak M$ be a $\Lambda(\Gamma_0)$-order in $\Q(\G)$ containing $\Lambda(\G)$. Then the smoothed equivariant $p$-adic Artin $L$-function $\Phi_S^T$ is in the image of the composite map
	\begin{equation} \label{nr-composite}
		\mathfrak M\cap \Q(\G)^\times \to K_1(\Q(\G)) \xrightarrow{\nr} \cent(\Q(\G))^\times,
	\end{equation}
	where the first arrow is the natural map sending an invertible element to the class of the $1\times 1$ matrix consisting of said element.
\end{conjecture}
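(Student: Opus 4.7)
The plan is to reduce the statement for graduated orders to the maximal-order case, which is treated under the EIMC in \cite{EpAC}. Let $\mathfrak M$ be a graduated $\Lambda(\Gamma_0)$-order in $\Q(\G)$ containing $\Lambda(\G)$, and let $\mathfrak M_{\max} \supseteq \mathfrak M$ be a maximal $\Lambda(\Gamma_0)$-overorder. Invoking the EIMC, the main result of \cite{EpAC} applied to $\mathfrak M_{\max}$ yields an element $y \in \mathfrak M_{\max} \cap \Q(\G)^\times$ whose image under the composite map to $\cent(\Q(\G))^\times$ equals $\Phi_S^T$. The task then becomes to exhibit an element $y' \in \mathfrak M \cap \Q(\G)^\times$ with the same image.

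Next, after identifying $\Q(\G)$ with its Wedderburn decomposition \eqref{eq:Wedderburn}, I would apply \cref{lem:all-conjugates-maximal} together with \cref{every-standard-form} to write $\mathfrak M_{\max} = \bigoplus_\chi M_{n_\chi}(\Omega_\chi)$ and $\mathfrak M = \bigoplus_\chi \LLambda(\mathbf n_\chi, \mathbf I_\chi)$ in standard form. The key observation is that each block $\LLambda(\mathbf n_\chi, \mathbf I_\chi)$ contains all diagonal matrices of the form $\mathrm{diag}(a_\chi, 1, \ldots, 1)$ with $a_\chi \in \Omega_\chi$, because $I_{ii} = \Omega_\chi$ by \cref{def:standard-form.ii}; the reduced norm of such a matrix is simply $\nr_{D_\chi/\cent(D_\chi)}(a_\chi)$. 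It therefore suffices to find, in each $\chi$-component, some $a_\chi \in \Omega_\chi \setminus \{0\}$ whose reduced norm equals the $\chi$-component $\Phi_\chi \in \cent(D_\chi)^\times$ of $\Phi_S^T$; the element $y' \colonequals \sum_\chi \mathrm{diag}(a_\chi, 1, \ldots, 1)$ then lies in $\mathfrak M \cap \Q(\G)^\times$ and has the required image.

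The main obstacle is therefore to establish the equality
\[\nr_{M_{n_\chi}(D_\chi)/\cent(D_\chi)}\bigl(M_{n_\chi}(\Omega_\chi) \cap M_{n_\chi}(D_\chi)^\times\bigr) = \nr_{D_\chi/\cent(D_\chi)}(\Omega_\chi \setminus \{0\})\]
for each $\chi$, where the inclusion $\supseteq$ is immediate from the diagonal embedding and the inclusion $\subseteq$ contains the content. Classically, for maximal orders in skew fields over local fields, the reverse inclusion follows from a Smith-normal-form decomposition combined with surjectivity of the reduced norm on units of the maximal order and on nonzero elements. In the two-dimensional Iwasawa-theoretic setting here, $\Omega_\chi$ is a maximal order over the regular local ring $\Lambda^{\OO_{F_\chi}}(\Gamma''_\chi)$, and the required surjectivity would be extracted from the explicit cyclic-algebra presentation of $\Omega_\chi$ in \cref{thm:Wedderburn}(iv), via norm computations in the spirit of those appearing in the proof of \cref{thm:rchi}. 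Since the required integrality of $\Phi_\chi$ in $\cent(\Omega_\chi)$ is automatic from the relation $\Phi_\chi = \nr(y_\chi)$ with $y_\chi \in M_{n_\chi}(\Omega_\chi)$, once this surjectivity is in hand the argument would be complete for every graduated $\mathfrak M$.
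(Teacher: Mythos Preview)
Your overall strategy---pass to standard form, then place $\diag(a_\chi,1,\ldots,1)$ in the graduated order using \cref{def:standard-form.ii}---is exactly the paper's. The difference lies in how you handle what you call the ``main obstacle''. The paper does not attempt any Smith-normal-form argument or explicit norm computation: it simply invokes \cref{2.13} (i.e.\ \cite[Proposition~6.2]{EpAC}), which states that for $\alpha\in M_m(\Omega_\chi)\cap\GL_m(D_\chi)$ the Dieudonn\'e determinant satisfies $\det(\alpha)\in\Omega_\chi\cap D_\chi^\times$. Taking $a_\chi\colonequals\det(y_\chi)$ then finishes immediately, since $\nr(a_\chi)=\nr(y_\chi)=\Phi_\chi$ and $\diag(a_\chi,1,\ldots,1)\in\LLambda(\mathbf n_\chi,\mathbf I_\chi)$. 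So the result you are proposing to extract from scratch is already available in the cited literature and is restated in the paper precisely for this purpose.

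Your proposed route to this integrality statement is where the real gap lies. A Smith-normal-form reduction requires $\Omega_\chi$ (or at least $M_{n_\chi}(\Omega_\chi)$) to be an elementary-divisor ring, which is not available here: $\Omega_\chi$ sits over the two-dimensional regular local ring $\Lambda^{\OO_{F_\chi}}(\Gamma''_\chi)$, and as the paper emphasises throughout \cref{sec:graduated-higher-dim} (cf.\ \cref{rem:invariants-of-grad-orders}, \cref{no-hereditary-orders}), the failure of ideals to be principal is precisely the obstruction that distinguishes this setting from the classical DVR case. Moreover, the computations in the proof of \cref{thm:rchi} concern reduced \emph{traces}, not reduced norms, so they do not obviously supply the surjectivity you need. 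In short: your outline is correct, but you should cite \cref{2.13} for the key step rather than sketch an ad-hoc argument that does not transfer from dimension one. (Minor point: you also pass silently over the case where $\mathfrak M\epsilon_\chi$ is only conjugate to a standard-form order; the paper handles this separately by the conjugation argument of \cite[Corollary~7.6]{EpAC}.)
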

Assuming the equivariant Iwasawa main conjecture {as stated in \cite[Conjecture~4.3]{NABS}}, the equivariant $p$-adic Artin conjecture has been proven for maximal orders, see {\cite[Theorem~7.4]{EpAC}}. 
Note that the main conjecture has been verified in multiple interesting cases, such as when the $\mu$-invariant vanishes \cite{RW-MC,Kakde}, or when $\G$ has an abelian $p$-Sylow subgroup \cite[Corollary 12.17]{UAEIMC}.
Using the main conjecture, the proof of \cref{integrality-Phi} for maximal orders is reduced to the following statement, which is an adaptation of \cite[Proposition~2.13]{NichiforPalvannan} to semidirect products $\G$:
\begin{proposition}[{\cite[Proposition~6.2]{EpAC}}] \label{2.13}
	Let $m\ge1$, $n\ge1$, $\chi\in\Irr(\G)$, and
	\[\alpha \in M_m\left( \Omega_\chi \right) \cap \GL_m \left( D_\chi \right).\]
	Then the Dieudonné determinant of $\alpha$ is integral: $\det(\alpha)\in \Omega_\chi\cap D_\chi^\times$. \qed
\end{proposition}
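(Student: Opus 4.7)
The plan is to exploit the explicit Wedderburn description in \cref{thm:Wedderburn} to reduce the statement to a local computation at the height-one primes of $R\colonequals\Lambda^{\OO_{F_\chi}}(\Gamma''_\chi)$. Being the unique maximal $R$-order in the skew field $D_\chi$, the ring $\Omega_\chi$ is reflexive as an $R$-module over the two-dimensional regular local ring $R$ (the reflexive hull of an order is an order containing the original, so by maximality they agree), so $\Omega_\chi = \bigcap_\mathfrak p (\Omega_\chi)_\mathfrak p$ as $\mathfrak p$ ranges over the height-one primes of $R$. It therefore suffices to produce, for each such $\mathfrak p$, a representative of the Dieudonn\'e class $\det(\alpha)\in D_\chi^\times/[D_\chi^\times,D_\chi^\times]$ in $(\Omega_\chi)_\mathfrak p \cap D_\chi^\times$, and to glue these into a single element of $\Omega_\chi \cap D_\chi^\times$.

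First I would carry out the local step. For a height-one prime $\mathfrak p$ of $R$, the localisation $R_\mathfrak p$ is a DVR and $(\Omega_\chi)_\mathfrak p$ is a maximal $R_\mathfrak p$-order in $D_\chi$; after completing if necessary, the classical structure theory of maximal orders over complete DVRs \cite[\S\S12--14]{MO} identifies this with a noncommutative DVR whose Jacobson radical is two-sided and principal, say $\pi_\mathfrak p (\Omega_\chi)_\mathfrak p$, with every nonzero element of the form $u \pi_\mathfrak p^n$ for a unit $u$ and an integer $n\ge 0$. Over such a noncommutative principal ideal ring, Gauss elimination yields a Smith normal form: there exist $U_\mathfrak p, V_\mathfrak p \in \GL_m((\Omega_\chi)_\mathfrak p)$ such that $U_\mathfrak p \alpha V_\mathfrak p = \diag(\pi_\mathfrak p^{n_1},\ldots,\pi_\mathfrak p^{n_m})$ with $n_i\ge 0$, the nonnegativity coming from $\alpha \in M_m(\Omega_\chi)$. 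Since the Dieudonn\'e determinant is trivial on elementary matrices and multiplicative on diagonal matrices modulo commutators, it follows that $\det(\alpha)$ admits a representative in $(\Omega_\chi)_\mathfrak p\cap D_\chi^\times$.

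The hard part will be the globalisation, because the local representatives above agree only modulo commutators of $D_\chi^\times$, with no \emph{a priori} compatibility across primes. My approach would be to compose with the reduced norm $\nr\colon D_\chi^\times \to \cent(D_\chi)^\times=\Q^{F_\chi}(\Gamma''_\chi)^\times$, which factors through the abelianisation and so converts each local representative into a genuine element of $R_\mathfrak p$; the Krull property $R=\bigcap_\mathfrak p R_\mathfrak p$ then yields $\nr(\det(\alpha))\in R\subseteq \Omega_\chi$. To lift this integrality from the reduced norm back to a Dieudonn\'e representative in $\Omega_\chi$ itself, I would use the cyclic algebra presentation of $D_\chi$ in \cref{thm:Wedderburn}(iii) to analyse $\SK_1(D_\chi)\colonequals\ker(\nr)$ via the cyclic maximal subfield $\Q^W(\Gamma''_\chi)/\cent(D_\chi)$, and thereby exhibit a single global element of $\Omega_\chi$ representing $\det(\alpha)$. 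This final coherence step is the technical heart of the argument, and it is where the semidirect-product setting requires genuinely new input compared with the simpler situation treated in \cite{NichiforPalvannan}.
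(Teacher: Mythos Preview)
The paper does not contain a proof of this proposition: it is stated with an immediate \qed and attributed to \cite[Proposition~6.2]{EpAC}, where the actual argument lives (itself an adaptation of \cite[Proposition~2.13]{NichiforPalvannan} to the semidirect-product setting). So there is nothing in the present paper to compare your proposal against; the authors simply import the result.

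As for the substance of your sketch: the local step is sound---at each height-one prime of $R=\Lambda^{\OO_{F_\chi}}(\Gamma''_\chi)$ you do get a noncommutative DVR and Smith normal form gives a local integral representative of the Dieudonn\'e class. The globalisation step, however, is not yet an argument. You correctly observe that passing to the reduced norm kills the commutator ambiguity and yields $\nr(\det(\alpha))\in R$ by reflexivity, but you then need an element $\omega\in\Omega_\chi\cap D_\chi^\times$ with $\omega\equiv\det(\alpha)$ in $D_\chi^\times/[D_\chi^\times,D_\chi^\times]$, not merely one with the right reduced norm. Your final paragraph gestures at analysing $\SK_1(D_\chi)$ via the cyclic presentation of \cref{thm:Wedderburn}, but stops short of doing so; this is precisely the nontrivial content, and without it the proposal is a plan rather than a proof. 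If you want to pursue this route, you would need either a vanishing statement for the relevant $\SK_1$ (which is delicate over power-series coefficients) or an explicit construction of $\omega$ from the cyclic-algebra data---and either of these is essentially what the cited reference has to supply.
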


\begin{corollary}
    Assume the equivariant Iwasawa main conjecture without uniqueness. Then \cref{integrality-Phi} holds for all graduated $\Lambda(\Gamma_0)$-orders $\mathfrak M$ in $\Q(\G)$ containing $\Lambda(\G)$.
\end{corollary}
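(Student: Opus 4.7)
The plan is to reduce the graduated case to the maximal-order case \cite[Theorem~7.4]{EpAC} via the structural results of \cref{sec:graduated}. Assuming the equivariant Iwasawa main conjecture, that theorem supplies, for any maximal $\Lambda(\Gamma_0)$-order $\mathfrak M' \supseteq \mathfrak M$ in $\Q(\G)$ containing $\Lambda(\G)$, an element $\alpha \in \mathfrak M' \cap \Q(\G)^\times$ with $\nr(\alpha) = \Phi_S^T$. By \cref{central-conductor-independence}, the graduated order $\mathfrak M$ splits along the Wedderburn decomposition of $\Q(\G)$ into simple components $\mathscr A_\chi = M_{n_\chi}(D_\chi)$, so $\mathfrak M = \bigoplus_\chi \mathfrak M_\chi$, and it therefore suffices to produce, component by component, elements $\tilde\alpha_\chi \in \mathfrak M_\chi \cap \GL_{n_\chi}(D_\chi)$ with $\nr(\tilde\alpha_\chi) = \nr(\alpha_\chi)$; assembling them will yield the desired $\tilde\alpha \in \mathfrak M \cap \Q(\G)^\times$ mapping to $\Phi_S^T$ under the composite map \eqref{nr-composite}.

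By \cref{every-standard-form}, after conjugating by some $u_\chi \in \mathscr A_\chi^\times$ I may assume that $\mathfrak M_\chi$ is in standard form $\LLambda(\mathbf n_\chi, \mathbf I_\chi)$ and that $\mathfrak M'_\chi = M_{n_\chi}(\Omega_\chi)$; since reduced norms are invariant under inner automorphisms, this replacement is cost-free. Then \cref{2.13} provides a representative $d_\chi \in \Omega_\chi \cap D_\chi^\times$ for the Dieudonné determinant of $\alpha_\chi$, and the standard factorisation of the reduced norm through the Dieudonné determinant yields $\nr(\alpha_\chi) = \nr_{D_\chi/\cent(D_\chi)}(d_\chi)$.

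I then set $\tilde\alpha_\chi \colonequals \diag(d_\chi, 1, \ldots, 1)$. Since \cref{def:standard-form} forces $I_{ii} = \Omega_\chi$ along the diagonal of $\mathbf I_\chi$, every diagonal matrix with entries in $\Omega_\chi$ already belongs to $\LLambda(\mathbf n_\chi, \mathbf I_\chi)$; hence $\tilde\alpha_\chi \in \mathfrak M_\chi$, it lies in $\GL_{n_\chi}(D_\chi)$ because $d_\chi \in D_\chi^\times$, and $\nr(\tilde\alpha_\chi) = \nr_{D_\chi/\cent(D_\chi)}(d_\chi) = \nr(\alpha_\chi)$. The only genuinely delicate step is the passage from an $n_\chi \times n_\chi$ matrix representative to a single-element representative of the same $K_1$-class over $\mathfrak M_\chi$, and this is precisely what \cref{2.13} delivers; once that is in hand, the definition of standard form makes the diagonal embedding into the graduated order entirely automatic.
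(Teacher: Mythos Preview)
Your proof is correct and follows essentially the same route as the paper's: both pass to Wedderburn components, bring the graduated component into standard form via \cref{every-standard-form}, invoke \cref{2.13} to obtain an integral representative $d_\chi\in\Omega_\chi\cap D_\chi^\times$ of the Dieudonn\'e determinant, and then observe that $\diag(d_\chi,1,\ldots,1)$ lies in any standard-form graduated order because the diagonal blocks are $\Omega_\chi$. The only cosmetic difference is that you invoke the maximal-order case of \cite[Theorem~7.4]{EpAC} as a black box and then transport the resulting element into the smaller graduated order, whereas the paper reruns the reduction from that proof directly for $\mathfrak M$; one small quibble is that \cref{central-conductor-independence} is stated for simple $\mathscr A$ and so does not literally give the Wedderburn splitting of $\mathfrak M$, but this follows at once from the fact that each indecomposable idempotent $\e_i$ lies in a single simple component, whence $\epsilon_\chi=\sum_{\e_i\in\mathscr A_\chi}\e_i\in\mathfrak M$.
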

\begin{proof}
    We may work Wedderburn componentwise. First suppose that the image of $\mathfrak M\epsilon_\chi$ under $\mathfrak M\epsilon_\chi\subset \Q(\G)\epsilon_\chi\simeq M_{n_\chi}(D_\chi)$ is in standard form. By the same argument as in the proof of \cite[Theorem~7.4]{EpAC}, the assertion reduces to showing that if $\alpha_{{\chi}}\in M_{m}(\mathfrak M \epsilon_\chi)\cap \GL_{mn_\chi}(D_\chi)$, then there is an $A_\chi\in \mathfrak M\epsilon_\chi\cap \GL_{n_\chi}(D_\chi)$ with $\det(\alpha_\chi)=\det(A_\chi)$. But $A_\chi\colonequals \diag(\det(\alpha_{{\chi}}),1,\ldots,1)$ clearly satisfies this condition by \cref{2.13}.
    {If the image of $\mathfrak M\epsilon_\chi$ is not in standard form, then by \cref{every-standard-form}, the image is $\mathfrak M\epsilon_\chi\simeq u_\chi \LLambda u_\chi^{-1}$} for some graduated $R$-order $\LLambda$ in standard form and $u_\chi\in\GL_{n_\chi}(D_\chi)$, {and} then the same argument as in \cite[Corollary~7.6]{EpAC} works.
\end{proof}

\printbibliography
\end{document}